\newcolumntype{L}{>{$}l<{$}}
\newtheorem{theorem}{Theorem}
\newtheorem{proposition}[theorem]{Proposition}
\newtheorem{lemma}[theorem]{Lemma}
\theoremstyle{definition}
\newtheorem{definition}[theorem]{Definition}
\newtheorem{problem}[theorem]{Problem}
\newtheorem{remark}[theorem]{Remark}
\newtheorem{conjecture}[theorem]{Conjecture}
\newtheorem{cor}[theorem]{Corollary}
\newenvironment{example}
{\pushQED{\qed}\examplex}
{\popQED\endexamplex}
\numberwithin{theorem}{section}
\newcommand{\QQ}{\mathbb{Q}}
\newcommand{\CC}{\mathbb{C} }
\newcommand{\NN}{\mathbb{N}}
\newcommand{\A}{\mathcal{A}}
\newcommand{\Shi}{\widehat{\mathcal{A}}_n}
\definecolor{darkgreen}{rgb}{0.0,0.1,0.6}
\newcommand{\df}[1]{{\bf\color{darkgreen} #1}}
\newcommand\csa{connected subgraph arrangement\xspace}
\newcommand\csas{connected subgraph arrangements\xspace}
\newcommand\MAT{$\operatorname{MAT}$\xspace}
\DeclareMathOperator{\rk}{rk}
\DeclareMathOperator{\codim}{codim}
\DeclareMathOperator{\Der}{Der}
\title[On arrangements of hyperplanes from connected subgraphs]
{On arrangements of hyperplanes\\ from connected subgraphs}
\author{Michael~Cuntz}
\address{Michael Cuntz, Leibniz Universit\"at Hannover,
	Institut f\"ur Algebra, Zah\-lentheorie und Diskrete Mathematik,
	Fakult\"at f\"ur Mathematik und Physik,
	Wel\-fengarten 1,
	D-30167 Hannover, Germany}
\email{cuntz@math.uni-hannover.de}
\author{Lukas~K\"uhne}
\address{Lukas K\"uhne, Fakult\"at f\"ur Mathematik, Universit\"at Bielefeld, Bielefeld, Germany}
\email{lkuehne@math.uni-bielefeld.de}
\keywords{arrangement of hyperplanes, freeness, resonance arrangement, simplicial, supersolvable}
\subjclass[2020]{14N20, 20F55, 52C35, 32S22}
\begin{document}
	
	\maketitle
	
	\begin{abstract}
We investigate arrangements of hyperplanes whose normal vectors are given by connected subgraphs of a fixed graph. These include the resonance arrangement and certain ideal subarrangements of Weyl arrangements. We characterize those which are free, simplicial, factored, or supersolvable. In particular, such an arrangement is free if and only if the graph is a cycle, a path, an almost path, or a path with a triangle attached to it.
	\end{abstract}

	\section{Introduction} \label{section1}
	
	Let $n\in\NN$, $F$ be a field, and $V:=F^n$. Denote by $e_1,\ldots,e_n$ the standard basis of $V$ and by $x_1,\ldots,x_n$ its dual basis in $V^*$.
	We now introduce the main protagonist of this article which to the best of our knowledge has not appeared in the literature yet.
	
	\begin{definition}\label{def:csa}
		Let $G=(N,E)$ be an undirected (simple) graph with vertex set $N=\{1,\dots,n\}$ and edges $E$.
		We define the \df{\csa} $\A_G$ in $F^n$ as
		\[
		\A_G(F) \coloneqq \{ H_I\mid \emptyset \neq I\subseteq N \quad \text{if } G\left[I\right] \text{ is connected}\},
		\]
		where $H_I$ is the hyperplane
		\[ H_I=\ker \sum_{i\in I}x_i \]
		and $G\left[I\right] $ is the induced subgraph on the vertices $I\subseteq N$.
		We define $\A_G\coloneqq \A_G(\QQ)$ for short.
	\end{definition}
	
	This definition includes a series of well-known arrangements:
	
	\begin{example}\label{typeA}
		If $F=\QQ$ and $G_n$ is the Dynkin diagram of type $A_n$, i.e.\ the path graph on $n$ vertices, then the arrangement $\A_{G_n}$ is the reflection arrangement of type $A_n$ also called \df{braid arrangement}.
		The change of coordinates $(\QQ^n)^*\to (\QQ^{n+1})^*$, $x_i\mapsto x_i -x_{i+1}$ maps the normal vectors of $\A_{G_n}$ to the braid normal vectors $x_i-x_j$ for $1\le i < j \le n+1$.
		
		In this case, the characteristic polynomial (see \ref{def:char_pol}) of $\A_{G_n}$ is $\chi(\A_{G_n},t)=\prod_{i=1}^n (t-i)$ and the arrangement has $n!$ many chambers.
	\end{example}
	
	\begin{example}
		If $F=\QQ$ and $K_n$ is a complete graph on $n$ vertices, then the arrangement $\A_{K_n}$ is the \df{resonance} or \df{all-subsets arrangement}, that is, it consists of the $2^n-1$ hyperplanes having all nonzero $0/1$-vectors in $\QQ^n$ as normal vectors.
		
		This arrangement is relevant for applications in mathematical physics and economy, see for instance~\cite{kuehne_resonance}.
		Computing its number of chambers or its characteristic polynomial is computationally difficult, both are only known up to $n\le 9$~\cite{BEK_resonance,CS_resonance}.
	\end{example}
	
	Hence, the \csas interpolate between these two families of arrangements depending on the underlying graph. But there are more examples:
	
	\begin{example}
		Let $F=\QQ$ and $G$ be the Coxeter diagram of a finite Weyl group (or more generally of a finite Weyl groupoid) $W$. Then it is known (see \cite[Prop.\ 3.2]{p-CH10}) that $\Gamma$ is a connected subgraph of $G$ if and only if the sum of the simple roots labeled by the vertices of $\Gamma$ is a positive root of $W$:
		\[ \Gamma \text{ connected subgraph } G
\quad \Longleftrightarrow \quad \sum_{i \text{ vertex of } \Gamma} x_i \quad \text{is a positive root of } W.\]
    Since the defining normals of $\A_G$ are $0/1$-vectors, it follows that $\A_G$ is an \df{ideal subarrangement} given by an ideal of the root poset of $W$. In particular in the case of a Weyl group, combinatorics and freeness of these arrangements have already been studied (\cite{ABCHT_mat}, \cite{CRS17}).\\
		Conversely, any \csa may be viewed as an ideal subarrangement when the Weyl group is allowed to be infinite. However, not much is known about the root poset structure of infinite root systems.
	\end{example}

	In this article we want to investigate \csas from different perspectives:
	\begin{enumerate}
	    \item \df{Free arrangements} (see Def.~\ref{def:free}): An arrangement is free if a certain module of logarithmic derivations is free. The study of free arrangements was pioneered by Terao~\cite{Terao_addition} and is by now one of the most studied properties of arrangements~\cite{Yos14}.
	    \item \df{Simplicial arrangements} (see Def.~\ref{def:simp}): Simplicial arrangements are arrangements that cut the space into simplicial cones. For instance a real reflection arrangement is simplicial since all chambers are simplicial cones. It is known that the complexified complement of a simplicial arrangement is a $K(\pi,1)$-space \cite{Del72}. In rank three, there is a catalogue of known simplicial arrangements \cite{p-G-09}, extended later by further arrangements in \cite{Cun12} and \cite{Cun22}; it is an open question whether this catalogue is complete.
	    \item \df{Factored and supersolvable arrangements} (see Def.~\ref{def:factored}, Def.~\ref{def:ss}): Factored (also known as nice partition) arrangememts and supersolvable arrangements are combinatorially defined and motivated by a decomposition of the Orlik-Solomon algebra~\cite{Ter92} and the subgroup lattice of a finite supersolvable group~\cite{Sta72}, respectively.
	    Both notions have been thoroughly researched over the times, for some of the latest contributions see for instance~\cite{CM19,AD20,MMR22}.
	    \item \df{Aspherical or $K(\pi,1)$-arrangements}: A complex arrangement $\A$ is aspherical or $K(\pi,1)$ if the universal covering space of the complement of $\A$ is contractible with a fundamental group isomorphic to $\pi$.
	    Deligne proved that complexified simplicial arrangements are $K(\pi,1)$~\cite{Del72}.
	\end{enumerate}
\medskip
The following four families of graphs will appear in the characterization of the above types within the class of \csas.
	\begin{enumerate}
		\item \df{Path graph} $P_n$.
		As discussed above in Example~\ref{typeA} the \csa is the braid arrangement in this case.
		\item \df{Cycle graph} $C_n$.
		In this case the the characteristic polynomial of the \csa $\A_{C_n}$ is
		\[
		\chi(\A_{C_n},t)=(t-1)(t-n)^{n-1}.
		\]
		In Section~\ref{sec:cycle} we show that $\A_{C_n}$ is equal to the cone of a \df{Shi arrangement} after a change of coordinates~\cite{Shi}.
		Following work of Athanasiadis~\cite{Athanasiadis}, the latter is known to be inductively free with exponents equal to the roots of $\chi(\A_{C_n},t)$.
		\item \df{Almost path graph} $A_{n,k}$ for $1<k<n$: This is a graph on the vertices $\left[n+1\right]$ consisting of a path between the vertices $1$ to $n$ and an additional edge between the vertices $k$ and $n+1$.
		In Section~\ref{sec:almost_path} we show that $\A_{A_{n,k}}$ is \MAT-free with exponents
		$$(1,k+1,\ldots,n+1,n-k+2,\ldots,n).$$
		Its characteristic polynomial therefore has only these integral roots.
		\item \df{Path-with-triangle-graph} $\Delta_{n,k}$ for $1<k<n$: This is also a graph on the vertices $\left[n+1\right]$ consisting of a path between the vertices $1$ to $n$ and two additional edges:  one between the vertices $k$ and $n+1$ and one between the vertices $k+1$ and $n+1$.
		In~Section~\ref{sec:triangle_path} we show that 	$\A_{\Delta_{n,k}}$ is free.
	\end{enumerate}

\begin{figure}[ht]
\centering
\includegraphics[width=.4\linewidth]{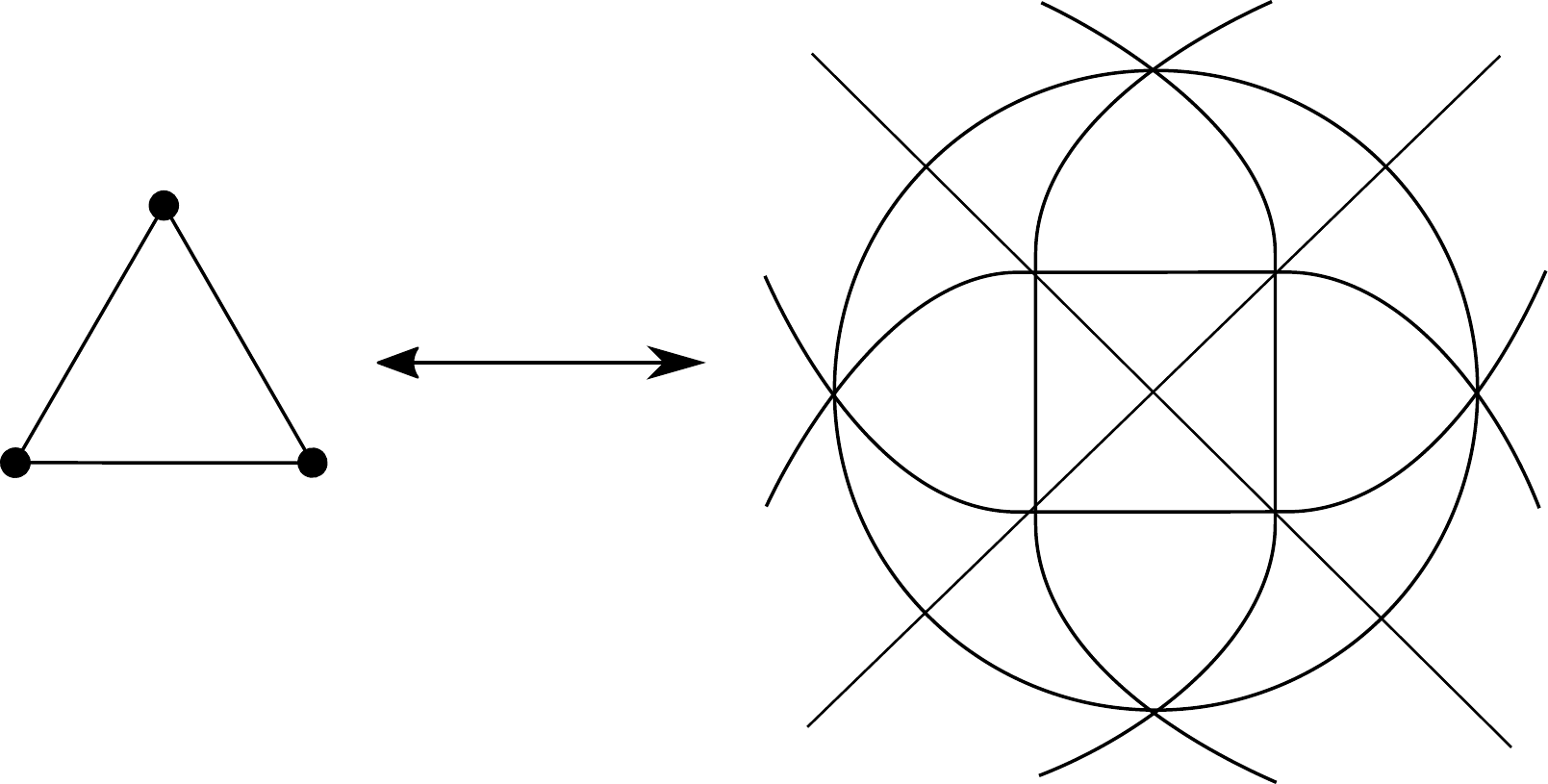}
\caption{The \csa $\A_{C_3}=\A_{\Delta_{2,1}}$.\label{fig:AC3}}
\end{figure}
	\begin{example}
If $n=3$, $F=\QQ$ and
\[ R := \{(1,0,0),(0,1,0),(0,0,1),(1,1,0),(0,1,1),(1,0,1),(1,1,1)\}\subset (\QQ^3)^*. \]
Then $\A_{C_3}=\A_{\Delta_{2,1}}=\{\ker \alpha\mid \alpha \in R\}$ is (up to linear transformations) the simplicial arrangement with $7$ hyperplanes in $\QQ^3$ (see Figure \ref{fig:AC3}).
	\end{example}

	Our main theorem classifies the free \csas over $\QQ$:
	
	\begin{theorem}[Prop.~\ref{prop:cycle}, Thm.~\ref{thm:mat_almost_path}, Thm.~\ref{thm:path_triangle}, Thm.~\ref{thm:classification}]\label{thm:main}
		Let $G$ be a connected graph.
		The \csa $\A_G$ is free if and only if $G$ is a member of one of the four families described above, that is a path, a cycle, an almost path, or a path-with-triangle graph.
	\end{theorem}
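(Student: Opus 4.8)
The \emph{if} direction of the theorem is already in hand: combine Example~\ref{typeA} (the path, giving the braid arrangement, free with exponents $1,\dots,n$), Proposition~\ref{prop:cycle} (cycles), Theorem~\ref{thm:mat_almost_path} (almost paths, in fact \MAT-free), and Theorem~\ref{thm:path_triangle} (path-with-triangle graphs). So the entire content of the theorem is the \emph{only if} direction, and the plan is to prove its contrapositive: if $G$ is connected but not a path, cycle, almost path, or path-with-triangle graph, then $\A_G$ is not free.

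The first ingredient is a hereditarity lemma. For $I\subseteq N$ consider the flat $X_I:=\bigcap_{j\in N\setminus I}H_{\{j\}}$; a hyperplane $H_J$ contains $X_I$ exactly when $J\subseteq I$, so the localization $(\A_G)_{X_I}$ equals $\A_{G[I]}$ up to a factor which is an empty arrangement. Since a localization of a free arrangement is free (see~\cite{Yos14}), freeness of $\A_G$ forces freeness of $\A_{G[I]}$ for every $I\subseteq N$. Moreover, if $G[I]$ is disconnected with components $G_1,\dots,G_r$, then $\A_{G[I]}\cong\A_{G_1}\times\cdots\times\A_{G_r}$, which is free if and only if every factor is. Hence it suffices to exhibit, for each connected $G$ outside the four families, one induced subgraph $H$ of $G$ with $\A_H$ not free.

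Next comes a purely graph-theoretic step: a connected graph containing none of a fixed finite list together with a few explicit infinite families of ``obstruction'' graphs as an induced subgraph must be a path, a cycle, an almost path, or a path-with-triangle graph. I would split on whether $G$ is a tree. If $G$ is a tree, then a vertex of degree $\ge 4$ yields an induced $K_{1,4}$; two vertices of degree $\ge3$ yield an induced ``double broom'' (two claws joined by a path); and a single degree-$3$ vertex all of whose three legs have length $\ge2$ yields an induced subdivided claw $S(2,2,2)$. Excluding these leaves exactly the paths and the spiders with a leg of length $1$, i.e.\ the almost paths $A_{n,k}$. If $G$ has a cycle, take a shortest one $C$: a cycle of length $\ge4$ carrying any pendant is an obstruction, so then $G=C_n$; if $C$ is a triangle, then excluding $K_4-e$ forces every outside vertex to meet $C$ in at most one vertex, and further excluding a triangle with three pendants, a triangle with two pendants at a single vertex, a branched tail, a bowtie, two cycles joined by a path, and a chorded cycle leaves precisely the graphs $\Delta_{n,k}$.

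It then remains to prove that each listed obstruction $H$ has $\A_H$ non-free, and this is where the main difficulty lies, because several obstruction families are \emph{infinite} (double brooms, a cycle with a pendant path, two triangles joined by a path — each with an arbitrarily long inserted path), and, crucially, induced-subgraph reduction never shortens the inserted path, so these cannot be reduced to the sporadic cases. For the finitely many small obstructions ($K_{1,4}$, $S(2,2,2)$, $K_4-e$, the bowtie, the triangles with extra pendants, and so on) I would compute the characteristic polynomial by the finite-field method or by deletion--restriction and observe that it fails to split into nonnegative integral linear factors, so $\A_H$ is not free by Terao's Factorization Theorem (see~\cite{Yos14}). For each infinite family I would instead set up a deletion--restriction (transfer-matrix) recursion expressing $\chi(\A_{H_\ell},t)$ in terms of the data of the two end-gadgets and of the path length $\ell$, and show that the resulting polynomial never factors into nonnegative integral linear factors: for instance by controlling only the first few coefficients of $\chi(\A_{H_\ell},t)$ and invoking Newton's inequalities, or by exhibiting a non-real root that persists along the recursion. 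Combining the hereditarity lemma, these non-freeness verifications, and the graph-theoretic classification then yields the theorem.
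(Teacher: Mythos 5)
Your reduction to the ``only if'' direction and the use of localizations to make freeness hereditary under passing to induced subgraphs is exactly the paper's starting point (Lemma~\ref{lem:induced_subgraph}), up to a small indexing slip: with your flat $X_I=\bigcap_{j\in N\setminus I}H_{\{j\}}$ one has $H_J\supseteq X_I$ iff $J\cap I=\emptyset$, so this localization is $\A_{G[N\setminus I]}$; to get $\A_{G[I]}$ you must localize at $\bigcap_{j\in I}H_{\{j\}}$, as in the paper. That is harmless. The genuine gap is in the core of your plan: because induced subgraphs never shorten paths, you are forced to handle \emph{infinite} families of obstruction graphs (double brooms, a cycle with a pendant path, two triangles joined by a long path, etc.), and for these you only sketch a program --- transfer-matrix recursions for $\chi(\A_{H_\ell},t)$, Newton-type coefficient estimates, or a ``persistent non-real root'' --- without carrying out any of it. This is precisely the hard part your approach creates, and nothing in the proposal establishes non-freeness for even one of these infinite families; note also that non-integrality of the roots of $\chi$ is only a sufficient criterion, so the program would have to be verified family by family.

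The missing idea that makes the argument finite is edge contraction: the paper shows (Lemma~\ref{lem:edge_contraction}) that $\A_{G/e}$ is, up to a linear isomorphism, the localization of $\A_G$ at the flat $X_e=\bigcap_{|B\cap\{i,j\}|\neq 1}H_B$, so freeness also passes to contractions. Combining induced subgraphs with contractions collapses every long path or long cycle inside an obstruction configuration, so one only needs the finitely many graphs $G_1,\dots,G_8$ of Proposition~\ref{prop:8_graphs}, whose characteristic polynomials are computed explicitly and have irreducible non-linear factors; non-freeness then follows from Terao's Factorization Theorem and Proposition~\ref{prop:small_graphs}, and the case analysis of Theorem~\ref{thm:classification} (degrees, number and length of cycles, trees) closes the classification. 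To repair your proof, either import this contraction lemma --- after which your obstruction list reduces to a finite check --- or actually supply the non-freeness arguments for each infinite family, which your proposal currently does not do.
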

	For a disconnected graph the above theorem can be applied to its connected components as this corresponds to direct products for arrangements.
	Moreover, we can classify the simplicial, factored, and supersolvable \csas.
	\begin{theorem}[Thm.~\ref{thm:ss_p}]\label{thm:sim}
		Let $G$ be a connected graph.
        The \csa $\A_G$ is simplicial if and only if $G$ is a triangle or a path graph.
	\end{theorem}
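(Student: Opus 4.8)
The plan is to prove the stated equivalence in two steps, the forward direction being the substantial one.

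For the ``if'' direction: if $G=P_n$ then $\A_G$ is the braid arrangement of type $A_n$ (Example~\ref{typeA}), hence a real reflection arrangement, and every reflection arrangement is simplicial; if $G=C_3$ then $\A_G$ is, up to a linear change of coordinates, the rank-three arrangement of seven hyperplanes from the example following Figure~\ref{fig:AC3}, which is one of the classical rank-three simplicial arrangements (alternatively one checks directly that on the $2$-sphere it has $18$ vertices, $48$ edges and $32$ two-dimensional faces, so that $2\cdot 48 = 3\cdot 32$, together with the absence of digons in a simple essential rank-three arrangement, forces every chamber to be a triangle).

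For the ``only if'' direction the engine is the elementary fact that \emph{restriction preserves simpliciality}: if $\A$ is a central simplicial arrangement and $H\in\A$, then every chamber of $\A^H$ is a facet of a (simplicial) chamber of $\A$, hence itself a simplicial cone; iterating, the restriction of a simplicial arrangement to any flat is simplicial. Since $\A_G$ always contains all coordinate hyperplanes $H_{\{i\}}$, I may restrict $\A_G$ to an intersection of coordinate hyperplanes, and the result has hyperplanes with normals $\sum_{i\in J}x_i$ where $J$ runs over the subsets of the surviving coordinates of the form $I\cap(\text{surviving coords})$ with $G[I]$ connected. Assume now $\A_G$ is simplicial, with $G$ connected. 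If $G$ had a vertex $v$ of degree $\ge 3$ with three distinct neighbours $a,b,c$, I restrict to $X=\bigcap_{w\notin\{v,a,b,c\}}H_{\{w\}}$: a short check shows the resulting rank-four arrangement contains all eleven hyperplanes of the claw arrangement $\A_{K_{1,3}}$ with centre $v$ and leaves $a,b,c$, its only possible further hyperplanes having normals among $x_a+x_b,\ x_a+x_c,\ x_b+x_c,\ x_a+x_b+x_c$. But in $\A_{K_{1,3}}$ the point with coordinates $(x_v,x_a,x_b,x_c)=(-3,2,2,2)$ lies in a chamber whose only non-redundant walls are the six hyperplanes $\ker(x_v+x_i)$ and $\ker(x_v+x_i+x_j)$ for $\{i,j\}\subseteq\{a,b,c\}$; each of the possible extra normals is strictly positive on that chamber, so it survives as a six-walled chamber of the rank-four restriction, contradicting simpliciality. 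Hence $G$ has maximum degree $\le 2$, i.e.\ $G$ is a path or a cycle $C_m$.

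It remains to exclude $C_m$ for $m\ge 4$. Since the connected induced subgraphs of $C_m$ are exactly its arcs, restricting $\A_{C_m}$ to $\{x_5=\dots=x_m=0\}$ leaves precisely the subsets of $\{1,2,3,4\}$ other than $\{1,3\}$ and $\{2,4\}$, that is, exactly the connected subsets of $C_4$; so this restriction is exactly $\A_{C_4}$ (vacuously when $m=4$). Finally $\A_{C_4}$ is not simplicial: the point $(x_1,x_2,x_3,x_4)=(-3,2,-3,2)$ lies in a rank-four chamber with the six walls $\ker(x_1+x_2),\ \ker(x_2+x_3),\ \ker(x_3+x_4),\ \ker(x_4+x_1),\ \ker(x_1+x_2+x_4),\ \ker(x_2+x_3+x_4)$. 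This last contradiction leaves exactly the paths and $C_3$. I expect the only real care to be needed in the bookkeeping that the coordinate restrictions add no hyperplanes beyond the harmless ones, and in the two short redundancy computations confirming that the exhibited chambers genuinely have six walls; the remainder is formal.
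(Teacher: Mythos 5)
Your proposal is correct, but it takes a genuinely different route from the paper's in the ``only if'' direction. The paper exploits that simpliciality is a \emph{local} property and uses its two localization lemmas (induced subgraphs and edge contractions of $G$ correspond to localizations of $\A_G$): a vertex of degree at least three yields one of finitely many small induced graphs (the claw plus possibly edges among the leaves), and a long cycle contracts to $C_4$; each of these is then ruled out mechanically by the Cuntz--Geis counting criterion $n\chi(\A,-1)+2\sum_{H\in\A}\chi(\A^H,-1)=0$ of \cite{p-CG-13}, evaluated via characteristic polynomials. You instead work with \emph{restrictions}, together with the fact (true, but you should prove or cite it: chambers of $\A^H$ are exactly the relative interiors of facets of chambers of $\A$, and facets of simplicial cones are simplicial) that restriction preserves simpliciality, and you exhibit explicit non-simplicial chambers: the chamber of $\A_{K_{1,3}}$ containing $(-3,2,2,2)$ and the chamber of $\A_{C_4}$ containing $(-3,2,-3,2)$, each with exactly six facets. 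I verified the points you flagged: in both cases the six listed inequalities are irredundant and imply all remaining sign conditions, the restriction of $\A_{C_m}$ to $x_5=\dots=x_m=0$ is exactly $\A_{C_4}$ (the wrap-around arcs produce precisely the connected subsets of $C_4$, never $\{1,3\}$ or $\{2,4\}$), and the at most four extra normals $x_a+x_b,\,x_a+x_c,\,x_b+x_c,\,x_a+x_b+x_c$ that restriction can add are strictly positive on your claw chamber, so it survives unchanged. As for what each approach buys: the paper's argument is computation-light once the localization lemmas and the criterion from \cite{p-CG-13} are available, but it must check the several branching graphs separately; your argument is more elementary and self-contained (no simpliciality criterion needed) and the positivity trick handles all possible extra hyperplanes uniformly, at the price of justifying the restriction lemma and carrying out two explicit facet computations. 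Your ``if'' direction coincides with the paper's, up to your optional Euler-characteristic verification for the triangle.
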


	\begin{theorem}[Thm.~\ref{thm:factored}, Cor.~\ref{cor:ss}]\label{thm:fac}
		Let $G$ be a connected graph.
		\begin{enumerate}
			\item The \csa $A_G$ is factored if and only if $G$ is a path graph or a path-with-triangle graph $\Delta_{n,1}$ for $n\ge 2$.
			\item 	The \csa $\A_G$ is supersolvable if and only if $G$ is a path.
		\end{enumerate}
	\end{theorem}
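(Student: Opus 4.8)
\emph{The two ``if'' directions.} The \csa $\A_{P_n}$ is the braid arrangement (Example~\ref{typeA}), which is classically supersolvable and hence also factored; this already gives the path case of both parts, and it is all that is needed for the ``if'' direction of~(2). For the ``if'' direction of~(1) it remains to prove that $\A_{\Delta_{n,1}}$ is factored, and the plan is to write down a nice partition explicitly. It is helpful here that $\A_{\Delta_{n,1}}$ is obtained from a braid arrangement on $n+1$ strands by adjoining only $n-1$ further hyperplanes (the hyperplanes $\ker(x_1+\cdots+x_j)$, $2\le j\le n$, in a suitable labeling); using this, or building the partition inductively along the family starting from $\A_{\Delta_{2,1}}=\A_{C_3}$ via an addition argument for nice partitions, one produces the required partition. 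In particular this recovers that $\A_{C_3}$ is factored.

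\emph{Tools for the ``only if'' directions.} I would use four facts. (i) A supersolvable arrangement is inductively free, hence free (Definitions~\ref{def:ss} and~\ref{def:free}); so by Theorem~\ref{thm:main} a supersolvable \csa already comes from a path, cycle, almost path, or path-with-triangle graph. (ii) Both factoredness and supersolvability pass to localizations: if $\A$ has the property and $X\in L(\A)$, then so does $\A_X$. (iii) For any induced subgraph $G[I]$ the localization of $\A_G$ at the flat $X=\bigcap_{i\in I}H_{\{i\}}$ is lattice-isomorphic to $\A_{G[I]}$, since $H_K\supseteq X$ precisely when $K\subseteq I$; hence a non-factored (resp.\ non-supersolvable) induced sub-\csa obstructs factoredness (resp.\ supersolvability) of $\A_G$. (iv) By Terao's factorization theorem, a factored arrangement has a characteristic polynomial that splits into linear factors over $\ZZ$.

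\emph{Supersolvability and the reduction for factoredness.} A connected graph that is neither a path nor a cycle contains an induced triangle or an induced claw $K_{1,3}$, because a triangle- and claw-free connected graph has maximum degree at most~$2$. Combining this with (ii) and (iii), the ``only if'' direction of~(2) reduces to: $\A_{P_n}$ is supersolvable (classical); $\A_{C_3}$ and $\A_{K_{1,3}}$ are \emph{not} supersolvable (finite checks --- e.g.\ one verifies that $L(\A_{C_3})$ contains no modular line); and $\A_{C_m}$ is not supersolvable for $m\ge 4$. Similarly, for the ``only if'' direction of~(1) one shows that a connected graph which is neither a path, nor a $\Delta_{n,1}$, nor a cycle $C_m$ with $m\ge 4$ contains, as an induced subgraph, one of the claw $K_{1,3}$, the bull $\Delta_{4,2}$, the diamond $K_4-e$, the bowtie, or $K_4$; for each of these five small graphs $H$ one checks that $\A_H$ is not factored (for $K_4$ this is immediate from (iv), since the characteristic polynomial of the resonance arrangement $\A_{K_4}$ does not split over $\ZZ$, as one sees from its region count; for the bull, diamond and bowtie one rules out all candidate nice partitions directly). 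Every almost path contains an induced claw, and every $\Delta_{n,k}$ with $2\le k\le n-2$ contains an induced bull, so these infinite families are covered by the reduction.

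\emph{The main obstacle: cycles.} For $m\ge 4$ the arrangement $\A_{C_m}$ is free with $\chi(\A_{C_m},t)=(t-1)(t-m)^{m-1}$, which \emph{does} split over $\ZZ$, and all of its proper connected induced subgraphs are paths; so neither (iv) nor the localization reduction applies, and a direct analysis of $L(\A_{C_m})$ is needed. The rank-two flats of $\A_{C_m}$ are ``double'' and ``triple'' points governed by the arc combinatorics of the cycle; a nice partition of $\A_{C_m}$ would have to have block sizes $(1,m,\dots,m)$, put the two hyperplanes through each double point into distinct blocks, and put two of the three hyperplanes through each triple point into a common block. The heart of the argument is to show that these requirements cannot be met simultaneously once $m\ge 4$ --- whereas for $m=3$ they can, yielding the nice partition of $\A_{C_3}=\A_{\Delta_{2,1}}$ used in the first paragraph --- and, by a parallel analysis, that $L(\A_{C_m})$ admits no modular coatom, so that $\A_{C_m}$ is not supersolvable for any $m\ge 3$. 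I expect this cycle analysis to be the technical core of the proof.
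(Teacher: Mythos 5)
There is a genuine gap, and it comes from working only with induced subgraphs. Your localization tool (iii) is the paper's Lemma~\ref{lem:induced_subgraph}, but the paper's classification relies just as heavily on a second localization lemma, Lemma~\ref{lem:edge_contraction}: contracting an edge of $G$ also corresponds to a localization of $\A_G$, so factoredness and supersolvability descend to edge contractions as well. Without it, your graph-theoretic reduction for part (1) is false as stated: a connected graph that is neither a path, nor a $\Delta_{n,1}$, nor a cycle need not contain any of your five graphs as an \emph{induced} subgraph. For instance, the ``house'' (a triangle with two of its vertices joined by a path of length two) and two triangles joined by a path contain no induced claw, bull, diamond, bowtie or $K_4$; the obstructions (an induced $C_4$ in the first case, a bowtie in the second) only appear after contracting edges. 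The paper's proof of Theorem~\ref{thm:factored} runs exactly through such contractions, reducing e.g.\ ``some cycle of length $\ge 4$'' to the three concrete graphs $C_4$, $G_1$, $K_4$, and ``two triangles'' to $G_1$ or $G_5$.

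The second, larger gap is that you leave the cycles $\A_{C_m}$, $m\ge 4$, as an unexecuted ``technical core'': you describe what a nice partition or modular chain would have to look like but do not show it cannot exist. The paper never needs an analysis of $L(\A_{C_m})$ for general $m$: by Lemma~\ref{lem:edge_contraction} every long cycle contracts to $C_4$, and the single arrangement $\A_{C_4}$ is shown not to be factored by the explicit ``conflicting sets'' argument of Proposition~\ref{prop:not_factored} (the same device handles the claw $A_{3,2}$). For supersolvability the paper avoids cycles altogether: supersolvable implies factored (Proposition~\ref{prop:factored_ss}), so part (2) follows from part (1) plus the fact that every $\Delta_{n,1}$ contracts to the triangle, whose \csa is known not to be supersolvable; your detour through freeness and a direct modular-coatom analysis of $\A_{C_m}$ is both unproven and unnecessary. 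The ``if'' directions of your plan (braid arrangement; an inductively constructed nice partition for $\A_{\Delta_{n,1}}$ starting from the triangle) do match the paper's Proposition~\ref{prop:factored}. To repair the ``only if'' directions, import the contraction lemma and replace the open cycle analysis by the finite checks the paper performs.
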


	With regard to $K(\pi,1)$-arrangements we would like to pose the following question:
	\begin{problem}
		Which \csas are $K(\pi,1)$?
	\end{problem}
	A factored arrangement in $\CC^3$ is $K(\pi,1)$ due to Paris~\cite{Par95}.
	Falk Randell asked whether a general complex factored arrangement is $K(\pi,1)$~\cite{FR00}.
	Results in~\cite{MMR22} provide evidence for this question.
	We therefore conjecture that the \csas of path graphs and the graphs $\Delta_{n,1}$ for $n\ge 2$ are $K(\pi,1)$.
	
	Besides the classification of free \csas, we have no characterization of graphs $G$ such that the characteristic polynomial $\chi(\A_G)$ has only integral roots yet. We conjecture:
    \begin{conjecture}
    If $G$ is a connected graph, then
    $\chi(\A_G)$ has only integral roots if and only if $\A_G$ is free.
    \end{conjecture}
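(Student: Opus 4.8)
The forward implication is immediate from Terao's factorization theorem: if $\A_G$ is free with exponents $(d_1,\dots,d_n)$, then $\chi(\A_G,t)=\prod_{i=1}^n(t-d_i)$ with all $d_i$ non-negative integers, so every root is integral; this is also visible directly from the exponents of the four families recorded in Section~\ref{section1}. Hence the substance of the conjecture is the converse, and in view of Theorem~\ref{thm:main} it is equivalent to the purely combinatorial assertion: \emph{if $G$ is a connected graph that is not a path, a cycle, an almost path, or a path-with-triangle, then $\chi(\A_G,t)$ has a root in $\CC\setminus\ZZ$.}

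The plan is first to classify the minimal connected graphs lying outside the four families. Every graph in these families is subcubic of a very restricted shape, so a short structural analysis sorts the minimal obstructions into finitely many \emph{parametrized} types: the star $K_{1,4}$; trees with a single degree-three vertex none of whose three branches is a pendant (smallest: the spider $S_{2,2,2}$); trees with two degree-three vertices joined by a path of arbitrary length, each carrying extra pendant leaves, together with the related caterpillar-type trees; a cycle with one pendant edge or with an extra chord (theta graphs $\Theta_{1,2,2}$, $\Theta_{2,2,2}$, \dots); and the small dense graphs $K_4$, $K_4\setminus e$, $K_{2,3}$, and the bowtie. For each of these types I would set up a deletion--restriction or localization recursion in the size parameter — deleting edges of $G$ that occur in none of the four families yields \csas of subgraphs, hence honest subarrangements $\A_{G\setminus E'}\subseteq\A_G$ in the same $F^n$ — and use it to track a single non-integral root (in practice a complex-conjugate pair), for instance by exhibiting a fixed irreducible quadratic factor of $\chi(\A_G,t)$ or by a discriminant/interlacing argument showing $\chi$ cannot split over $\QQ$.

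The main obstacle is twofold. First, the characteristic polynomial of a \csa has no known closed form in general — even for $G=K_n$ it is computed only for $n\le 9$ — so one must extract just enough arithmetic (the existence of one bad root) without evaluating $\chi(\A_G,t)$ explicitly. Second, and more seriously, there is no monotonicity of the roots of $\chi$ under passing to induced subgraphs or subarrangements: knowing that $G$ contains a ``bad'' induced subgraph does not by itself force $\chi(\A_G,t)$ to have a non-integral root, so the minimal obstructions do not obviously propagate to all of $G$. One would therefore need either to push the recursions through all connected non-family graphs directly, or to isolate a structural feature — a modular-coatom deficiency, a non-free localization with controlled characteristic polynomial, or an obstruction in the broken-circuit complex — that simultaneously certifies non-freeness and survives into the characteristic polynomial. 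Establishing that some such feature always applies is, I expect, the real content, and the reason the statement is posed only as a conjecture.
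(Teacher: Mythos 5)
The statement you are asked about is posed in the paper as a \emph{conjecture}; the paper contains no proof of it, so there is nothing to compare your argument against on the paper's side. Your treatment of the easy direction is correct and complete: freeness forces $\chi(\A_G,t)=\prod_i(t-d_i)$ by Terao's Factorization Theorem (Theorem~\ref{thm:factorization}), so all roots are integral. Given the classification in Theorem~\ref{thm:main}, you also correctly reduce the converse to the assertion that every connected graph outside the four families has a characteristic polynomial with a non-integral root.

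For that converse, however, your proposal is a research plan rather than a proof, and you identify the fatal gap yourself: the paper's non-freeness argument (Proposition~\ref{prop:8_graphs} together with Lemmas~\ref{lem:induced_subgraph} and~\ref{lem:edge_contraction}) only shows that a suitable \emph{localization} $(\A_G)_X$ has a characteristic polynomial with non-integral roots, hence is not free, hence $\A_G$ is not free. Non-integrality of roots does not pass from $\chi(\A_X)$ back to $\chi(\A_G)$ — there is no analogue of the locality of freeness at the level of the characteristic polynomial — so the minimal obstruction graphs certify non-freeness but not the arithmetic of $\chi(\A_G,t)$ itself. Your fallback suggestions (tracking an irreducible quadratic factor through deletion--restriction, interlacing, broken-circuit obstructions) are plausible directions but none is carried out, and no mechanism is exhibited that simultaneously survives to the global characteristic polynomial and applies to all non-family graphs. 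This is exactly why the statement remains a conjecture, as you yourself conclude; the proposal should not be read as establishing it.
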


	This article is organized as follows.
	We start by discussing the relevant background on hyperplane arrangements in Section~\ref{sec:pre}.
	Subsequently we prove that the \csas of cycles, almost paths and path-with-triangle-graphs are free in Sections~\ref{sec:cycle},~\ref{sec:almost_path}, and~\ref{sec:triangle_path}, respectively.
	We finish the proof of Theorem~\ref{thm:main} in Section~\ref{sec:converse} by proving that the \csas of other graphs are not free using localizations of arrangements.
	We conclude by discussing simplicial, factored and supersolvable arrangements in Sections~\ref{sec:simplicial} and~\ref{sec:factored}.
	
	\section{Preliminaries}\label{sec:pre}
	
	\subsection{The characteristic polynomial}
	
	\begin{definition}
		Let $F$ be a field, $n\in\NN$, and $V:=F^n$. An \df{arrangement of hyperplanes} $(\A,V)$ (or $\A$ for short) is a finite set of hyperplanes $\A$ in $V$.
	\end{definition}
	
	\begin{definition}[{\cite[Def.\ 1.12]{OT92}}]
		Let $(\A,V)$ be an arrangement.
		The \df{intersection lattice} $L(\A)$ of $\A$ consists of all intersections of elements of $\A$
		including $V$ as the empty intersection. 
		The \df{rank} $\rk(\A)$ of $\A$ is defined as the codimension of the intersection of all hyperplanes in $\A$.
		For $0 \leq k \leq n$ we write $L_k(\A) := \{ X \in L(\A) \mid \codim(X) = k \}$.
	\end{definition}
	
	\begin{definition}[{\cite[Def.\ 1.13]{OT92}}]
		Let $(\A,V)$ be an arrangement of hyperplanes.
		For $X\in L(\A)$, we define the \df{localization}
		$$
		\A_X := \{ H \in \A \mid X \subseteq H \}
		$$ 
		of $\A$ at $X$, and the \df{restriction} $(\A^X,X)$ of $\A$ to $X$, where 
		$$
		\A^X := \{ X\cap H \mid H \in \A \setminus \A_X \text{ and } X\cap H\ne \emptyset \}.
		$$
	\end{definition}
	
	\begin{definition}[{\cite[Cor.\ 2.57]{OT92}}]\label{def:char_pol}
		Let $(\A,V)$ be an arrangement of hyperplanes.
		The \df{characteristic polynomial} $\chi(\A)\in \CC[t]$ of $\A$ is defined recursively:
		\begin{itemize}
			\item If $\A=\emptyset$, then $\chi(\A)=t^{\rk(\A)}$.
			\item If $H\in \A$, then $\chi(\A)=\chi(\A\backslash \{H\})-\chi(\A^H)$.
		\end{itemize}
(Note that the second case is independent of the choice of $H$.)
	\end{definition}
	
	\subsection{Freeness}
	
	Let $S = S(V^*)$ be the symmetric algebra of the dual space and let $\Der(S)$ be the $S$-module of $F$-derivations of $S$.
	\begin{definition}[{\cite[Def.\ 1.20]{OT92}}]\label{def:free}
		Let $\A$ be an arrangement of hyperplanes in $V$.
		For $H \in \A$ we fix $\alpha_H \in V^*$ with $H = \ker(\alpha_H)$.
		The \df{module of $\A$-derivations} is defined by
		\begin{equation*}
		D(\A) := \{ \theta \in \Der(S) \mid \theta(\alpha_H) \in {\alpha_H}S \text{ for all } H \in \A\}.
		\end{equation*}
		We say that $\A$ is \df{free} if the module of $\A$-derivations is a free $S$-module. The \df{exponents} of $\A$ are the degrees of the elements of a basis of $D(\A)$.
	\end{definition}
	
	\begin{theorem}[\df{Terao's Factorization Theorem}, {\cite[4.137]{OT92}}]\label{thm:factorization}
		If $\A$ is free with exponents $(d_1,...,d_\ell)$, then $$\chi(\A)=\prod_{i=1}^\ell (t-d_i).$$
	\end{theorem}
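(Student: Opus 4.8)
Write $\ell=\dim_F V$ and let $S=S(V^*)$, graded by polynomial degree. The plan is \emph{not} to induct on $|\A|$: although the recursion $\chi(\A)=\chi(\A\setminus\{H\})-\chi(\A^H)$ of Definition~\ref{def:char_pol} is exactly the shape one wants, a free arrangement need not be inductively free, so there may be no $H$ for which $\A\setminus\{H\}$ is free with the exponents shifted, and the induction never starts. Instead I would read $\chi(\A)$ off the graded module $D(\A)$ itself, through the Hilbert-series formula of Solomon and Terao.

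\emph{Step 1.} I would introduce the higher logarithmic derivation modules $D^0(\A)=S$, $D^1(\A)=D(\A)$, $\dots$, $D^{\ell}(\A)$, where $D^p(\A)$ consists of the alternating $S$-multilinear maps $(V^*)^p\to S$ satisfying the logarithmic condition along each $H\in\A$, and assemble
\[
\Phi(\A;x,y):=\sum_{p=0}^{\ell}\mathrm{Hilb}\bigl(D^p(\A);x\bigr)\,y^p.
\]
The input here is the Solomon--Terao theorem \cite{OT92}: for \emph{every} arrangement there is a universal substitution $y=\psi(x,t)$ (in a suitable normalization $\psi(x,t)=(x-1)t-1$) with $\chi(\A,t)=(-1)^{\ell}\lim_{x\to1}\Phi\bigl(\A;x,\psi(x,t)\bigr)$. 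Its proof matches the deletion--restriction recursion of Definition~\ref{def:char_pol} against the exact sequences relating $D^p(\A)$, $D^p(\A\setminus\{H\})$ and $D^p(\A^H)$ for \emph{all} $p$ at once; I would quote this as a black box, as one does in \cite{OT92}.

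\emph{Step 2.} Now I would feed in freeness. Writing $\theta_1,\dots,\theta_{\ell}$ for a homogeneous basis of $D(\A)$ of degrees $d_1,\dots,d_{\ell}$, one has the inclusion $\bigwedge^{p}D(\A)\hookrightarrow D^p(\A)$ (a determinant whose first column is divisible by $\alpha_H$). Here $\bigwedge^{p}D(\A)$ is free, hence reflexive, $D^p(\A)$ is always reflexive, and the two agree after localizing at every height-one prime of $S$ --- trivially away from the zero set of $Q(\A):=\prod_{H\in\A}\alpha_H$, and at the prime $(\alpha_H)$ because there $\A$ localizes to the single hyperplane $H$, for which $D^p=\bigwedge^{p}D^1$ is immediate. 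Hence $\bigwedge^{p}D(\A)=D^p(\A)$, so $D^p(\A)$ is free with a homogeneous basis indexed by the $p$-subsets $I\subseteq\{1,\dots,\ell\}$ in degrees $\sum_{i\in I}d_i$, and therefore
\[
\mathrm{Hilb}\bigl(D^p(\A);x\bigr)=\frac{e_p\bigl(x^{d_1},\dots,x^{d_{\ell}}\bigr)}{(1-x)^{\ell}},\qquad
\Phi(\A;x,y)=\frac{1}{(1-x)^{\ell}}\prod_{i=1}^{\ell}\bigl(1+y\,x^{d_i}\bigr),
\]
with $e_p$ the $p$-th elementary symmetric polynomial. (As a cross-check, Saito's criterion \cite{OT92} gives $\det[\theta_i(x_j)]=c\,Q(\A)$ with $c\ne0$, so $\sum_i d_i=|\A|$, matching the subleading coefficient of $\prod_i(t-d_i)$.)

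\emph{Step 3.} Finally I would substitute and take the limit. With $y=(x-1)t-1$ the $i$-th factor becomes $1+\bigl((x-1)t-1\bigr)x^{d_i}$, which vanishes at $x=1$ with derivative $t-d_i$ there; so the product vanishes to order $\ell$ at $x=1$, cancelling the pole of $(1-x)^{-\ell}$, and a one-line Taylor expansion at $x=1$ gives $\lim_{x\to1}\Phi\bigl(\A;x,(x-1)t-1\bigr)=\prod_{i=1}^{\ell}(d_i-t)$. Thus $\chi(\A,t)=(-1)^{\ell}\prod_{i=1}^{\ell}(d_i-t)=\prod_{i=1}^{\ell}(t-d_i)$. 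The genuinely hard ingredient is the Solomon--Terao formula in Step~1; Step~2's identification $D^p(\A)=\bigwedge^{p}D(\A)$ is a routine reflexivity argument, and Step~3 is bookkeeping with Hilbert series.
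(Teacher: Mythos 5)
The paper states this result as a quoted theorem ([OT92, Thm.\ 4.137]) without giving a proof, and your sketch correctly reproduces the argument from that source: the Solomon--Terao formula as the black box, the identification $D^p(\A)=\bigwedge^p D(\A)$ for free arrangements via reflexivity and agreement at height-one primes, and the resulting factorization of the Hilbert series. All three steps check out --- in particular the limit in Step 3, where $1+\bigl((x-1)t-1\bigr)x^{d_i}$ vanishes to first order at $x=1$ with slope $t-d_i$, so the product cancels the pole of $(1-x)^{-\ell}$ and yields $\prod_{i=1}^{\ell}(t-d_i)$ after multiplying by $(-1)^{\ell}$ --- so there is nothing to add.
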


Terao's addition-deletion Theorem is a useful tool to compute exponents and freeness for large classes of arrangements:

	\begin{theorem}[\cite{Terao_addition}]\label{thm:addition_deletion}
		Let $\A$ be a nonempty arrangement in an $\ell$-dimensional vector space $V$, $H_0\in\A$.
		Then any of the following two statements imply the third:
		\begin{enumerate}[(1)]
			\item $\A$ is free with exponents $(d_1,...,d_\ell)$.
			\item $\A\setminus\{H_0\}$ is free with exponents $(d_1,...,d_\ell-1)$.
			\item $\A^{H_0}$ is free with exponents $(d_1,...,d_{\ell-1})$.
		\end{enumerate}
	\end{theorem}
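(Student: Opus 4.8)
The plan is to reduce everything to a single exact sequence relating the three derivation modules, following Terao's original argument. Set $\A' := \A\setminus\{H_0\}$ and $\A'' := \A^{H_0}$, choose coordinates so that $\alpha_{H_0} = x_\ell$, and write $S'' := S/x_\ell S$, the polynomial ring in $\ell-1$ variables that serves as the coordinate ring of $H_0$. Given $\theta = \sum_{i=1}^{\ell} f_i\partial_i \in D(\A)$ one has $x_\ell \mid f_\ell$ because $\theta(x_\ell)\in x_\ell S$, so reducing the remaining coefficients modulo $x_\ell$ defines $\rho(\theta) := \sum_{i=1}^{\ell-1}\overline{f_i}\,\partial_{x_i}\in\Der(S'')$. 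A short verification shows that $\rho(\theta)\in D(\A'')$, that $\rho$ is $S$-linear for the action of $S$ on the target through $S\twoheadrightarrow S''$, and that $\ker\rho = x_\ell D(\A') = \alpha_{H_0}D(\A')$. This yields an exact sequence of graded $S$-modules
\[
0\longrightarrow \alpha_{H_0}D(\A')\longrightarrow D(\A)\xrightarrow{\ \rho\ } D(\A'')\longrightarrow Q\longrightarrow 0,
\]
with $\alpha_{H_0}D(\A')\cong D(\A')(-1)$ and $Q := \operatorname{coker}\rho$; all of the work is an analysis of this sequence.

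Two general, hypothesis-free inputs are needed. First, the module $D$ of any arrangement is reflexive; consequently a reflexive submodule $M'\subseteq D(\cdot)$ with $D(\cdot)/M'$ supported in codimension $\ge 2$ must equal $D(\cdot)$ (dualise the inclusion twice and use that $\operatorname{Hom}$ into a free module does not see a codimension-$\ge 2$ cokernel). Second, $Q$ is always supported in codimension $\ge 2$ inside $H_0$: localising at the generic point of a hyperplane of $\A''$ reduces $\A$ to a localisation of rank $\le 2$, and for arrangements of rank $\le 2$ one checks by hand that $\rho$ is onto. In addition one must pin down the exponents, and this is where the characteristic polynomial enters: applying the deletion–restriction recursion $\chi(\A)=\chi(\A')-\chi(\A'')$ from Definition~\ref{def:char_pol} together with Terao's Factorization Theorem (Theorem~\ref{thm:factorization}) to the two members assumed free, the remaining characteristic polynomial is forced to split with exactly the asserted roots, so the claimed exponents are the only possibility. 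Passing to Hilbert series $H(M,t)=\sum_k(\dim_F M_k)t^k$, additivity of $H$ along the exact sequence then turns each of the three implications into an identity to be checked, using that a free arrangement with exponents $(d_i)$ has $H(D(\cdot),t)=\bigl(\sum_i t^{d_i}\bigr)/(1-t)^{r}$ with $r$ its rank.

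The implication (1)\,\&\,(2)$\Rightarrow$(3) comes out most directly: since $D(\A)$ and $\alpha_{H_0}D(\A')$ are free, the left half of the sequence exhibits $\operatorname{im}\rho$ as a module with an $S$-free resolution of length $\le 1$; being annihilated by $\alpha_{H_0}$, it is therefore $S''$-free, and its Hilbert series $H(D(\A),t)-t\,H(D(\A'),t)$ determines its exponents. As $\operatorname{im}\rho$ is then reflexive and sits inside $D(\A'')$ with cokernel $Q$ of codimension $\ge 2$, the first general input forces $\operatorname{im}\rho=D(\A'')$, which is (3). For the other two implications I would use the same exact sequence together with the elementary principle that a finitely generated graded $S$-module is free if and only if its reduction modulo a linear nonzerodivisor is free over $S''$, plus Saito's determinant criterion (that $\ell$ derivations whose coefficient determinant is a nonzero scalar multiple of $\prod_{H}\alpha_H$ form a basis) and the numerical constraints above: for (2)\,\&\,(3)$\Rightarrow$(1) one lifts a homogeneous $S''$-basis of $D(\A'')$ through $\rho$ to degree-preserving elements of $D(\A)$, adjoins $\alpha_{H_0}$ times the top-degree member of a basis of $D(\A')$, and checks via Saito's criterion that these $\ell$ elements — which lie in $D(\A)$, are $S$-independent after applying $\rho$, and have degree sum $|\A|$ — form a basis; the case (1)\,\&\,(3)$\Rightarrow$(2) runs dually through the quotient $D(\A')/D(\A)$, an ideal of $S''$ via $\theta\mapsto\overline{\theta(\alpha_{H_0})}$. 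The real obstacle, I expect, is precisely these last two cases: showing that the two freeness hypotheses force $Q=0$, i.e.\ that every $\A''$-derivation lifts to an $\A$-derivation of the same degree. Reflexivity and projective-dimension bookkeeping alone do not suffice here — a reflexive module of projective dimension $1$ need not be free — so one genuinely has to feed in the Hilbert-series identities coming from Terao's factorization to rule out a nonzero, finite-length cokernel.
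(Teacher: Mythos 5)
First, a point of comparison: the paper does not prove this statement at all --- it is quoted from Terao's 1980 paper \cite{Terao_addition} and used as a black box --- so your attempt has to be measured against the standard proof (Terao, or \cite[Ch.~4]{OT92}) rather than anything in this article. Within that frame, your setup is sound: the sequence $0\to\alpha_{H_0}D(\A')\to D(\A)\xrightarrow{\rho}D(\A'')$ with $\ker\rho=\alpha_{H_0}D(\A')$ is correct, $D$ of any arrangement is reflexive, and the cokernel $Q$ is indeed supported in codimension at least two in $H_0$ (localizing at a hyperplane of $\A''$ gives a rank-two localization, where surjectivity of $\rho$ is an easy check, e.g.\ via the Euler derivation). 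Your argument for $(1)\,\&\,(2)\Rightarrow(3)$ is essentially complete: the image of $\rho$ has an $S$-free resolution of length one, is killed by $\alpha_{H_0}$, hence is $S''$-free with the Hilbert series you compute, and reflexivity plus the codimension-two cokernel forces it to be all of $D(\A'')$. This is a legitimate (and rather modern, sheaf-flavoured) route to the deletion--restriction implication.

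The genuine gap is the one you half-concede yourself: the plan for $(2)\,\&\,(3)\Rightarrow(1)$ and $(1)\,\&\,(3)\Rightarrow(2)$ is not a proof, and the proposed repair cannot work as stated. Lifting a homogeneous basis of $D(\A'')$ through $\rho$ presupposes surjectivity of $\rho$ in the relevant degrees, which is exactly the content to be established, and the numerical input you invoke is unavailable: Terao's Factorization Theorem applies only to arrangements already known to be free, so in the addition case you may conclude from deletion--restriction that $\chi(\A)=\prod_i(t-d_i)$, but a factoring characteristic polynomial neither determines the Hilbert series of $D(\A)$ nor implies freeness (that would be far stronger than the theorem; compare Abe's division theorem, which needs genuinely new ideas). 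Likewise $Q$ is only of codimension at least two, not of finite length, and no Hilbert-series identity along your four-term sequence pins it down when one of the three modules is of unknown Hilbert function. The classical argument avoids this entirely: it works with the other exact sequence $0\to D(\A)\to D(\A')\to S/(\alpha_{H_0})$, $\theta\mapsto\overline{\theta(\alpha_{H_0})}$, proves the degree bound $\theta(\alpha_{H_0})\in(\alpha_{H_0},B)$ with $\deg B=|\A'|-|\A''|$, and then modifies a homogeneous basis of $D(\A')$ so that all but one member lies in $D(\A)$, concluding by Saito's criterion; some such ingredient (or an equivalent local-to-global statement forcing $Q=0$) has to be supplied before your last two implications stand.
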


A result in the same spirit is the Multiple-Addition-Theorem; it was first used to prove that ideal subarrangements of Weyl arrangements are free:

	\begin{theorem}[\df{MAT}, {\cite[Thm.~3.1]{ABCHT_mat}}]\label{thm:MAT}
		Let $\A'$ be a free arrangement with $\exp(\A') = (d_1,...,d_\ell)$, $(d_1 \le \dots \le d_\ell)$, and $1 \le p \le \ell$ the multiplicity of the highest exponent, i.e.,
		\[
		d_1 \le \dots \le d_{\ell - p} < d_{\ell-p+1} = \dots = d_\ell =d.
		\]
		Let $H_1,\dots,H_q$ be hyperplanes with $H_i \notin \A'$ for $i = 1,\dots,q$.
		Define
		\[
		\A_j'' \coloneqq (A'\cup {H_j})^{H_j} = \{H \cap H_j \mid H\in\A' \} \, (j=1,\dots,q).
		\]
		Assume that the following three conditions are satisfied:
		\begin{enumerate}
			\item $X\coloneqq H_1\cap \dots \cap H_q$ is $q$-codimensional.
			\item $X\not \subseteq  \bigcup_{H\in \A'}H$.
			\item\label{it:tripel} $|\A'|-|\A_j''|=d$ for all $1\le j\le q$.
		\end{enumerate}
		Then $q\le p$ and $\A\coloneqq \A'\cup \{H_1,\dots,H_q\}$ is free with $\exp(\A)=(d_1,\dots,d_{\ell-q},(d+1)^q)$.
	\end{theorem}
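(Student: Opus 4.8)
\emph{Strategy.} I would argue by strong induction on the number $q$ of added hyperplanes, with Terao's Addition--Deletion Theorem (Theorem~\ref{thm:addition_deletion}) as the engine. One cannot just add $H_1,\dots,H_q$ one at a time: after the first addition the exponent raised from $d$ to $d+1$ becomes the unique largest one, and a second addition would push it to $d+2$. Instead one removes a single hyperplane, say $H_q$, and shows that both $\mathcal{B}:=\A\setminus\{H_q\}=\A'\cup\{H_1,\dots,H_{q-1}\}$ and the restriction $\A^{H_q}$ carry a genuine MAT-configuration with only $q-1$ new hyperplanes; the induction hypothesis then handles each, and Theorem~\ref{thm:addition_deletion} (conditions~(2) and~(3) there yielding~(1)) recombines them. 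The base case $q=1$ is Terao's Addition Theorem: if $\A'$ is free with largest exponent $d$ and $H_1\notin\A'$ satisfies $|\A'|-|\A_1''|=d$, then $\A_1''$ is already free with $\exp(\A_1'')=(d_1,\dots,d_{\ell-1})$, so Theorem~\ref{thm:addition_deletion} yields $\A'\cup\{H_1\}$ free with exponents $(d_1,\dots,d_{\ell-1},d+1)$.

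\emph{The piece $\mathcal{B}$.} Here $X':=H_1\cap\dots\cap H_{q-1}$ is $(q-1)$-codimensional by~(1), and $X'\not\subseteq\bigcup_{H\in\A'}H$: being a linear subspace it would otherwise lie in a single $H\in\A'$, forcing $X:=H_1\cap\dots\cap H_q\subseteq H$ and contradicting~(2). The needed instances of~(3) form a sublist of the given ones. So $\mathcal{B}$ meets the hypotheses with $q-1$ new hyperplanes, and by induction $\mathcal{B}$ is free with $\exp(\mathcal{B})=(d_1,\dots,d_{\ell-q+1},(d+1)^{q-1})$.

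\emph{The piece $\A^{H_q}$.} Since $X\not\subseteq\bigcup_{H\in\A'}H$, no $H\in\A'$ contains $H_q$, so each such $H$ cuts $H_q$ in a hyperplane of $H_q$ and
\[
\A^{H_q}=\A_q''\cup\{\,H_1\cap H_q,\ \dots,\ H_{q-1}\cap H_q\,\},\qquad \A_q''=(\A'\cup\{H_q\})^{H_q}.
\]
Applying the $q=1$ case to $(\A',H_q)$ --- its hypothesis being the $j=q$ instance of~(3) --- makes $\A'\cup\{H_q\}$ free, and then Theorem~\ref{thm:addition_deletion} gives $\A_q''$ free with $\exp(\A_q'')=(d_1,\dots,d_{\ell-1})$, its largest exponent $d$ having multiplicity $p-1$. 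Each $H_i\cap H_q$ ($i<q$) lies outside $\A_q''$ (else $X\subseteq H_i\cap H_q\subseteq H$ for some $H\in\A'$); the common flat $\bigcap_{i<q}(H_i\cap H_q)=X$ has codimension $q-1$ in $H_q$ and is not covered by $\bigcup_{H\in\A'}(H\cap H_q)$; and the analogue of~(3) inside $H_q$ is verified by counting how the codimension $\le 3$ flats of $\A'$ meet the codimension-two flats $H_q\cap H_j$. Thus $\A^{H_q}$ is a MAT-configuration in $H_q$ with $q-1$ new hyperplanes, so by induction it is free with $\exp(\A^{H_q})=(d_1,\dots,d_{\ell-q},(d+1)^{q-1})$, and the induction also returns $q-1\le p-1$, i.e.\ $q\le p$.

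\emph{Recombination and the main obstacle.} Applying Theorem~\ref{thm:addition_deletion} to $\A$, $H_0=H_q$, $\A\setminus\{H_0\}=\mathcal{B}$: since $q\le p$ one has $d_{\ell-q+1}=d$, so $\exp(\mathcal{B})=(d_1,\dots,d_{\ell-q},d,(d+1)^{q-1})$, which is $(d_1,\dots,d_{\ell-q},(d+1)^q)$ with its largest entry decremented (condition~(2)), while condition~(3) --- $\A^{H_q}$ free with $\exp(\A^{H_q})=(d_1,\dots,d_{\ell-q},(d+1)^{q-1})$ --- holds by the previous paragraph. Hence $\A$ is free with $\exp(\A)=(d_1,\dots,d_{\ell-q},(d+1)^q)$. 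I expect the real work to be concentrated in the base case --- that a single count, on top of freeness of $\A'$, forces the restriction $\A_1''$ to be free with the predicted exponents; this is the one step needing information about modules of derivations rather than merely about the intersection lattice. The other delicate (but essentially combinatorial) point is the inheritance of condition~(3) by $\A^{H_q}$ and the attendant matching of multiplicities that produces $q\le p$; the rest is bookkeeping.
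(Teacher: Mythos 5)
First, note that the paper does not prove this statement at all: it is imported verbatim from \cite[Thm.~3.1]{ABCHT_mat}, whose proof constructs an explicit basis of $D(\A'\cup\{H_1,\dots,H_q\})$ from a basis $\theta_1,\dots,\theta_\ell$ of $D(\A')$, treating all $q$ hyperplanes simultaneously rather than inductively. Your reduction scheme (peel off $H_q$, handle $\mathcal{B}$ and $\A^{H_q}$ by induction, recombine via Theorem~\ref{thm:addition_deletion}) is therefore a genuinely different route, and the recombination bookkeeping at the end is sound \emph{given} the two pieces. But the argument has a genuine gap exactly where you locate ``the real work'': the base case $q=1$ is not Terao's Addition Theorem. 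Addition--deletion requires two of its three conditions as input; you have only freeness of $\A'$ plus the count $|\A'|-|\A_1''|=d$, and the assertion that this forces $\A_1''$ to be free with exponents $(d_1,\dots,d_{\ell-1})$ is precisely the $q=1$ case of the theorem being proved --- invoking it is circular. The missing ingredient is a statement about derivations, namely that for every $\theta\in D(\A')$ the image of $\theta(\alpha_{H_1})$ in $S/(\alpha_{H_1})$ is divisible by $\prod_{Y\in\A_1''}\bar\alpha_Y^{\,|\A'_Y|-1}$, a polynomial of degree $|\A'|-|\A_1''|=d$; hence every basis derivation of degree $<d$ already lies in $D(\A'\cup\{H_1\})$, and a degree count via Saito's criterion (together with $\alpha_{H_1}\theta_\ell$) yields the basis. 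Without some such lemma the base case, and with it the whole induction, is unsupported.

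There is a second, smaller gap: the claim that condition~(3) is inherited by the restricted configuration $(\A_q'';\,H_1\cap H_q,\dots,H_{q-1}\cap H_q)$ inside $H_q$. Unwinding it, you need the number of distinct flats $H\cap H_j\cap H_q$, $H\in\A'$, to equal exactly $|\A'|-2d$. The two coincidence counts you are given bound this number from above by $|\A'|-d$ and the join of the two induced partitions from below by $|\A'|-2d$, but the triple-intersection partition can a priori be strictly coarser than that join (four hyperplanes through a common codimension-$3$ flat need not have any three sharing a codimension-$2$ flat), so equality is not a formality; ``counting how the codimension $\le 3$ flats meet'' is a placeholder, not a proof, and it is not clear this step can be completed without the same derivation-theoretic input as the base case. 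In short: the skeleton is a plausible alternative to the published argument, but both load-bearing steps are asserted rather than proved.
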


This motivates the following definition (see also Example \ref{example:MAT}):
	
	\begin{definition}[{\cite{CM_mat}}]
		An arrangement $\A$ is \df{\MAT-free} if there exists a chain of arrangements
		\[
		\A_0\subsetneq \A_1\subsetneq \dots\subsetneq\A_k=\A
		\]
		such that $\A_0$ is the empty arrangement and every pair of arrangements $\A_i$ and $\A_{i+1}$ for $0\le i < k$ satisfies the assumptions of Theorem~\ref{thm:MAT}.
	\end{definition}

	\subsection{0/1-Arrangements}
	We call an arrangement in $\QQ^n$ a \df{$0/1$-arrangement} if all its hyperplanes are of the form $H_I$ for some subsets $I\subseteq \left[n\right]$, where $H_I$ is the hyperplane $\ker \sum_{i\in I} x_i$ as defined in Definition~\ref{def:csa}.
	We will often argue via these subsets in the following.	
	Note that \csas and their subarrangements are all $0/1$-arrangements by definition.
	One of the aspects why $0/1$-arrangements are easier to study than general arrangements is the following lemma.
	
	\begin{lemma}\label{lem:triples}
		For pairwise different nonempty subsets $A_1,A_2,A_3\subseteq \left[n\right]$ and $n\ge 1$ the following two conditions are equivalent:
		\begin{enumerate}
			\item[$(i)$] $\rk (H_{A_1}\cap H_{A_2}\cap H_{A_3} )= 2$ over $\QQ$ and
			\item[$(ii)$] $A_{i_1}\dot{\cup} A_{i_2}=A_{i_3}$ for some choice of pairwise different indices $1\le i_1,i_2,i_3\le 3$ where $\dot{\cup}$ denotes a disjoint set union.
		\end{enumerate}
		If these conditions are satisfied we call $\{A_1,A_2,A_3\}$ a \df{circuit-triple}.
	\end{lemma}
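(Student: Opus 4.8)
The plan is to restate condition $(i)$ purely in terms of the normal vectors. Writing $v_i := \sum_{j\in A_i} e_j \in \QQ^n$ for the $0/1$-vector supported on $A_i$, we have $\rk(H_{A_1}\cap H_{A_2}\cap H_{A_3}) = \codim(H_{A_1}\cap H_{A_2}\cap H_{A_3}) = \dim_\QQ \operatorname{span}(v_1,v_2,v_3)$. Since $A_1,A_2,A_3$ are pairwise different and nonempty, no two of the $v_i$ are proportional (a $0/1$-vector that is a scalar multiple of a nonzero $0/1$-vector must equal it), so two of the $v_i$ are already linearly independent and $\dim\operatorname{span}(v_1,v_2,v_3)\ge 2$ always holds. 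Hence $(i)$ is equivalent to: $v_1,v_2,v_3$ are linearly dependent over $\QQ$. The implication $(ii)\Rightarrow(i)$ is then immediate, since e.g. $A_1\,\dot{\cup}\,A_2 = A_3$ gives $v_1+v_2=v_3$.

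For the converse, suppose $c_1v_1+c_2v_2+c_3v_3=0$ with $(c_1,c_2,c_3)\ne 0$. Because no two $v_i$ are proportional, all three coefficients are nonzero: if, say, $c_3=0$, then $c_1v_1+c_2v_2=0$ with $(c_1,c_2)\ne 0$ would make $v_1,v_2$ proportional. Now I would read the relation coordinate by coordinate. An index $j$ lying in exactly one of the $A_i$, say only in $A_i$, forces $c_i=0$, which is impossible; so every element of $A_1\cup A_2\cup A_3$ lies in at least two of the three sets. If some index lies in all three, then $c_1+c_2+c_3=0$; if an index lies in exactly $A_i$ and $A_k$, then $c_i+c_k=0$. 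Not every element can lie in all three (that would give $A_1=A_2=A_3$), so some element lies in exactly two, say in $A_1\cap A_2$, whence $c_1+c_2=0$; together with $c_1+c_2+c_3=0$ this shows $c_3=0$ if any element were in all three, a contradiction. Therefore no element lies in all three, and $A_1\cup A_2\cup A_3$ is the disjoint union of $X_{12},X_{13},X_{23}$ where $X_{ik}:=A_i\cap A_k$, with $A_i = X_{ik}\,\dot{\cup}\,X_{il}$ for $\{i,k,l\}=\{1,2,3\}$.

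Finally I would check which of the $X_{ik}$ are empty. If only one is nonempty, some $A_i$ is empty, contradiction; so at least two are nonempty. If all three are nonempty, the three relations $c_1+c_2=0$, $c_1+c_3=0$, $c_2+c_3=0$ force $c_1=0$, a contradiction. Hence exactly two of $X_{12},X_{13},X_{23}$ are nonempty, say $X_{23}=\emptyset$; then $A_2=X_{12}$, $A_3=X_{13}$, and $A_1 = X_{12}\,\dot{\cup}\,X_{13} = A_2\,\dot{\cup}\,A_3$, which is $(ii)$ (the other two cases symmetrically yield $A_2=A_1\,\dot{\cup}\,A_3$ or $A_3=A_1\,\dot{\cup}\,A_2$). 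I do not expect a genuine obstacle here; the only point requiring care is bookkeeping the coordinatewise case analysis so that all the emptiness patterns of the $X_{ik}$ are covered and the disjoint-union conclusion really singles out one of the three labellings in $(ii)$.
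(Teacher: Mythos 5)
Your proof is correct and follows essentially the same route as the paper: both reduce $(i)$ to a nontrivial linear dependence among the three $0/1$-normal vectors and then analyze the relation coordinate by coordinate, ruling out elements lying in exactly one or in all three of the sets. Your bookkeeping via the cells $X_{ik}=A_i\cap A_k$ and the induced relations $c_i+c_k=0$ is a clean equivalent of the paper's argument via the multiplicities $s_j$ and the signs of the coefficients.
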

	
	\begin{proof}
		Denote by $\chi_{A_i}$ the characteristic vector of the set $A_i$ in $\QQ^n$, which is a $0/1$-vector with entry $1$ in coordinate $j$ if $j\in A_i$.
		Assume condition $(i)$ holds.
		Thus, the matrix with columns $\chi_{A_1},\chi_{A_2},\chi_{A_3}$ has rank $2$ and there exist $\lambda_1,\lambda_2,\lambda_3\in \QQ$ not all zero with $\sum_{i=1}^3\lambda_iA_i=0$.
		As the sets $A_1,A_2,A_3$ are pairwise different we must have $\lambda_i\neq 0$ for all $1\le i\le 3$.
		Therefore we can without loss of generality assume that $\lambda_1,\lambda_2>0$ and $\lambda_3<0$.
		
		For $1\le j \le n$ set $s_j\coloneqq \sum_{i=1}^3 |A_i\cap \{j\}|$, that is, $s_j$ is the number of times $j$ appears in $A_1,A_2,A_3$.
		The relation $\sum_{i=1}^3\lambda_iA_i=0$ with all $\lambda_i$ being nonzero implies that $s_j$ cannot be $1$ for any $1\le j\le n$.
		
		Assume $s_j=3$ for some $1\le j\le n$.
		The relation  $\sum_{i=1}^3\lambda_iA_i=0$ then yields $\lambda_1+\lambda_2=-\lambda_3$ as we assumed $\lambda_1,\lambda_2>0$ and $\lambda_3<0$.
		Therefore, $s_k$ cannot be $2$ for any $1\le k\le n$ and $s_k$ is either $0$ or $3$ for all $1\le k\le n$.
		This implies that the sets $A_1,A_2,A_3$ are all equal contrary to our assumption.
		
		Hence, $s_k$ is either $0$ or $2$ for all $1\le k\le n$.
		This yields $\lambda_1=-\lambda_3$ and $\lambda_2=-\lambda_3$ using the relation $\sum_{i=1}^3\lambda_iA_i=0$ and our assumption on the signs of $\lambda_i$.
		Therefore, we have $A_1\cap A_2=\emptyset$ and $A_1\cup A_2=A_3$. Thus, condition $(ii)$ holds.
		
		For the converse, observe that $A_{i_1}\dot{\cup} A_{i_2}=A_{i_3}$ implies $\chi_{A_{i_1}}+\chi_{A_{i_2}}=\chi_{A_{i_3}}$ which immediately yields condition $(i)$.
	\end{proof}
	
	Using this lemma we can reduce Condition~\eqref{it:tripel} in Theorem~\ref{thm:MAT} to the following simpler set-theoretic condition for $0/1$-arrangements.
	\begin{lemma}
		Let $\A'=\{H_{I_1},\dots,H_{I_m}\}$ be a $0/1$ arrangement in $\QQ^n$ and $H_I$ an additional hyperplane with $H_I\notin \A'$ and $I_1,\dots,I_m,I\subseteq \left[n\right]$.
		Set $\A_I'' \coloneqq (\A'\cup H_I)^{H_I}$ and $\mathcal{I}\coloneqq \{I_1,\dots,I_m\}$.
		Then
		\begin{align*}
		|\A'|-|\A_I''| = |\{\mbox{circuit-triples among the sets in }\mathcal{I}\cup \{I\}\mbox{ that involve }I\}|.
		\end{align*}
	\end{lemma}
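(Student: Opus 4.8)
The plan is to realise $\A_I''$ as the image of the ``intersect with $H_I$'' map on $\A'$ and to count how much this map collapses. Since $H_I\notin\A'$ and all hyperplanes contain the origin, no $H_{I_j}$ contains $H_I$; hence the localization of $\A'\cup\{H_I\}$ at $H_I$ is just $\{H_I\}$, so $\A_I''=\{H_{I_j}\cap H_I\mid 1\le j\le m\}$, and each $H_{I_j}\cap H_I$ is a codimension-two subspace of $V$ (it cannot equal $H_I$, as that would force $H_I=H_{I_j}$). Thus the map $r\colon\A'\to\A_I''$, $H_{I_j}\mapsto H_{I_j}\cap H_I$, is well defined and surjective, and partitioning $\A'$ into the fibers of $r$ gives
\[
|\A'|-|\A_I''|=\sum_{Y\in\A_I''}\bigl(|r^{-1}(Y)|-1\bigr).
\]

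Next I would determine when two hyperplanes of $\A'$ collapse: for distinct $J,K\in\mathcal I$, I claim $r(H_J)=r(H_K)$ if and only if $\{J,K,I\}$ is a circuit-triple. If $r(H_J)=r(H_K)=:Y$, then $Y$ has codimension two and $Y\subseteq H_J\cap H_K\cap H_I\subseteq H_I$; the middle space cannot be all of $H_I$ (that would force $H_I\subseteq H_J$), so it equals $Y$, hence has rank $2$, and Lemma~\ref{lem:triples} makes $\{J,K,I\}$ a circuit-triple. Conversely, if $\{J,K,I\}$ is a circuit-triple then $H_J\cap H_K\cap H_I$ has codimension two and, sitting inside the codimension-two spaces $H_J\cap H_I$ and $H_K\cap H_I$, coincides with both; so $r(H_J)=r(H_K)$.

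The crucial step is the bound $|r^{-1}(Y)|\le 2$ for every $Y\in\A_I''$. This I would deduce from the observation that a two-dimensional subspace $U\subseteq(\QQ^n)^*$ contains at most three pairwise distinct nonzero $0/1$-vectors: if $v_1,v_2,v_3,v_4\in U$ were four such, then any three of them are pairwise non-proportional and of rank $2$, hence a circuit-triple by Lemma~\ref{lem:triples}; relabelling so that $v_1=v_2\,\dot{\cup}\,v_3$ forces the supports of $v_2$ and $v_3$ to be disjoint, and then none of the three disjoint-union relations available for the circuit-triple $\{v_2,v_3,v_4\}$ can hold (each makes a nonempty set a subset of a set disjoint from it, or forces $v_4=v_1$), a contradiction. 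Applying this with $U=Y^{\perp}$, which contains $\chi_I$ together with all $\chi_{I_j}$ for $H_{I_j}\in r^{-1}(Y)$, yields $|r^{-1}(Y)|\le 2$.

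Finally I would combine the pieces: since each fiber has size $1$ or $2$, $\sum_{Y}(|r^{-1}(Y)|-1)$ equals the number of $Y\in\A_I''$ whose fiber is a two-element set $\{H_J,H_K\}$, and by the second step the assignment $Y\mapsto\{J,K,I\}$ sends these bijectively onto the circuit-triples involving $I$ — injective because $Y=H_J\cap H_I$ is recovered from the pair, surjective because for a circuit-triple $\{J,K,I\}$ the space $Y:=H_J\cap H_I=H_K\cap H_I$ satisfies $r^{-1}(Y)=\{H_J,H_K\}$ by the fiber bound. This gives the claimed equality. I expect the fiber-size bound to be the only genuine obstacle; everything else is bookkeeping with codimensions together with Lemma~\ref{lem:triples}.
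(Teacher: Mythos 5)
Your proof is correct and follows the same basic route as the paper: restrict each hyperplane of $\A'$ to $H_I$ and count how much this collapses, with Lemma~\ref{lem:triples} translating ``two restrictions coincide'' into ``circuit-triple with $I$''. The difference is that the paper's two-line proof stops at the pairwise criterion and the cancellation statement, implicitly assuming that no element of $\A_I''$ arises from more than two hyperplanes of $\A'$; this assumption is genuinely needed, since a fiber of size $f\ge 3$ would contribute $f-1$ to $|\A'|-|\A_I''|$ but $\binom{f}{2}$ circuit-triples involving $I$ to the right-hand side. Your fiber bound --- a two-dimensional subspace of $(\QQ^n)^*$ contains at most three distinct nonzero $0/1$-vectors, proved by playing two applications of Lemma~\ref{lem:triples} against each other --- is exactly what rules this out, so your write-up supplies the one point the paper's argument leaves tacit, and the rest (surjectivity of the restriction map, recovering the fiber from the circuit-triple) matches the paper's bookkeeping.
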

	\begin{proof}
		By definition we have
		\begin{align*}
		|\A_I''| =|\{H_{I_1}\cap H_I,\dots,H_{I_m}\cap H_I\}|.
		\end{align*}
		By Lemma~\ref{lem:triples} pairs of elements in this set agree if and only if the two corresponding subsets form a circuit-triple with $I$.
		All others sets are distinct and their contribution cancels in the difference $|\A'|-|\A_I''|$ with the corresponding hyperplanes appearing in $\A'$ .
	\end{proof}

	\section{Cycle graphs}\label{sec:cycle}
	
	The $n$-th \df{Shi arrangement} $\Shi$ (of type $A$) consists of the affine hyperplanes
	\[
	\ker (x_i-x_j),\:\: \ker (x_i-x_j-1) \mbox{ for }1\le i<j\le n.
	\]
	Therefore the cone $c\Shi$ of the $n$-th Shi arrangement is an arrangement in $\QQ^{n+1}$ and consists of the hyperplanes
	\[
	\begin{array}{lll}
	\ker & x_i-x_j & \text{for }1\le i<j\le n,\\
	\ker & x_i-x_j-x_{n+1} & \text{for }1\le i<j\le n,\\
	\ker & x_{n+1}.\\
	\end{array}
	\]
	Athanasiadis showed that $c\Shi$ is inductively free with $\exp(c\Shi)=(0,1,n,\dots,n)$\cite{Athanasiadis}.
	
	\begin{proposition}\label{prop:cycle}
		Let $F=\QQ$ and $C_n$ be the cycle graph on $n$ vertices.
		Then there exists a linear embedding $\phi:(\QQ^{n})^* \to (\QQ^{n+1})^*$ that maps the \csa $\A_{C_n}$ to the cone of the Shi arrangement $c\Shi$.
		Therefore, the \csa $\A_{C_n}$ is inductively free with exponents $\exp(\Shi)=(1,n,\dots,n)$.
		It has the characteristic polynomial $\chi(\A_{C_n},t)=(t-1)(t-n)^{n-1}$.
		Hence, by Zaslavsky's result $\A_{C_n}$ has $(-1)^n\chi(\A_{C_n},-1)=2 (n+1)^{n-1}$ chambers.
	\end{proposition}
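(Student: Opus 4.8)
The plan is to make the isomorphism with $c\Shi$ completely explicit and then transport Athanasiadis' theorem. The starting point is the observation that the connected induced subgraphs of $C_n$ (with vertices $1,\dots,n$ in cyclic order) are exactly the \emph{arcs}, i.e.\ the sets of consecutive vertices: the $n(n-1)$ proper arcs of lengths $1,\dots,n-1$, together with the full vertex set $[n]$. Hence $\A_{C_n}$ has $n(n-1)+1=n^2-n+1$ hyperplanes, which matches $2\binom n2+1$, the number of hyperplanes of $c\Shi$ (the families $\ker(x_i-x_j)$ and $\ker(x_i-x_j-x_{n+1})$ for $1\le i<j\le n$, plus $\ker x_{n+1}$). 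So one should look for a linear embedding matching these two sets of normals.

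Concretely, I would take $\phi\colon(\QQ^n)^*\to(\QQ^{n+1})^*$ given by
\[
\phi(x_i)=x_i-x_{i+1}\quad(1\le i\le n-1),\qquad \phi(x_n)=x_n-x_1+x_{n+1}.
\]
The images $x_1-x_2,\dots,x_{n-1}-x_n,\,x_n-x_1+x_{n+1}$ are linearly independent (only the last one involves $x_{n+1}$, and the first $n-1$ are visibly independent), so $\phi$ is injective. Then I would verify, by telescoping the sums, that $\phi$ sends the normal $\sum_{i\in I}x_i$ of each arc $I$ to a scalar multiple of a normal of $c\Shi$, distinguishing three cases: (a) an arc $\{a,\dots,b\}\subseteq\{1,\dots,n-1\}$ gives $x_a-x_{b+1}$ with $1\le a<b+1\le n$; (b) an arc $\{a,\dots,n\}$ with $a\ge 2$ gives $x_a-x_1+x_{n+1}=-(x_1-x_a-x_{n+1})$; (c) a wrap-around arc $\{a,\dots,n,1,\dots,b\}$ with $b+1<a$ gives $x_a-x_{b+1}+x_{n+1}=-(x_{b+1}-x_a-x_{n+1})$; and the full cycle $[n]$ gives $x_{n+1}$. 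A short count shows the assignment is a bijection between the two sets of normals: case (a) produces each of the $\binom n2$ hyperplanes $\ker(x_i-x_j)$ exactly once, cases (b) and (c) together produce each of the $\binom n2$ hyperplanes $\ker(x_i-x_j-x_{n+1})$ exactly once (the $n-1$ from (b) with $i=1$, the $\binom{n-1}2$ from (c) with $2\le i<j\le n$), and the cone hyperplane $\ker x_{n+1}$ is hit once.

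It follows that the dual surjection $\phi^\vee\colon\QQ^{n+1}\to\QQ^n$, whose kernel is the line $\QQ\cdot(1,\dots,1,0)$, pulls each hyperplane of $\A_{C_n}$ back to the corresponding hyperplane of $c\Shi$; equivalently, $\A_{C_n}$ is the essentialization of $c\Shi$. Since passing to a linearly isomorphic copy and to an essentialization preserves inductive freeness and removes only a single $0$ from the exponent multiset, Athanasiadis' result that $c\Shi$ is inductively free with $\exp(c\Shi)=(0,1,n,\dots,n)$ gives that $\A_{C_n}$ is inductively free with $\exp(\A_{C_n})=(1,n,\dots,n)$. Terao's Factorization Theorem~\ref{thm:factorization} then yields $\chi(\A_{C_n},t)=(t-1)(t-n)^{n-1}$, and Zaslavsky's theorem gives $(-1)^n\chi(\A_{C_n},-1)=(-1)^n(-2)(-1)^{n-1}(n+1)^{n-1}=2(n+1)^{n-1}$ chambers.

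The only genuinely creative step is finding $\phi$: the naive change of coordinates from Example~\ref{typeA}, $x_i\mapsto x_i-x_{i+1}$, fails because it does not turn the wrap-around arcs into Shi normals; the fix is the single ``twisted'' assignment $\phi(x_n)=x_n-x_1+x_{n+1}$, which both linearizes the wrap-around arcs and sends the longest arc $[n]$ to the cone hyperplane $\ker x_{n+1}$. After that, everything reduces to the bookkeeping above; the subtle point to get right is the case distinction together with the verification that no two arcs are sent to the same normal, which is forced by the matching cardinalities.
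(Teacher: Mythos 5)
Your proof is correct and follows essentially the same route as the paper: you exhibit the same linear embedding (yours is the paper's map up to an overall sign, which does not change the kernels), verify by the same three-case telescoping computation that the arc normals map bijectively onto the normals of $c\Shi$, and then transport Athanasiadis' inductive freeness and exponents, concluding with Terao's factorization and Zaslavsky. The only cosmetic difference is that you make the essentialization step (dropping the exponent $0$ via the dual surjection with kernel $\QQ\cdot(1,\dots,1,0)$) explicit, which the paper leaves implicit.
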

	\begin{proof}
		Consider the map
		\[
		\phi:(\QQ^n)^* \to (\QQ^{n+1})^*,\quad
		x_i \mapsto 
		\begin{cases}
		x_{i+1}-x_i & \mbox{ if }i <n,\\
		x_1-x_n-x_{n+1} &\mbox{ if }i =n.
		\end{cases}
		\]
		The map $\phi$ is a linear embedding of $(\QQ^n)^*$ into $(\QQ^{n+1})^*$.
		We claim that $\phi$ maps the hyperplanes of the \csa $\A_{C_n}$ bijectively to the ones of $c\Shi$.
		Recall that by definition the hyperplanes of $\A_{C_n}$ are given by
		\[ \begin{array}{lll}
		\ker & \sum_{k=i}^j x_k & \mbox{ for } 1\le i \le j \le n,\\
		\ker & \sum_{k=j+1}^n x_k+\sum_{k=1}^i x_k  & \mbox{ for } 1\le i < j \le n-1.
		\end{array}
		\]
		We therefore describe the effect of $\phi$ for each of these hyperplanes:
		\begin{description}
			\item[$\ker \sum_{k=i}^j x_k \mbox{ for } 1\le i \le j < n$] The map $\phi$ sends this hyperplane to $\ker x_{j+1}-x_i$ which is a hyperplane of $c\Shi$.
			\item[$\ker \sum_{k=i}^n x_k \mbox{ for } 1 \le i \le n$] The map $\phi$ sends this hyperplane to $\ker x_1-x_i-x_{n+1}$ which is a hyperplane of $c\Shi$. Note that the case $i=1$ yields the hyperplane $\ker x_{n+1}$ of $c\Shi$.
			\item[$\ker \sum_{k=j+1}^n x_k+\sum_{k=1}^i x_k \mbox{ for } 1\le i < j \le n-1$] The map $\phi$ sends this hyperplane to $\ker x_{i+1}-x_{j+1}-x_{n+1}$ which is also a hyperplane of $c\Shi$.
		\end{description}
		Since every hyperplane in $c\Shi$ is in the image of $\A_{C_n}$ under the map $\phi$ and the number of hyperplanes in both arrangements is equal this defines a bijection between the hyperplanes of these arrangements as claimed.
	\end{proof}

	\section{Almost path graphs}\label{sec:almost_path}
	In this section we consider the \csa associated with the \df{almost-path graphs} $A_{n,k}$ for $1<k<n$ which are graphs on the vertices $\left[n+1\right]$ consisting of a path from $1$ to $n$ together with an additional edge connecting $k$ and $n+1$.
	For instance, Figure~\ref{fig:almost_path} depicts the graph $A_{7,4}$ in this family.
	
	\begin{figure}[hb]
		\begin{tikzpicture}[scale=0.5,mynode/.style={font=\color{#1}\sffamily,circle,draw=black,inner sep=1pt,minimum size=0.5cm}]
		\node[mynode=black] (v1) at (0,0) {$1$};
		\node[mynode=black] (v2) at (3,0) {$2$};
		\node[mynode=black] (v3) at (6,0) {$3$};
		\node[mynode=black] (v4) at (9,0) {$4$};
		\node[mynode=black] (v5) at (12,0) {$5$};
		\node[mynode=black] (v6) at (15,0) {$6$};
		\node[mynode=black] (v7) at (18,0) {$7$};
		
		\node[mynode=black] (v8) at (9,-2) {$8$};
		
		\draw[gray,very thick] (v1)--(v2)--(v3)--(v4)--(v5)--(v6) -- (v7)(v8) --(v4);
		\end{tikzpicture}
		\caption{The almost-path graph $A_{7,4}$.}
		\label{fig:almost_path}
	\end{figure}
	
	\begin{theorem}\label{thm:mat_almost_path}
		Let $A_{n,k}$, $1<k<n$, be an almost path graph on $n+1$ vertices, that is, it consists of a path from vertex $1$ to vertex $n$ with an additional edge connecting vertex $k$ and vertex $n+1$.
		Then the \csa $\A_{A_{n,k}}$ is MAT free with exponents $(1,k+1,\dots,n+1,n-k+2,\dots,n)$.
	\end{theorem}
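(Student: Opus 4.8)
The plan is to show that adding the hyperplanes of $\A_{A_{n,k}}$ in order of increasing size of the defining vertex set yields a MAT-chain. For $0\le m\le n+1$ put
\[
\A_m:=\{H_I\mid \emptyset\ne I\subseteq[n+1],\ A_{n,k}[I]\text{ connected},\ |I|\le m\},
\]
so $\A_0=\emptyset$ and $\A_{n+1}=\A_{A_{n,k}}$. Write $(d_1,\dots,d_{n+1})$ for the claimed exponents and $a_m:=\#\{i\mid d_i\ge m\}$ (so $a_0=a_1=n+1\ge a_2\ge\cdots$). I would prove by induction on $m$ that $\A_m$ is free with highest exponent $m$ of multiplicity $a_m$, and that the pair $\A_{m-1}\subsetneq\A_m$ satisfies the assumptions of Theorem~\ref{thm:MAT} with $\{H_1,\dots,H_q\}=\A_m\setminus\A_{m-1}=\{H_I\mid|I|=m\}$; it will turn out that $q=a_m$ and that $d$, the top exponent of $\A_{m-1}$, equals $m-1$. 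The base case is $\A_0=\emptyset$ (free with exponents $(0^{n+1})$), and the $m=1$ step recovers the Boolean arrangement $\A_1=\{\ker x_1,\dots,\ker x_{n+1}\}$.

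Two combinatorial facts are needed. The connected induced subgraphs of $A_{n,k}$ of size $m$ are exactly the path intervals $[i,i+m-1]\subseteq[1,n]$ together with the sets $[i,i+m-2]\cup\{n+1\}$ with $i\le k\le i+m-2$ and $1\le i\le n-m+2$; a direct count (both sides are piecewise linear in $m$ with breakpoints among $1,2,k+1,n-k+2,n,n+1$) shows their number equals $a_m$. This matches, at each level, the number of hyperplanes added with the number of exponents raised, so summing over $m$ also shows that the final exponent multiset is $(d_1,\dots,d_{n+1})$. For the third condition of Theorem~\ref{thm:MAT} I would invoke the reformulation following Lemma~\ref{lem:triples}: $|\A_{m-1}|-|(\A_{m-1}\cup H_I)^{H_I}|$ equals the number of ways to write a size-$m$ connected set $I$ as a disjoint union of two (necessarily smaller) connected subgraphs. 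A case split on whether $n+1\in I$ shows this equals $m-1=d$: a path interval of length $m$ splits in $m-1$ places, while $[a,b]\cup\{n+1\}$ splits either as $\{n+1\}\,\dot{\cup}\,[a,b]$ or by cutting $[a,b]$ into a prefix (resp.\ suffix) containing $k$ and its complement, which gives $1+(b-a)=m-1$ in all.

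Conditions (1) and (2) of Theorem~\ref{thm:MAT} are the technical core, and I would obtain both from an explicit description of $X_m:=\bigcap_{|I|=m,\ \text{conn}}H_I$. For $2\le m\le n-1$ there are at least two consecutive path windows of size $m$; subtracting their equations forces $v_j=v_{j+m}$ on $[1,n]$, so $v\in X_m$ is determined on $[1,n]$ by its first $m$ coordinates $u_1,\dots,u_m$ subject to $u_1+\dots+u_m=0$, while the equation of each set $[i,i+m-2]\cup\{n+1\}$ pins the omitted residue coordinate to $v_{n+1}$. Checking that this reformulated system is equivalent to the original one and counting parameters gives $\codim X_m=(n-m+1)+\#\{i\mid i\le k\le i+m-2,\ 1\le i\le n-m+2\}=a_m$, which is condition (1). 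For condition (2), a connected set $J$ with $|J|<m$ contributes the form $\sum_{j\in J}x_j$, which on $X_m$ is a sum of cyclically consecutive $u_j$'s (plus $v_{n+1}$ if $n+1\in J$); since $X_m$ is the hyperplane of a single relation in the parameter space, one checks that this form is proportional to that relation only if it involves all $m$ residue classes, i.e.\ only if $|J|\ge m$, so $X_m\not\subseteq H_J$. As there are finitely many such $J$ and $X_m$ is a rational subspace, $X_m\not\subseteq\bigcup_{|J|<m}H_J$. The two remaining sizes $m\in\{n,n+1\}$, where the periodicity argument is unavailable, are small and handled directly: $X_{n+1}=\ker(x_1+\dots+x_{n+1})$, and $X_n$ is cut out by the equation of $[1,n]$ and those of $[1,n-1]\cup\{n+1\}$ and $[2,n]\cup\{n+1\}$, which force $v_1=v_n=v_{n+1}$.

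Finally the induction closes: $\A_{m-1}$ is free with top exponent $m-1$ of multiplicity $a_{m-1}\ge a_m=q$, conditions (1)--(3) hold with $d=m-1$, and Theorem~\ref{thm:MAT} promotes $a_m$ of those copies to $m$; taking $m=n+1$ shows that $\A_{A_{n,k}}$ is free with exponents $(1,k+1,\dots,n+1,n-k+2,\dots,n)$, and the chain $\A_0\subsetneq\A_1\subsetneq\dots\subsetneq\A_{n+1}$ witnesses MAT-freeness. I expect the main obstacle to be the third paragraph — describing $X_m$ precisely and, above all, verifying condition (2), namely that the intersection of the hyperplanes added at level $m$ is not swallowed by the union of those added before — together with the bookkeeping of how the range of the "$(n+1)$-type" sets depends on $k$ and the separate treatment of the boundary sizes near $1$ and near $n+1$.
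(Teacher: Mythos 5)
Your proposal is correct and follows the same overall strategy as the paper: the identical filtration $\A_0\subsetneq\A_1\subsetneq\cdots\subsetneq\A_{n+1}$ by cardinality of the defining vertex set, the same circuit-triple count for condition (3) of Theorem~\ref{thm:MAT} (a path interval splits in $m-1$ places; a set $[a,b]\cup\{n+1\}$ splits as $\{n+1\}\,\dot\cup\,[a,b]$ or by cutting $[a,b]$ and attaching $n+1$ to the piece containing $k$), and the same transpose-of-partition bookkeeping for the exponents. Where you diverge is in condition (1): the paper takes a hypothetical linear dependence among the level-$m$ normals and chases the signs of the coefficients $\lambda_s^{\pm}$ to zero, whereas you describe the intersection $X_m$ explicitly via the $m$-periodicity forced by consecutive path windows and count codimension directly. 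Your route costs a little more care at the boundary sizes $m\in\{n,n+1\}$, but it buys an explicit parametrization of $X_m$ that also lets you verify condition (2) ($X_m\not\subseteq\bigcup_{H\in\A_{m-1}}H$), a point the paper's proof passes over in silence; your phrase ``$X_m$ is the hyperplane of a single relation in the parameter space'' should be read as referring to the coordinates remaining after substituting the pinning equations $u_r=v_{n+1}$, and with that reading the span/proportionality check goes through for both types of smaller sets $J$. I see no gap.
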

	\begin{proof}
		We consider the following chain of arrangements:
		\begin{align*}
		\A_0 &: \mbox{ the empty arrangement in } \QQ^{n+1},\\
		\A_{i} &\coloneqq \A_{i-1} \cup \{ H_{I} \mid H_I\in \A_{A_{n,k}} \mbox{ and }|I|=i\} \mbox{ for }1\le i \le n+1.
		\end{align*}
		
		By construction we have $\A_{n+1}=\A_{A_{n,k}}$, $|\A_1| = n+1$.
		Thus after proving that each pair $(\A_{i-1}, \A_{i})$ satisfies the conditions in Theorem~\ref{thm:MAT}, we can conclude that $\A_{n+1}$ is MAT free with exponents $(d_1,\dots,d_{n+1})$ as defined above.
		
		For the pair $(\A_0, \A_{1})$ all conditions are trivially satisfied as the intersection of the $n+1$ hyperplanes in $\A_1$ is zero-dimensional (the coefficient vectors are linearly independent) and each hyperplane is not involved in any circuit-triple with $\A_0$.
		So consider a pair $(\A_{i-1},\A_i)$ for a fixed $2\le i\le n+1$.
		
		The first step is to show that the intersection of the hyperplanes in $\A_{i}\setminus \A_{i-1}$ are of maximal codimension, or equivalently, the family of vectors defining these hyperplanes is linearly independent.
		For convenience, we set the vector $v_I\in \QQ^{n+1}$ to be the defining vector of the hyperplane $H_I$ for all $I\subseteq \left[n+1\right]$.
		Assume there are $\lambda_I$ for $H_I\in \A_{i}\setminus \A_{i-1}$ such that 
		\begin{align}\label{eq:dependence}
		\sum_{H_I\in \A_{i}\setminus \A_{i-1} }\lambda_I v_I = 0.
		\end{align}
		Let's give names to the hyperplanes in 
		$\A_{i}\setminus \A_{i-1}$ which we use in the following arguments:
		We denote a hyperplane $H_I\in \A_{i}\setminus \A_{i-1}$ by $H_s^+$ if both $s=\min I$ and $n+1\in I$, and by $H_s^-$ if both $s=\min I$ and $n+1\not \in I$.
		Further denote the corresponding $\lambda$-coefficient from Equation~\eqref{eq:dependence} by $\lambda^+_s$ and $\lambda^-_s$.
		Let $s_0$ and $s_1$ be the first and the last index such that both hyperplanes $H^+_{s}$ and $H^-_{s}$ exist in $\A_{i}\setminus \A_{i-1}$.

		By considering the hyperplanes $H^-_1,H^-_2,\dots,H_{s_0-1}^-$ (whose sets all don't contain $n+1$) in this order in light of Equation~\eqref{eq:dependence} we can conclude that $\lambda_s^-=0$ for all $1\le s<s_0$.
		Analogously considering the hyperplanes $H^-_{s_1+1},H^-_{s_1+2},\dots$ in the reverse order in light of Equation~\eqref{eq:dependence} we can conclude that $\lambda_s^-=0$ for all $s_1<s$.
		Secondly, consider the pairs of hyperplanes $H_s^+$ and $H_s^-$ for $s_0\le s\le s_1$.
		Equation~\eqref{eq:dependence} readily implies that $\lambda_s^+=-\lambda_s^-$ for all $s_0\le s \le s_1$.
		By considering the hyperplanes $H_{s-1}^-$ and $H_{s}^+$ for $s_0\le s\le s_1$ we also get $\lambda_s^+=-\lambda_{s-1}^-$.
		
		By possibly multiplying Equation~\eqref{eq:dependence} by $-1$ we can assume that $\lambda_{s_0}^+\ge 0$.
		The above arguments then yield $\lambda_{s_0}^-\le 0$ and $\lambda_{s_0+1}^+\ge 0$.
		We thus iteratively get that $\lambda_{s}^+\ge 0$ for all $s_0\le s\le s_1$.
		Considering the coordinate corresponding to the element $n+1$ in Equation~\eqref{eq:dependence} immediately yields $\lambda_{s}^+ = 0$ for all $s_0\le s\le s_1$.
		The above arguments on the alternation of the sign then also imply $\lambda_{s}^- = 0$ for all $s_0\le s\le s_1$.
		Therefore all $\lambda$-coefficents must actually be $0$ and the hyperplanes in $\A_{i}\setminus \A_{i-1}$ are independent.
		
		The second step is to show that the hyperplane $H_I\in \mathcal{A}_i\setminus \A_{i-1}$ satisfy condition (3) of Theorem~\ref{thm:MAT}.
		By Lemma~\ref{lem:triples} it suffices to show that every hyperplane $H_I\in \mathcal{A}_i\setminus \A_{i-1}$ is involved in exactly $i-1$ circuit-triples with the hyperplanes in $\A_{i-1}$.
		Since all sets corresponding to hyperplanes in $\A_{i-1}$ are strictly smaller than the ones for $\A_i\setminus \A_{i-1}$,  we need to show that there are exactly $i-1$ pairs of hyperplanes $H_A,H_B$ in $\A_{i-1}$ such that $A\dot{\cup} B=I$ for a $H_I\in \A_i\setminus \A_{i-1}$.
		
		First, let $H_I$ be a hyperplane in $\A_i\setminus \A_{i-1}$ with $n+1\not\in I$.
		Hence, $I=\{j,j+1,\dots,j+i-1\}$ for some $1\le j\le n-i+1$.
		Then we can decompose $I$ in exactly $i-1$ ways via $I=\{j,\dots,j+\ell\}\cup \{j+\ell+1,\dots,j+i-1\}$ for $\ell=0,\dots,i-2$.
		
		Now let  $H_I$ be a hyperplane with $n+1\in I$.
		Hence, $I=\{j,j+1,\dots,j+i-2\}\cup \{n+1\}$ for some $1\le j\le n-i+2$ such that $j\le k\le j+i-2$.
		One way to decompose $I$ is by placing $n+1$ as a singleton.
		Subsequently there are $i-2$ more ways of splitting up the chain $\{j,j+1,\dots,j+i-2\}$ as in the first part with the only difference that we add the element $n+1$ to the part that contains the element $k$.
		Thus every hyperplane $H_I$ is involved in exactly $i-1$ circuit-triples with the hyperplanes in $\A_{i-1}$.
		Therefore, $\A_i$ is MAT free.
		
		The statement on the exponents is obtained by counting connected subgraphs and taking the transposed partition of
		$\pi_{n,k}=(|\A_i\setminus \A_{i-1}| \mid i=1,\ldots,n+1)$.
		Since the number of $H_I$ in $\A_i\setminus \A_{i-1}$ with $n+1\in I$ is
		$$|\{j\in\{1,\ldots,n-i+2\} \mid j\le k \le j+i-2\}| = \min \{i-1,k,n+2-i,n+1-k\}, $$
		we have
		$\pi_{n,k}=(n+1)\cup (n-i+1+\min \{i-1,k,n+2-i,n+1-k\} \mid i=2,\ldots,n+1)$, or
		\begin{equation}\label{eq:partition1}
		   \pi_{n,k}=(n+1,\underbrace{n,\ldots,n}_{k},n-1,n-2,\ldots,2k-1,2k-3,\ldots,3,1) 
		\end{equation}
		for $k\le (n+1)/2$, and $\pi_{n,k}=\pi_{n,(n+1-k)}$ for $k>(n+1)/2$. Thus for $k\le (n+1)/2$, the dual of $\pi_{n,k}$ is
		\begin{equation}\label{eq:partition2}
            \pi_{n,k}^T = (n+1,\underbrace{n,n,n-1,n-1,\ldots,n-k+2,n-k+2}_{k-1\text{ pairs}},n-k+1,\ldots,k+1,1),
        \end{equation}
		and of course $\pi_{n,k}^T=\pi_{n,(n+1-k)}^T$ for $k>(n+1)/2$.
		Note that in Equations \eqref{eq:partition1} and \eqref{eq:partition2} the subsequences $(n-1,n-2,\ldots,2k-1)$ and $(n-k+1,\ldots,k+1)$ should be read as the empty sequence when $k=(n+1)/2$.
	\end{proof}
	
	\begin{cor}\label{cor:charpoly_nk}
		The characteristic polynomial of the arrangement $\A_{A_{n,k}}$ is 
        \[
		\chi(\A_{A_{n,k}},t)= (t-1)\prod_{i=k+1}^{n+1}(t-i)\prod_{i=n-k+2}^{n}(t-i).
		\]
	\end{cor}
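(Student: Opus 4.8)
The plan is to deduce the characteristic polynomial directly from freeness. By Theorem~\ref{thm:mat_almost_path} the arrangement $\A_{A_{n,k}}$ is \MAT-free, hence free, with
\[
\exp(\A_{A_{n,k}}) = (1,\, k+1, k+2, \dots, n+1,\, n-k+2, n-k+3, \dots, n).
\]
Terao's Factorization Theorem~\ref{thm:factorization} then yields at once $\chi(\A_{A_{n,k}},t) = \prod_{i=1}^{n+1}(t-d_i)$, where the $d_i$ range over this exponent list. Grouping the linear factors according to the three blocks of the list — the single exponent $1$, the block $k+1,\dots,n+1$, and the block $n-k+2,\dots,n$ — gives exactly
\[
\chi(\A_{A_{n,k}},t) = (t-1)\prod_{i=k+1}^{n+1}(t-i)\prod_{i=n-k+2}^{n}(t-i),
\]
which is the claimed formula.

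The only thing to be checked is the bookkeeping. First I would confirm the count: the three blocks contribute $1 + (n-k+1) + (k-1) = n+1$ exponents, matching $\dim V = n+1$. Next I would cross-check the exponent list against the transpose partition $\pi_{n,k}^T$ produced at the end of the proof of Theorem~\ref{thm:mat_almost_path}: for $k \le (n+1)/2$ its entries are $n+1$ once, the values $n, n-1, \dots, n-k+2$ each twice, the values $n-k+1, \dots, k+1$ each once, and $1$ once — which is precisely the multiset of the three blocks above. For $k > (n+1)/2$ one uses $\pi_{n,k}^T = \pi_{n,n+1-k}^T$; consistently, the claimed polynomial is itself invariant under $k \mapsto n+1-k$ (both $\prod_{i=k+1}^{n+1}(t-i)$ and $\prod_{i=n-k+2}^{n}(t-i)$ get swapped up to the common factor $(t-n-1)$), so no separate argument is needed.

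There is essentially no obstacle: the statement is a formal consequence of the freeness established in the previous theorem together with Terao's factorization, and the residual work is purely the indexing manipulation described above. For completeness I would note that one could alternatively avoid Theorem~\ref{thm:mat_almost_path} and compute $\chi(\A_{A_{n,k}},t)$ by running the deletion–restriction recursion of Definition~\ref{def:char_pol} along the same filtration $\A_0 \subsetneq \A_1 \subsetneq \dots \subsetneq \A_{n+1}$; but this would amount to redoing the circuit-triple analysis of the \MAT\ argument and offers no real saving, so I would not pursue it.
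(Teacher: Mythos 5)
Your proposal is correct and follows the same route the paper intends: Corollary~\ref{cor:charpoly_nk} is an immediate consequence of the freeness and exponents established in Theorem~\ref{thm:mat_almost_path} together with Terao's Factorization Theorem~\ref{thm:factorization}, and your bookkeeping (the count $1+(n-k+1)+(k-1)=n+1$, the match with $\pi_{n,k}^T$, and the invariance under $k\mapsto n+1-k$) is accurate.
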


\begin{example}\label{example:MAT}
The \csa to the Dynkin diagram of type $E_6$ has normal vectors:
\begin{center}
(0,0,0,0,0,1),(0,0,0,0,1,0),(0,0,0,0,1,1),(0,0,0,1,0,0),(0,0,0,1,0,1),\\
(0,0,0,1,1,1),(0,0,1,0,0,0),(0,0,1,0,0,1),(0,0,1,0,1,1),(0,0,1,1,0,1),\\
(0,0,1,1,1,1),(0,1,0,0,0,0),(0,1,0,0,1,0),(0,1,0,0,1,1),(0,1,0,1,1,1),\\
(0,1,1,0,1,1),(0,1,1,1,1,1),(1,0,0,0,0,0),(1,0,0,1,0,0),(1,0,0,1,0,1),\\
(1,0,0,1,1,1),(1,0,1,1,0,1),(1,0,1,1,1,1),(1,1,0,1,1,1),(1,1,1,1,1,1)
\end{center}
This is $\A_{A_{n,k}}$ for $n+1 = 6$, $k=3$.
We have $6,5,5,5,3,1$ of these roots with coordinate sum $1,2,3,4,5,6$ respectively. We can visualize this in a partition:
\begin{center}
\begingroup%
  \makeatletter%
  \providecommand\rotatebox[2]{#2}%
  \newcommand*\fsize{\dimexpr\f@size pt\relax}%
  \newcommand*\lineheight[1]{\fontsize{\fsize}{#1\fsize}\selectfont}%
  \ifx\svgwidth\undefined%
    \setlength{\unitlength}{182.8962091bp}%
    \ifx\svgscale\undefined%
      \relax%
    \else%
      \setlength{\unitlength}{\unitlength * \real{\svgscale}}%
    \fi%
  \else%
    \setlength{\unitlength}{\svgwidth}%
  \fi%
  \global\let\svgwidth\undefined%
  \global\let\svgscale\undefined%
  \makeatother%
  \begin{picture}(1,0.96777248)%
    \lineheight{1}%
    \setlength\tabcolsep{0pt}%
    \put(0,0){\includegraphics[width=\unitlength,page=1]{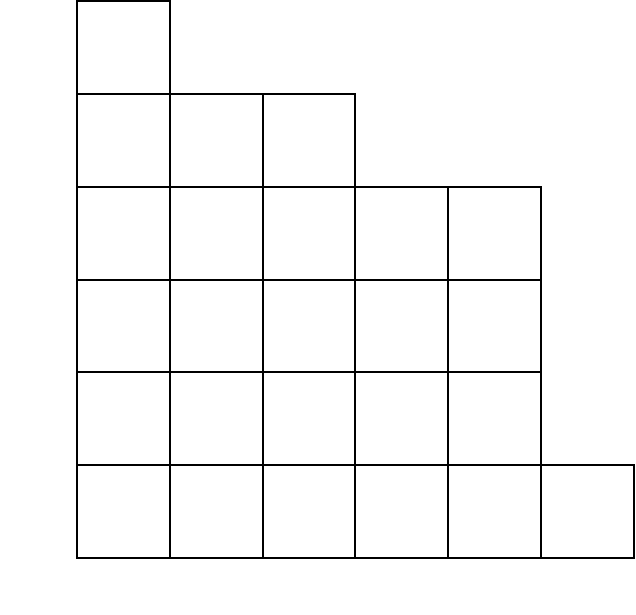}}%
    \put(-0.00270148,0.15385213){\color[rgb]{0,0,0}\makebox(0,0)[lt]{\lineheight{1.25}\smash{\begin{tabular}[t]{l}6\end{tabular}}}}%
    \put(-0.00298486,0.30350619){\color[rgb]{0,0,0}\makebox(0,0)[lt]{\lineheight{1.25}\smash{\begin{tabular}[t]{l}5\end{tabular}}}}%
    \put(-0.00298486,0.45265026){\color[rgb]{0,0,0}\makebox(0,0)[lt]{\lineheight{1.25}\smash{\begin{tabular}[t]{l}5\end{tabular}}}}%
    \put(-0.00298486,0.60179433){\color[rgb]{0,0,0}\makebox(0,0)[lt]{\lineheight{1.25}\smash{\begin{tabular}[t]{l}5\end{tabular}}}}%
    \put(-0.00294708,0.75042829){\color[rgb]{0,0,0}\makebox(0,0)[lt]{\lineheight{1.25}\smash{\begin{tabular}[t]{l}3\end{tabular}}}}%
    \put(-0.00425059,0.90008241){\color[rgb]{0,0,0}\makebox(0,0)[lt]{\lineheight{1.25}\smash{\begin{tabular}[t]{l}1\end{tabular}}}}%
    \put(0.17060946,0.00054795){\color[rgb]{0,0,0}\makebox(0,0)[lt]{\lineheight{1.25}\smash{\begin{tabular}[t]{l}6\end{tabular}}}}%
    \put(0.32022769,0.00054795){\color[rgb]{0,0,0}\makebox(0,0)[lt]{\lineheight{1.25}\smash{\begin{tabular}[t]{l}5\end{tabular}}}}%
    \put(0.46951525,0.00054795){\color[rgb]{0,0,0}\makebox(0,0)[lt]{\lineheight{1.25}\smash{\begin{tabular}[t]{l}5\end{tabular}}}}%
    \put(0.61874623,-0){\color[rgb]{0,0,0}\makebox(0,0)[lt]{\lineheight{1.25}\smash{\begin{tabular}[t]{l}4\end{tabular}}}}%
    \put(0.76803397,-0){\color[rgb]{0,0,0}\makebox(0,0)[lt]{\lineheight{1.25}\smash{\begin{tabular}[t]{l}4\end{tabular}}}}%
    \put(0.91683974,-0){\color[rgb]{0,0,0}\makebox(0,0)[lt]{\lineheight{1.25}\smash{\begin{tabular}[t]{l}1\end{tabular}}}}%
  \end{picture}%
\endgroup%
\end{center}
The dual partition gives the roots
$(1,4,5,6,4,5) = (1,k+1,\dots,n+1,n-k+2,\dots,n)$
of the characteristic polynomial.
\end{example}
	
	\section{Path-with-triangle graphs}\label{sec:triangle_path}
	In this section we consider the \csas associated with the \df{path-with-triangle graphs} $\Delta_{n,k}$ for $1<k<n$ which are graphs on the vertices $\left[n+1\right]$ consisting of a path from $1$ to $n$ together with two additional edges: one connecting $k$ and $n+1$ and one connecting $k+1$ and $n+1$.
	For instance, Figure~\ref{fig:path_triangle} depicts the graph $\Delta_{6,3}$ in this family.
	
	\begin{figure}[ht]
		\begin{tikzpicture}[scale=0.5,mynode/.style={font=\color{#1}\sffamily,circle,draw=black,inner sep=1pt,minimum size=0.5cm}]
		\node[mynode=black] (v1) at (0,0) {$1$};
		\node[mynode=black] (v2) at (3,0) {$2$};
		\node[mynode=black] (v3) at (6,0) {$3$};
		\node[mynode=black] (v4) at (9,0) {$4$};
		\node[mynode=black] (v5) at (12,0) {$5$};
		\node[mynode=black] (v6) at (15,0) {$6$};
		
		\node[mynode=black] (v7) at (7.5,-2) {$7$};
		
		\draw[gray,very thick] (v1)--(v2)--(v3)--(v4)--(v5)--(v6) (v3)--(v7) --(v4);
		\end{tikzpicture}
		\caption{The path-with-triangle graph $\Delta_{6,3}$.}
		\label{fig:path_triangle}
	\end{figure}
	
	\begin{proposition}\label{prop:path_triangle_restriction}
		The \csa $\A_{\Delta_{n,k}}$ is the restriction of the arrangement $\A_{A_{n+1,k+1}}$ to the hyperplane $x_{k+1}=0$.
	\end{proposition}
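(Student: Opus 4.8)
The plan is to compute the restriction of $\A_{A_{n+1,k+1}}$ to the coordinate hyperplane $X:=\ker x_{k+1}$ combinatorially, at the level of the subsets of the vertex set indexing the hyperplanes $H_I$, and then to recognise the outcome as $\A_{\Delta_{n,k}}$ after the obvious relabelling of coordinates.

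First I would record that $X$ is itself a hyperplane of $\A_{A_{n+1,k+1}}$, namely $H_{\{k+1\}}$, and that $\big(\A_{A_{n+1,k+1}}\big)_X=\{H_{\{k+1\}}\}$: for $\emptyset\neq I\subseteq[n+2]$ the inclusion $X\subseteq H_I$ forces $\sum_{i\in I}x_i\in\QQ x_{k+1}$, hence $I=\{k+1\}$. Since all hyperplanes pass through the origin, $X\cap H_I\neq\emptyset$ always, so by definition of the restriction
\[
\A_{A_{n+1,k+1}}^{X}=\{\,H_I\cap X \mid H_I\in\A_{A_{n+1,k+1}},\ I\neq\{k+1\}\,\},
\]
and each $H_I\cap X$ with $I\neq\{k+1\}$ is a genuine hyperplane of $X$. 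Identifying $X$ with $\QQ^{n+1}$ by deleting the $(k+1)$-st coordinate and shifting $x_{k+2},\dots,x_{n+2}$ down to $x_{k+1},\dots,x_{n+1}$ — equivalently, relabelling the vertices of $A_{n+1,k+1}$ other than $k+1$ via $\varphi(i)=i$ for $i\le k$ and $\varphi(i)=i-1$ for $i\ge k+2$ — the hyperplane $H_I\cap X$ becomes $H_{\varphi(I\setminus\{k+1\})}$. So the claim reduces to showing that $\{\varphi(I\setminus\{k+1\})\mid \emptyset\neq I\subseteq V(A_{n+1,k+1}),\ A_{n+1,k+1}[I]\text{ connected},\ I\neq\{k+1\}\}$ is exactly the collection of nonempty connected subsets of $\Delta_{n,k}$, i.e.\ the index set of the hyperplanes of $\A_{\Delta_{n,k}}$.

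The engine of the argument is a general lemma I would isolate: for a graph $G$ and a vertex $v$, let $\tilde G$ be the graph on $V(G)\setminus\{v\}$ obtained from $G-v$ by turning the neighbourhood $N_G(v)$ into a clique; then
\[
\{\,I\setminus\{v\}\mid \emptyset\neq I\subseteq V(G),\ G[I]\text{ connected},\ I\neq\{v\}\,\}=\{\,\emptyset\neq J\subseteq V(G)\setminus\{v\}\mid \tilde G[J]\text{ connected}\,\}.
\]
For one inclusion: if $v\notin I$ then $G[I]=(G-v)[I]$ is connected, hence so is $\tilde G[I]$; if $v\in I$ then connectedness of $G[I]$ forces every component of $G[I\setminus\{v\}]$ to be adjacent to $v$, hence to contain a vertex of $N_G(v)$, and since $N_G(v)$ spans a clique in $\tilde G$ these components are all joined in $\tilde G[I\setminus\{v\}]$. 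For the reverse inclusion: if $\tilde G[J]$ is connected and $(G-v)[J]$ is already connected, take $I=J$; otherwise every component of $(G-v)[J]$ must meet $N_G(v)$ (the only edges of $\tilde G$ absent from $G-v$ lie inside $N_G(v)$, so a component avoiding $N_G(v)$ would stay isolated in $\tilde G[J]$), whence $G[J\cup\{v\}]$ is connected with $(J\cup\{v\})\setminus\{v\}=J$. I would pay attention to the degenerate cases — the empty set, the singleton $\{v\}$, and general singletons $\{w\}$ — which is exactly where the restrictions $I\neq\{v\}$ and $J\neq\emptyset$ are used.

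Finally I would apply the lemma to $G=A_{n+1,k+1}$ with $v=k+1$, whose neighbourhood is $\{k,k+2,n+2\}$. Deleting $v$ leaves the paths $1\!-\!\cdots\!-\!k$ and $(k+2)\!-\!\cdots\!-\!(n+1)$ together with an isolated vertex $n+2$; adding the clique on $\{k,k+2,n+2\}$ reconnects the path through the new edge $\{k,k+2\}$ and attaches $n+2$ to both $k$ and $k+2$; applying $\varphi$ then turns this graph into the path $1\!-\!\cdots\!-\!n$ with the two extra edges $\{k,n+1\}$ and $\{k+1,n+1\}$ — precisely $\Delta_{n,k}$. Combined with the description of $\A_{A_{n+1,k+1}}^{X}$ above, this gives $\A_{A_{n+1,k+1}}^{X}=\A_{\Delta_{n,k}}$. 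The only real obstacle I anticipate is the bookkeeping inside the lemma (the two inclusions and the degenerate cases); once the lemma is in place, both the combinatorial reduction and the graph-theoretic identification $\widetilde{A_{n+1,k+1}}\cong\Delta_{n,k}$ are short.
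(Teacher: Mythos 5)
Your proof is correct and takes essentially the same route as the paper: restrict the defining linear forms to $\ker x_{k+1}$ (i.e.\ delete every occurrence of $x_{k+1}$), relabel coordinates, and identify the result with $\A_{\Delta_{n,k}}$. The general lemma you isolate --- that restricting $\A_G$ to $\ker x_v$ yields $\A_{\tilde G}$ with $v$ deleted and $N_G(v)$ turned into a clique --- is exactly the remark the paper states immediately after this proposition without proof, so your write-up simply supplies the combinatorial details the paper leaves implicit.
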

	\begin{proof}
		The restriction of the arrangement $\A_{A_{n+1,k+1}}$ to the coordinate hyperplane $\ker x_{k+1}$ is the arrangement defined by the same equations after removing every occurrence of $x_{k+1}$.
		After relabeling the variables $x_i\mapsto x_{i-1}$ for $k+2\le i\le n+2$ one obtains exactly the defining equations of the \csa $\A_{\Delta_{n,k}}$.
	\end{proof}
	
	\begin{remark}
		In general, the restriction of a \csa $\A_G$ to the hyperplane $\ker x_v$ corresponding to a vertex $v$ of $G$ yields the \csa $\A_{H}$ where $H$ is the graph that arises from $G$ by removing the vertex $v$ and adding the edges between all neighbors of $v$ in $G$.
	\end{remark}
	
	\begin{proposition}\label{prop:almost_triangle_automorphism}
		There exists a linear automorphism $\psi$ of $(\QQ^n)^*$ that induces a bijection of the hyperplanes of the \csa $\A_{A_{n,k}}$ which maps the hyperplane $\ker \sum_{j=1}^{n+1} x_j$ to the hyperplane $\ker x_k$.
	\end{proposition}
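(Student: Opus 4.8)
The plan is to construct $\psi$ explicitly after passing to coordinates adapted to the three arms of $A_{n,k}$ meeting at the central vertex $k$. First I would record the normals of $\A_{A_{n,k}}$ in the basis $u_a:=x_a+x_{a+1}+\cdots+x_{k-1}$ $(1\le a\le k-1)$, $y:=x_k$, $w_b:=x_{k+1}+\cdots+x_b$ $(k+1\le b\le n)$, $q:=x_{n+1}$ of $(\QQ^{n+1})^*$, with the conventions $u_k:=0$ and $w_k:=0$. Since the normals are the indicators of the connected induced subgraphs — the path intervals $[i,j]\subseteq[1,n]$, those intervals through $k$ with the pendant vertex $n+1$ adjoined, and $\{n+1\}$ — a short computation shows they fall into five families: $u_a-u_m$ for $1\le a<m\le k$ (left arm), $w_b-w_m$ for $k\le m<b\le n$ (right arm), $q$, $u_i+y+w_j$ for $1\le i\le k\le j\le n$ (intervals through $k$), and $u_i+y+w_j+q$ for the same ranges. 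In these coordinates the distinguished hyperplane $\ker(x_1+\cdots+x_{n+1})$ equals $\ker(u_1+y+w_n+q)$ (fifth family) and the target $\ker x_k$ equals $\ker y$ (fourth family).

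Let $\tau$ be the transposition $(1\ k)$ on $\{1,\dots,k\}$ and $\tau'$ the transposition $(k\ n)$ on $\{k,\dots,n\}$. I would then define $\psi$ on the above basis by
\[ \psi(u_i)=u_{\tau(i)}-u_1\quad(1\le i\le k-1), \qquad \psi(w_j)=w_{\tau'(j)}-w_n\quad(k+1\le j\le n), \]
\[ \psi(q)=-q, \qquad \psi(y)=u_1+y+w_n+q, \]
extended linearly. With the conventions $u_k=w_k=0$ the first two formulas remain valid for $i=k$ and $j=k$, both sides then being $0$; in particular the degenerate arm lengths $k=2$ and $k=n-1$ need no separate treatment. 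On the $u$-coordinates $\psi$ is the linear automorphism of the braid subarrangement on the points $0=u_k,u_{k-1},\dots,u_1$ that realizes the transposition swapping the point $0$ with the point $u_1$, and likewise on the $w$-coordinates; hence $\psi$ is an involution, and with respect to a suitable ordering of the basis its matrix is triangular with diagonal entries $\pm 1$, so $\psi$ is invertible.

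The core step is to verify that $\psi$ sends every normal to $\pm$ a normal, so that, being a linear bijection, it induces a bijection of the hyperplanes of $\A_{A_{n,k}}$. From $\psi(u_i)=u_{\tau(i)}-u_1$ we get $\psi(u_a-u_m)=u_{\tau(a)}-u_{\tau(m)}$ and likewise $\psi(w_b-w_m)=w_{\tau'(b)}-w_{\tau'(m)}$, so the first two families are permuted; $\ker q$ is fixed since $\psi(q)=-q$; and — crucially — the terms $-u_1$, $-w_n$, $-q$ occurring in $\psi(u_i)$, $\psi(w_j)$, $\psi(q)$ cancel exactly against the $+u_1$, $+w_n$, $+q$ in $\psi(y)$, giving
\[ \psi(u_i+y+w_j)=u_{\tau(i)}+y+w_{\tau'(j)}+q, \qquad \psi(u_i+y+w_j+q)=u_{\tau(i)}+y+w_{\tau'(j)}, \]
so the fourth and fifth families are interchanged. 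Taking $i=1$, $j=n$ in the second identity yields $\psi(u_1+y+w_n+q)=u_k+y+w_k=y$, that is $\psi\bigl(\ker(x_1+\cdots+x_{n+1})\bigr)=\ker x_k$, as required.

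I expect the only genuine obstacle to be discovering the coordinates $u,w,y,q$ and the formula for $\psi$: in the original coordinates $x_1,\dots,x_{n+1}$ the map is not a graph automorphism and looks opaque, whereas in the arm coordinates it is simply ``reverse each arm and send the central coordinate to the full-support root''. Everything after that is routine: the five-family description of the normals, the (triangular, hence invertible) change of basis $x\leftrightarrow(u,w,y,q)$, and the short computations of the previous paragraph.
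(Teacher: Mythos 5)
Your proof is correct and follows essentially the same strategy as the paper: exhibit an explicit linear automorphism and check that it permutes the defining normals up to sign, exchanging the two families of normals through the central vertex $k$ (with and without $x_{n+1}$) and in particular sending $\sum_{j}x_j$ to $x_k$. The only difference is cosmetic-plus: the paper's map is the coordinatewise arm reversal $x_i\mapsto -x_{k-i}$, $x_i\mapsto -x_{n+1-(i-k)}$, $x_k\mapsto\sum_j x_j$, $x_{n+1}\mapsto -x_{n+1}$, whereas your map (an involution, which the paper's is too) differs from it for $k\ge 4$; both are valid, and your arm coordinates $u,y,w,q$ make the verification on the five families of normals arguably cleaner.
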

	\begin{proof}
		Consider the linear map $\psi:(\QQ^n)^* \to (\QQ^{n})^*$ given by
		\[
		x_i \mapsto 
		\begin{cases}
		-x_{k-i} & \mbox{ if }i <k,\\
		\sum_{j=1}^{n+1} x_j & \mbox{ if }i =k,\\
		-x_{n+1-(i-k)} & \mbox{ if }k<i\le n,\\
		-x_{n+1} &\mbox{ if }i =n+1.
		\end{cases}
		\]
		This map $\psi$ is a linear automorphism of $(\QQ^n)^*$ and clearly maps the hyperplane $\ker \sum_{j=1}^{n+1} x_j$ to the hyperplane $\ker x_k$.
		On the remaining hyperplanes of $\A_{A_{n,k}}$ it has the following effect:
		\begin{description}
			\item[$\ker\sum_{j=a}^b x_j \mbox{ for } 1\le a \le b < k$] The map $\psi$ sends this hyperplane to $\ker\sum_{j=k-b}^{k-a} x_j$ which is a hyperplane of $\A_{A_{n,k}}$. The case of $k<a\le b\le n$ works analogously.
			\item[$\ker\sum_{j=a}^b x_j+\lambda x_{n+1} \mbox{ for } 1\le a \le k \le b \le n$ and $\lambda\in \{0,1\}$] The map $\psi$ sends this hyperplane to
			\[
			\ker \sum_{j=k-a+1}^{n+1-(b-k)} x_j+(1-\lambda)x_{n+1},
			\]
			which is a hyperplane of $\A_{A_{n,k}}$.
			\item[$\ker x_{n+1}$] The map $\psi$ keeps this hyperplane invariant by construction.
		\end{description}
		Hence, the map $\psi$ induces a bijection of the hyperplanes of $\A_{A_{n,k}}$ as claimed.
	\end{proof}
	
	\begin{theorem}\label{thm:path_triangle}
		The \csa $\A_{\Delta_{n,k}}$ is free with exponents $(1,k+2,\dots,n+1,n-k+2,\dots,n+1)$.
		Hence the characteristic polynomial of $\A_{\Delta_{n,k}}$ is
		\[
		\chi(\A_{\Delta_{n,k}},t)= (t-1)\prod_{i=k+2}^{n+1}(t-i)\prod_{i=n-k+2}^{n+1}(t-i).
		\]
	\end{theorem}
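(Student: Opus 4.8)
The plan is to exhibit $\A_{\Delta_{n,k}}$ as a restriction of the almost-path arrangement $\A_{A_{n+1,k+1}}$ and then to read off freeness and exponents from Terao's Addition-Deletion Theorem (Theorem~\ref{thm:addition_deletion}), fed with the freeness data already supplied by the MAT-chain of Theorem~\ref{thm:mat_almost_path}; no new freeness computation will be required. Concretely, by Proposition~\ref{prop:path_triangle_restriction} the \csa $\A_{\Delta_{n,k}}$ is, up to a linear change of coordinates, the restriction $\A_{A_{n+1,k+1}}^{H_{\{k+1\}}}$ to the coordinate hyperplane $\ker x_{k+1}=H_{\{k+1\}}$ (which indeed belongs to $\A_{A_{n+1,k+1}}$ since the singleton $\{k+1\}$ is connected). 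Applying the linear automorphism $\psi$ of Proposition~\ref{prop:almost_triangle_automorphism} to the almost path $A_{n+1,k+1}$ --- it permutes the hyperplanes of $\A_{A_{n+1,k+1}}$ and carries $H_{[n+2]}=\ker\sum_{j=1}^{n+2}x_j$ to $H_{\{k+1\}}$ --- yields $\A_{\Delta_{n,k}}\cong\A_{A_{n+1,k+1}}^{H_{[n+2]}}$, hence with identical freeness and exponents. The point of passing from $H_{\{k+1\}}$ to $H_{[n+2]}$ is that $H_{[n+2]}$ is precisely the unique hyperplane adjoined in the final step of the MAT-chain of Theorem~\ref{thm:mat_almost_path}.

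Write $\A:=\A_{A_{n+1,k+1}}$ and $H_0:=H_{[n+2]}$. By Theorem~\ref{thm:mat_almost_path}, $\A$ is free with
\[
\exp(\A)=(1,\,k+2,\dots,n+2,\;n-k+2,\dots,n+1),
\]
and in this multiset the maximal entry $n+2$ has multiplicity one. Deleting $H_0$ leaves the penultimate member $\A_{n+1}$ of the MAT-chain, namely all $H_I$ with $|I|\le n+1$, which is again free; and because the last MAT-step adjoins the single hyperplane $H_0$ (so $q=1$ in Theorem~\ref{thm:MAT}), its conclusion forces $\exp(\A\setminus\{H_0\})$ to be $\exp(\A)$ with that top entry $n+2$ lowered to $n+1$. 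Thus statements (1) and (2) of Theorem~\ref{thm:addition_deletion} both hold, with the distinguished exponent equal to $n+2$, so statement (3) gives that $\A^{H_0}$ is free with $\exp(\A)$ minus one copy of $n+2$, i.e.
\[
\exp\bigl(\A^{H_0}\bigr)=(1,\,k+2,\dots,n+1,\;n-k+2,\dots,n+1).
\]
Together with the identification of the first paragraph this is exactly the asserted $\exp(\A_{\Delta_{n,k}})$, and the characteristic polynomial follows from Terao's Factorization Theorem (Theorem~\ref{thm:factorization}).

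The only genuinely delicate point is the exponent bookkeeping just above: one must check that $n+2$ is the \emph{unique} maximal entry of $\exp(\A)$ and that deleting $H_0$ lowers precisely that entry by one, so that the hypotheses of Addition-Deletion are met literally and the exponent removed in conclusion (3) is $n+2$ and not a smaller value; everything else is a direct appeal to results already in hand. Finally, it is worth recording that, unlike $\A_{A_{n,k}}$, the arrangement $\A_{\Delta_{n,k}}$ is in general not MAT-free --- the transpose of the level-size partition $(|\A_i\setminus\A_{i-1}|)_i$ does not reproduce the exponents above --- which is exactly why one routes through Theorem~\ref{thm:addition_deletion} instead of imitating the proof of Theorem~\ref{thm:mat_almost_path}.
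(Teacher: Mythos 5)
Your proposal is correct and follows essentially the same route as the paper: freeness of $\A_{A_{n+1,k+1}}$ together with its deletion coming from the final MAT step, the automorphism $\psi$ of Proposition~\ref{prop:almost_triangle_automorphism}, Terao's addition-deletion theorem, and the identification of Proposition~\ref{prop:path_triangle_restriction}; the only (immaterial) difference is that you apply addition-deletion at $\ker\sum_{i=1}^{n+2}x_i$ and then transport the resulting restriction to $\ker x_{k+1}$ via $\psi$, whereas the paper transports the deletion via $\psi$ and applies addition-deletion at $\ker x_{k+1}$. Your closing aside that $\A_{\Delta_{n,k}}$ is in general not MAT-free is unsubstantiated (failure of the cardinality filtration does not rule out other MAT chains), but it plays no role in the argument.
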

	\begin{proof}
		By Theorem~\ref{thm:mat_almost_path} the \csa $\A_{A_{n+1,k+1}}$ is (MAT) free with exponents $(1,k+2,\dots,n+2,n-k+2,\dots,n+1)$.
		In the last step of the MAT filtration of $\A_{A_{n+1,k+1}}$ we add the hyperplane $\ker \sum_{i=1}^{n+2}x_i$.
		Therefore, the arrangement $\A_{A_{n+1,k+1}}\setminus \{ \ker \sum_{i=1}^{n+2}x_i\}$ is free with exponents $(1,k+2,\dots,n+1,n+1,n-k+2,\dots,n+1)$.
		
		Proposition~\ref{prop:almost_triangle_automorphism} now yields that also the arrangement $\A_{A_{n+1,k+1}}\setminus \{ \ker x_{k+1}\}$ is free with exponents $(1,k+2,\dots,n+1,n+1,n-k+2,\dots,n+1)$.
		The addition-deletion Theorem~\ref{thm:addition_deletion} therefore implies that the restriction of $\A_{A_{n+1,k+1}}$ to the hyperplane $\ker x_{k+1}$ is free with exponents $(1,k+2,\dots,n+1,n-k+2,\dots,n+1)$.
		By Proposition~\ref{prop:path_triangle_restriction} this is exactly the arrangement $\A_{\Delta_{n,k}}$ which finishes the proof.
	\end{proof}
	
	\section{Converse direction of the proof}\label{sec:converse}
	In this section we complete the proof of Theorem~\ref{thm:main} by proving that the \csas of all other graphs are not free.
	We start by discussing local properties of arrangements.
	\subsection{Local properties}\label{subsec:loc_prop}
	Recall that given an element $X\in L(\A)$ in the intersection lattice of an arrangement $\A$, the \df{localization} $\A_X$ of $\A$ at $X$ is defined as $\A_X\coloneqq \{H\in \A\mid X\subseteq H\}$.
	
	\begin{definition}
	We call a property $P$ of an arrangement \df{local} if it is persevered under localizations, that is if $\A$ has property $P$ then $\A_X$ for all $X\in L(\A)$.
	\end{definition}

	The properties of being free, simplicial, factored or supersolvable are all local properties, cf.~\cite[Thm. 4.37]{OT92},~\cite[Prop 3.2]{Sta72},~\cite[Cor. 3.90]{OT92}, and \cite[Lemma 2.27]{OT92}, respectively.
	We proceed by proving two lemmas on how local properties are inherited by certain graph theoretic constructions.
	
	\begin{lemma}\label{lem:induced_subgraph}
		The $G=(N,E)$ be a graph and $P$ be a local arrangement property.
		Assume the \csa $\A_G$ has property $P$.
		Let $S\subseteq N$ be a subset of nodes and $G\left[S\right]$ be the induced subgraph on $S$.
		Then $\A_{G\left[S\right]}$ also has property $P$.
	\end{lemma}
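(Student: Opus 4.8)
The plan is to realize $\A_{G[S]}$ as a localization of $\A_G$, so that the statement follows immediately from the assumption that $P$ is local. The key observation is that the hyperplanes of $\A_G$ indexed by subsets $I \subseteq S$ with $G[I]$ connected are exactly the hyperplanes of $\A_{G[S]}$ (viewed in the appropriate coordinate subspace), because whether $G[I]$ is connected depends only on the induced subgraph on $S$ when $I \subseteq S$. So I want to exhibit a flat $X \in L(\A_G)$ whose localization $(\A_G)_X$ consists precisely of these hyperplanes.

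First I would set up coordinates. Write $N = \{1,\dots,n\}$ and, after relabeling, $S = \{1,\dots,m\}$ with $m = |S|$. Consider the subspace
\[
X \coloneqq \bigcap_{j \in N \setminus S} \ker x_j \;=\; \{ v \in F^n \mid v_j = 0 \text{ for all } j \notin S \}.
\]
Next I would check that $X \in L(\A_G)$, i.e. that $X$ is actually an intersection of hyperplanes of $\A_G$: for each singleton $\{j\}$ with $j \notin S$, the induced subgraph $G[\{j\}]$ is a single vertex, hence connected, so $H_{\{j\}} = \ker x_j \in \A_G$; therefore $X = \bigcap_{j \notin S} H_{\{j\}}$ is indeed a flat.

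The heart of the argument is to identify $(\A_G)_X$. A hyperplane $H_I = \ker \sum_{i \in I} x_i$ (with $G[I]$ connected) contains $X$ if and only if the linear form $\sum_{i \in I} x_i$ vanishes on $X$. Since $X$ is cut out by $x_j = 0$ for $j \notin S$, the restriction of $\sum_{i\in I} x_i$ to $X$ is $\sum_{i \in I \cap S} x_i$, and this is the zero form on $X$ precisely when $I \cap S = \emptyset$, i.e. when $I \subseteq N \setminus S$. That is the wrong family. The correct statement is instead that $\A^X_G$, the \emph{restriction}, recovers $\A_{G[S]}$ up to the relabeling described in the Remark preceding this lemma — but restriction does not preserve $P$ in general. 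So the localization must be taken at a different flat: I would instead use
\[
X \coloneqq \bigcap_{j \in S} \ker x_j,
\]
no — the clean choice is to localize at the flat $Y = \bigcap_{H \in \A_{G[S]}} H$ after embedding $\A_{G[S]}$ into $\A_G$ via the inclusion $F^S \hookrightarrow F^N$ on the first $m$ coordinates and observing that a hyperplane $H_I$ of $\A_G$ with $I \subseteq S$ restricts to the corresponding hyperplane of $\A_{G[S]}$, while $\A_{G[S]}$ itself, as a rank-$(m-1)$ arrangement, sits inside $\A_G$; then $(\A_G)_Y = \{ H_I \in \A_G \mid Y \subseteq H_I \}$, and $Y \subseteq H_I$ forces $\sum_{i \in I} x_i$ to vanish on $Y \supseteq \{v : v_i = v_{i'} \text{ for } i,i' \in S,\ v_j = 0 \text{ for } j \notin S\}^{\perp\text{-type arg}}$; this yields exactly the $H_I$ with $I \subseteq S$, which are the connected-subgraph hyperplanes of $G[S]$.

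I expect the main obstacle to be precisely this bookkeeping: matching the correct flat $Y$ so that localization — not restriction — reproduces $\A_{G[S]}$, and then checking that $(\A_G)_Y$, as an abstract arrangement, is linearly isomorphic to $\A_{G[S]}$ (possibly after discarding a common kernel / trivial factor), so that property $P$ transfers. Once the identification $(\A_G)_Y \cong \A_{G[S]}$ is in place, the conclusion is immediate: by hypothesis $\A_G$ has the local property $P$, hence so does every localization, in particular $(\A_G)_Y$, hence so does $\A_{G[S]}$.
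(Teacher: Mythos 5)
Your final argument is correct and is essentially the paper's proof: the paper localizes at $X_S=\bigcap_{i\in S}\ker x_i$, which coincides with your flat $Y=\bigcap_{H\in\A_{G[S]}}H$ (since the singletons $\{i\}$, $i\in S$, are connected), and the identification $(\A_G)_{X_S}=\A_{G[S]}$ follows exactly from the computation you indicate, namely that $\sum_{i\in I}x_i$ vanishes on $X_S$ iff $I\subseteq S$. Your first attempt with $\bigcap_{j\notin S}\ker x_j$ is indeed the wrong flat (it gives the restriction, not the localization), but you correctly recognize and discard it, so the only blemish is the garbled description of $Y$ near the end, which does not affect the argument.
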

	\begin{proof}
		Consider the element $X_S \coloneqq \cap_{i\in S}H_i,$ in the intersection lattice of $\A_G$
		where $H_{i}$ is the hyperplane $\ker x_i$.
		Since the property $P$ is by assumption local, the arrangement $(\A_G)_{X_S}$ also has property $P$.
		Therefore it suffices to prove $(\A_G)_{X_S}=\A_{G\left[S\right]}$ where we regard $\A_{G\left[S\right]}$ in the larger ambient space $\QQ^{|N|}$.
		
		Indeed, for a hyperplane $H_B\in \A_G$ with $B\subseteq N$ we have $H_B\in (\A_G)_{X_S}$ if and only if $X_S \subseteq H_B$.
		By construction of $X_S$ this is equivalent to the condition that every point $p\in\QQ^{|N|}$ with $p_i=0$ for all $i\in S$ lies in $H_B$.
		This is in turn equivalent to $B\subseteq S$ and hence to $H_B\in \A_{G\left[S\right]}$ as $H_B$ was assumed to be an element of $\A_G$, that is the subgraph $G\left[B\right]$ is connected.		
	\end{proof}

    \begin{definition}
        Let $G=(N,E)$ be a graph and $e=\{i,j\}\in E$ an edge.
        We denote by $G/e$ the graph with \df{contracted edge} $e$, i.e.\ the graph in which the vertices $i,j$ are identified to one vertex $k$, each edge to $i$ or $j$ becomes an edge to $k$, and multiplicities of edges or loops are removed ($G/e$ is a simple graph).
    \end{definition}

	\begin{lemma}\label{lem:edge_contraction}
		The $G=(N,E)$ be a graph and $P$ be a local arrangement property.
		Assume the \csa $\A_G$ has property $P$.
		Let $e=\{i,j\}\in E$ be an edge of $G$ and denote by $G/e$ the graph with the contracted edge $e$.
		Then $\A_{G/e}$ also has property~$P$.
	\end{lemma}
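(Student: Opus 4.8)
The plan is to follow the template of the proof of Lemma~\ref{lem:induced_subgraph}: I will produce an element $Y$ of the intersection lattice $L(\A_G)$ whose localization $(\A_G)_Y$ agrees, after an invertible linear change of coordinates and the adjunction of an inessential direct factor, with the \csa $\A_{G/e}$; since $P$ is a local property this will finish the proof. Write $e=\{i,j\}$ and let $k$ denote the vertex of $G/e$ obtained by identifying $i$ and $j$, so that $G/e$ has vertex set $N'=(N\setminus\{i,j\})\cup\{k\}$. For $I\subseteq N'$ set $\iota(I):=I$ if $k\notin I$ and $\iota(I):=(I\setminus\{k\})\cup\{i,j\}$ if $k\in I$. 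The first step is the elementary observation that $(G/e)[I]$ is connected if and only if $G[\iota(I)]$ is connected: when $k\in I$ the edge $e$ lies in $G[\iota(I)]$, contracting it yields exactly $(G/e)[I]$, and contracting an edge preserves connectedness in both directions, while passing to the underlying simple graph (deleting the resulting loop and any multiple edges) does not change connectedness; when $k\notin I$ one simply has $(G/e)[I]=G[I]=G[\iota(I)]$.

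Next I would define
\[
Y:=\Bigl(\bigcap_{a\in N\setminus\{i,j\}}\ker x_a\Bigr)\cap\ker(x_i+x_j).
\]
Each hyperplane occurring here belongs to $\A_G$ — the hyperplanes $\ker x_a=H_{\{a\}}$ because single vertices induce connected subgraphs, and $\ker(x_i+x_j)=H_{\{i,j\}}$ because $\{i,j\}=e\in E$ — hence $Y\in L(\A_G)$; explicitly $Y=\QQ\,(e_i-e_j)$. A direct computation then gives the localization: for $H_B\in\A_G$ one has $Y\subseteq H_B$ if and only if the form $\sum_{b\in B}x_b$ vanishes on $e_i-e_j$, i.e.\ if and only if $B$ contains neither or both of $i$ and $j$, so that
\[
(\A_G)_Y=\{\,H_B\in\A_G\mid B\cap\{i,j\}\in\{\emptyset,\{i,j\}\}\,\}.
\]

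Finally I would identify $(\A_G)_Y$ with $\A_{G/e}$ through the surjection $\pi\colon\QQ^{N}\twoheadrightarrow\QQ^{N'}$ determined by $\pi(e_a)=e_a$ for $a\ne i,j$ and $\pi(e_i)=\pi(e_j)=e_k$, whose kernel is precisely $Y$. Its transpose $\pi^{*}$ fixes $x_a$ for $a\ne i,j$ and sends $x_k$ to $x_i+x_j$, and is a linear isomorphism of $(\QQ^{N'})^{*}$ onto the annihilator $Y^{\perp}$. Using the equivalence of the first step one checks that $\pi^{*}$ carries the defining form of each hyperplane $H_I$ of $\A_{G/e}$ to the defining form of the hyperplane $H_{\iota(I)}\in(\A_G)_Y$, and that $I\mapsto\iota(I)$ is a bijection between the hyperplane sets of these two arrangements; equivalently, $(\A_G)_Y$ is the pullback along $\pi$ of the \csa $\A_{G/e}$ viewed on $\QQ^{N'}\cong\QQ^{N}/Y$. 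As in the proof of Lemma~\ref{lem:induced_subgraph}, freeness, simpliciality, factoredness and supersolvability are all insensitive to an invertible linear change of coordinates and to adjoining an empty direct factor, so $\A_{G/e}$ enjoys property $P$ exactly because $(\A_G)_Y$ does, the latter being a localization of $\A_G$. The only point that needs genuine care is the graph-theoretic equivalence of the first step together with the bookkeeping of the hyperplane bijection; everything else is routine linear algebra, so I anticipate no real obstacle.
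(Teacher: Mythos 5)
Your proposal is correct and follows essentially the same route as the paper: your subspace $Y=\QQ(e_i-e_j)$ coincides with the paper's localizing element $X_e$ (the paper just writes it as the intersection of all $H_B$ with $|B\cap\{i,j\}|\neq 1$), and your map $\pi^*$ is exactly the paper's embedding $\phi$ with $x_k\mapsto x_i+x_j$, followed by the same bijection $I\mapsto\iota(I)$ on hyperplanes. The only (harmless) differences are that you make the graph-theoretic equivalence and the inessential-factor point explicit, which the paper leaves implicit.
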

	\begin{proof}
		Consider the element
		\[
		X_e \coloneqq \bigcap_{B\subseteq N,\: |B\cap\{i,j\}|\neq 1}H_B,
		\]
		in the intersection lattice of $\A_G$.
		Again since the property $P$ is by assumption local, the arrangement $(\A_G)_{X_e}$ also has property $P$.
		
		Let $G'=(N',E')$ be the graph that arises from $G$ by contracting the edge $e$.
		We can assume that $N'=N\setminus\{j\}$ as contracting the edge $e=\{i,j\}$ identifies the two nodes $i$ and $j$.
		We define a linear embedding $\phi : (\QQ^{|N'|})^*\hookrightarrow (\QQ^{|N|})^*$ by setting $\phi(x_k)\coloneqq x_k$ for $k\neq i$ and $\phi(x_i)\coloneqq x_i+x_j$.
		We claim that $\phi$ maps the hyperplanes of $\A_{G'}$ bijectively to the hyperplanes in $(\A_G)_{X_e}$ which proves by the above discussion that $\A_{G'}$ also has property $P$ as desired.
		
		Let $H'_{B'}$ with $B'\subseteq N'$ be a hyperplane in $\A_{G'}$. To prove that $\phi$ maps $H'_{B'}$ to a hyperplane in $(\A_G)_{X_e}$ we consider the following two cases.
		\begin{description}
			\item[Case 1] Suppose $i\notin B'$. By construction, $\phi$ maps $H'_{B'}$ to the hyperplane $H_B\in \A_G$ with $B=B'$ as we have $G'[B']=G[B]$ in this case.
			Thus, we also have $B\cap \{i,j\}=\emptyset$ and therefore $X_e\subseteq H_B$ by definition of $X_e$.
			Hence, $H_B\in (\A_G)_{X_e}$.
			\item[Case 2] Suppose $i\in B'$.  In this case, $\phi$ maps $H'_{B'}$ to the hyperplane $H_B$ with $B=B'\cup\{j\}$ by construction of $\phi$.
			The hyperplane $H_B$ is in $\A_G$ as $G'[B']$ is connected by assumption which implies that $G[B]$ is connected too.
			Lastly, as $\{i,j\} \subseteq B$ we again obtain $X_e\subseteq H_B$ and thus $H_B\in (\A_G)_{X_e}$.
		\end{description}
		To show that this map is bijective it suffices to prove it is surjective as the set of hyperplanes is finite.
		Let $H_B$ be a hyperplane in $(\A_G)_{X_e}$.
		By construction of $X_e$ this means that $|B\cap \{i,j\}|\neq 1$.
		The above discussion therefore shows that $\phi$ maps the hyperplane $H'_{B\cap N'}$ to the hyperplane~$H_B$ which completes the proof.
	\end{proof}
	
	\subsection{Classification of free \csas}
	In this subsection we classify the free \csas.
	Together with Proposition~\ref{prop:cycle} and Theorems~\ref{thm:mat_almost_path} and~\ref{thm:path_triangle} this concludes the proof of Theorem~\ref{thm:main}.
	We start with the following proposition.

	\begin{figure}[ht!]
\begingroup%
  \makeatletter%
  \providecommand\color[2][]{%
    \errmessage{(Inkscape) Color is used for the text in Inkscape, but the package 'color.sty' is not loaded}%
    \renewcommand\color[2][]{}%
  }%
  \providecommand\transparent[1]{%
    \errmessage{(Inkscape) Transparency is used (non-zero) for the text in Inkscape, but the package 'transparent.sty' is not loaded}%
    \renewcommand\transparent[1]{}%
  }%
  \providecommand\rotatebox[2]{#2}%
  \newcommand*\fsize{\dimexpr\f@size pt\relax}%
  \newcommand*\lineheight[1]{\fontsize{\fsize}{#1\fsize}\selectfont}%
  \ifx\svgwidth\undefined%
    \setlength{\unitlength}{373.19945595bp}%
    \ifx\svgscale\undefined%
      \relax%
    \else%
      \setlength{\unitlength}{\unitlength * \real{\svgscale}}%
    \fi%
  \else%
    \setlength{\unitlength}{\svgwidth}%
  \fi%
  \global\let\svgwidth\undefined%
  \global\let\svgscale\undefined%
  \makeatother%
  \begin{picture}(1,0.56056855)%
    \lineheight{1}%
    \setlength\tabcolsep{0pt}%
    \put(0,0){\includegraphics[width=\unitlength,page=1]{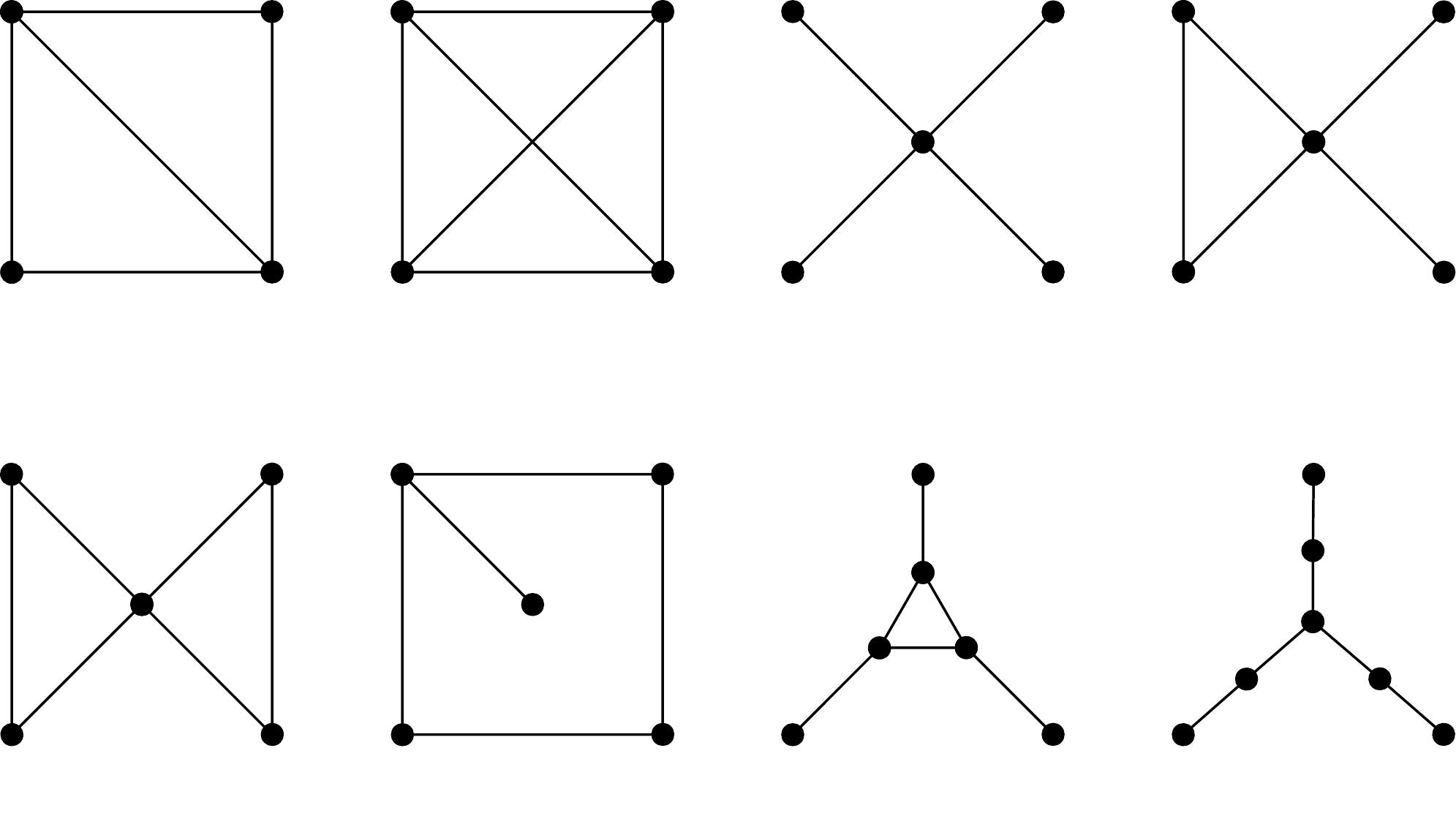}}%
    \put(0.00297192,0.31601094){\color[rgb]{0,0,0}\makebox(0,0)[lt]{\lineheight{1.25}\smash{\begin{tabular}[t]{l}Graph $G_1$\end{tabular}}}}%
    \put(0.27012925,0.31601094){\color[rgb]{0,0,0}\makebox(0,0)[lt]{\lineheight{1.25}\smash{\begin{tabular}[t]{l}Graph $G_2$\end{tabular}}}}%
    \put(0.53240701,0.31601094){\color[rgb]{0,0,0}\makebox(0,0)[lt]{\lineheight{1.25}\smash{\begin{tabular}[t]{l}Graph $G_3$\end{tabular}}}}%
    \put(0.80566381,0.31601094){\color[rgb]{0,0,0}\makebox(0,0)[lt]{\lineheight{1.25}\smash{\begin{tabular}[t]{l}Graph $G_4$\end{tabular}}}}%
    \put(-0.00068777,0.00493734){\color[rgb]{0,0,0}\makebox(0,0)[lt]{\lineheight{1.25}\smash{\begin{tabular}[t]{l}Graph $G_5$\end{tabular}}}}%
    \put(0.26646957,0.00493734){\color[rgb]{0,0,0}\makebox(0,0)[lt]{\lineheight{1.25}\smash{\begin{tabular}[t]{l}Graph $G_6$\end{tabular}}}}%
    \put(0.5287473,0.00493734){\color[rgb]{0,0,0}\makebox(0,0)[lt]{\lineheight{1.25}\smash{\begin{tabular}[t]{l}Graph $G_7$\end{tabular}}}}%
    \put(0.80200411,0.00493734){\color[rgb]{0,0,0}\makebox(0,0)[lt]{\lineheight{1.25}\smash{\begin{tabular}[t]{l}Graph $G_8$\end{tabular}}}}%
  \end{picture}%
\endgroup%
		\caption{Obstruction graphs for freeness with up to $7$ nodes.}
		\label{fig:8graphs}
	\end{figure}

	\begin{proposition}\label{prop:8_graphs}
    The characteristic polynomials of the \csas of the graphs in Figure \ref{fig:8graphs} are:
    \begin{center}
    \setlength{\tabcolsep}{2pt}
	\renewcommand{\arraystretch}{1.3}
    \begin{tabular}{L L@{}}
    \toprule
    \text{Graph } $G\quad$ & \chi(\A_G) \\
    \midrule \rowcolor{lightgray}
     G_1 & (t - 1)(t - 4)(t^2 - 9t + 21) \\
     G_2 & (t - 1)(t - 4)(t^2 - 10t + 26) \\\rowcolor{lightgray}
     G_3 & (t - 1)(t - 4)(t - 5)(t^2 - 10t + 29) \\
     G_4 & (t - 1)(t - 5)(t^3 - 15t^2 + 78t - 138) \\\rowcolor{lightgray}
     G_5 & (t - 1)(t - 5)^2(t^2 - 11t + 33) \\
     G_6 & (t - 1)(t - 5)^2(t^2 - 10t + 26) \\\rowcolor{lightgray}
     G_7 & (t - 1)(t - 5)(t^4 - 23t^3 + 200t^2 - 784t + 1180) \\
     G_8 & (t - 1)(t - 5)(t - 7)(t^4 - 23t^3 + 200t^2 - 784t + 1188)\\
    \bottomrule
    \end{tabular}
    \end{center}
    \medskip
    Note that the factors of the polynomials on the right are irreducible over $\QQ$.
	\end{proposition}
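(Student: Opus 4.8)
This proposition is a finite computation, carried out once for each of the eight graphs in Figure~\ref{fig:8graphs}: the plan is to compute every characteristic polynomial explicitly and then certify irreducibility of the leftover factors by elementary number theory. First, for each $G_i$ I would make the arrangement $\A_{G_i}$ concrete by running through the nonempty subsets $I$ of the vertex set (at most $2^7-1$ of them) and retaining those with $G_i[I]$ connected; by Definition~\ref{def:csa} these index exactly the hyperplanes $H_I=\ker\sum_{j\in I}x_j$ of $\A_{G_i}$, all spanned by $0/1$-vectors. This already pins down each hyperplane count: $G_1$ is $K_4$ minus an edge with $14$ connected induced subgraphs, $G_2=K_4$ has all $15$, and so on.

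Next, I would compute $\chi(\A_{G_i},t)$ by a standard method: the deletion--restriction recursion of Definition~\ref{def:char_pol}, the Möbius formula $\chi(\A,t)=\sum_{X\in L(\A)}\mu(\hat{0},X)\,t^{\dim X}$ over the intersection lattice (cf.~\cite{OT92}), or the finite-field point count (for a large prime $q$ the value $\chi(\A_{G_i},q)$ counts the points of $\FF_q^{\,n}$ lying off every hyperplane, so evaluating at enough primes and interpolating recovers the polynomial). Each route reduces to elementary linear algebra over $\QQ$ or $\FF_q$ using the explicit $0/1$-vectors above, and returns exactly the tabulated polynomials. I would then run independent cross-checks: $-[t^{n-1}]\chi(\A_{G_i})$ must equal $|\A_{G_i}|$; Zaslavsky's theorem forces $(-1)^{n}\chi(\A_{G_i},-1)$ to be the (positive, even) number of chambers; and since $G_2=K_4$, the answer must coincide with the known rank-$4$ resonance-arrangement polynomial $t^4-15t^3+80t^2-170t+104$, whose value at $-1$ gives the familiar chamber count $370$.

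Finally, for the irreducibility claim I would argue factor by factor. The quadratics $t^2-9t+21$, $t^2-10t+26$, $t^2-10t+29$, $t^2-11t+33$ have discriminants $-3,-4,-16,-11$, all negative, so they are irreducible over $\QQ$. For the cubic $t^3-15t^2+78t-138$ (for $G_4$), a monic integer cubic is reducible over $\QQ$ only if it has an integer root; by the rational root theorem such a root divides $138=2\cdot3\cdot23$, and testing the finitely many divisors shows none works. For the quartics $t^4-23t^3+200t^2-784t+1180$ (for $G_7$) and $t^4-23t^3+200t^2-784t+1188$ (for $G_8$), a monic integer quartic is reducible over $\QQ$ only if it has an integer root or factors as $(t^2+at+b)(t^2+ct+d)$ with $a,b,c,d\in\ZZ$; the rational root theorem rules out integer roots (dividing $1180=2^2\cdot5\cdot59$, resp.\ $1188=2^2\cdot3^3\cdot11$), and running through the finitely many choices of $(b,d)$ with $bd=1180$ (resp.\ $1188$) and checking $a+c=-23$, $ac+b+d=200$, $ad+bc=-784$ shows no integer solution exists. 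Hence every displayed non-linear factor is irreducible over $\QQ$; in particular, by Theorem~\ref{thm:factorization}, none of $\A_{G_1},\dots,\A_{G_8}$ is free.

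The main obstacle is not conceptual but one of volume and reliability: eight separate characteristic-polynomial computations in arrangements of $14$ to $36$ hyperplanes living in $\QQ^4,\dots,\QQ^7$ — best carried out with a computer algebra system — followed by eight elementary but case-heavy irreducibility checks, the quartic factorization tests being the fiddliest. The effort therefore goes into enumerating the connected induced subgraphs without error, verifying each polynomial against the hyperplane count and the Zaslavsky sign, and keeping the finite set of cases in the quartic test straight.
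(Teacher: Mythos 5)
Your proposal is correct and takes essentially the same route as the paper, whose proof is a one-line appeal to a deletion--restriction or computer computation (via the \texttt{julia} package of~\cite{BEK_resonance}); your finite-field/M\"obius alternatives and the consistency checks against $|\A_{G_i}|$ and Zaslavsky's theorem are sound variants of the same finite verification, and your numbers (e.g.\ $14$ hyperplanes for $G_1$, the resonance polynomial $t^4-15t^3+80t^2-170t+104$ for $G_2=K_4$ with $370$ chambers) match the table. Your explicit discriminant, rational-root, and quadratic-factorization arguments for the irreducibility claim are a welcome addition, since the paper merely asserts that fact without proof.
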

	\begin{proof}
	These polynomials are obtained for example via a tedious computation using deletion-restriction or with a computer using for instance the \texttt{julia} package described in~\cite{BEK_resonance}.
	\end{proof}

	\begin{proposition}\label{prop:small_graphs}
		Let $G$ be a connected graph on at most $7$ nodes that contains at least one of the graphs
		$G_1,G_3,G_6,G_7,G_8$ of Figure~\ref{fig:8graphs} as a subgraph.
		Then $\A_G$ is not free.
	\end{proposition}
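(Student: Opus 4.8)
\emph{Proof plan.} The mechanism is Terao's Factorization Theorem together with the locality of freeness. First I would observe that none of the eight arrangements in Proposition~\ref{prop:8_graphs} is free: by that proposition each $\chi(\A_{G_i},t)$ has a factor of degree at least two which is irreducible over $\QQ$, so $\chi(\A_{G_i},t)$ is not a product of linear polynomials, whereas if $\A_{G_i}$ were free then Theorem~\ref{thm:factorization} would force $\chi(\A_{G_i},t)=\prod_{j=1}^{\ell}(t-d_j)$ with the $d_j$ the (nonnegative integer) exponents. Hence $\A_{G_1},\dots,\A_{G_8}$ are all non-free.

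Second, freeness is a local property \cite[Thm.~4.37]{OT92}, so by Lemma~\ref{lem:induced_subgraph} (and also Lemma~\ref{lem:edge_contraction}) the \csa of any induced subgraph of $G$ — more generally, of any graph obtained from $G$ by vertex deletions and edge contractions — is free whenever $\A_G$ is. Contrapositively, to see that $\A_G$ is not free it suffices to exhibit one such graph whose \csa is non-free.

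So let $G$ be connected on at most $7$ vertices and containing $G_i$ as a subgraph for some $i\in\{1,3,6,7,8\}$, and set $S:=V(G_i)$. Then $G[S]$ is a connected graph on at most $7$ vertices containing $G_i$ as a spanning subgraph. If $G[S]\cong G_i$, then $\A_{G[S]}$ is non-free by the first step, hence so is $\A_G$ by the second. If instead $G[S]$ contains $G_i$ properly, I would verify by a finite inspection — in the same spirit, and by the same computational means, as Proposition~\ref{prop:8_graphs} — that every such $G[S]$ is non-free: it is isomorphic to one of $G_1,\dots,G_8$, or it contains one of them as an induced subgraph, or its characteristic polynomial (computed as in Proposition~\ref{prop:8_graphs}) again has an irreducible factor of degree at least two. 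In every case $\A_{G[S]}$ is non-free, whence $\A_G$ is non-free. This is the role played by the three graphs $G_2,G_4,G_5$, which do not occur in the statement: they are the non-free graphs obtained from $G_1,G_3,G_6$ by adding edges, and they ``close up'' the case analysis.

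The routine content is Terao's theorem plus locality; the real work is the last step — a finite but laborious verification that one cannot leave the family of graphs with non-free \csa by adding edges to the five minimal obstruction graphs while staying within $7$ vertices and keeping connectivity. I expect this enumeration to be the main obstacle, and it is best carried out by computer, exactly as the characteristic polynomials in Proposition~\ref{prop:8_graphs}.
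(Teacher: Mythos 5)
Your proposal is correct and follows essentially the same route as the paper: non-freeness of the eight obstruction arrangements via Terao's Factorization Theorem and the irreducible factors from Proposition~\ref{prop:8_graphs}, locality of freeness through Lemmas~\ref{lem:induced_subgraph} and~\ref{lem:edge_contraction}, and then a finite check that the induced subgraph on the vertices of the embedded $G_i$ (i.e.\ $G_i$ with edges added) still yields a non-free \csa. The paper handles that last finite check the same way you do — by asserting that every edge-addition still contains one of the eight graphs as an induced subgraph after contractions, illustrated only by a single example — so your deferral of the case analysis to a routine (computer-assisted) verification matches the paper's level of detail.
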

	\begin{proof}
        Let $G$ be a connected graph on at most $7$ nodes that contains at least a graph $H\in\{G_1,G_3,G_6,G_7,G_8\}$ from Figure \ref{fig:8graphs} as a subgraph.
        Since localizations of free arrangements are free, by Terao's factorization theorem~\ref{thm:factorization} and Lemmas \ref{lem:induced_subgraph}, \ref{lem:edge_contraction} it suffices to show that $G$ contains an induced subgraph $H'$ with possibly contracted edges such that the characteristic polynomial of $\A_{H'}$ has non integral roots.
        
        But adding edges to $G_1,G_3,G_6,G_7,G_8$ always yields graphs which contain a graph of Figure~\ref{fig:8graphs} as an induced graph after possibly contracting edges. Instead of going through all cases, we demonstrate the argument in an example case: consider adding an edge to $G_6$ from the middle vertex to the vertex at the bottom left. Then contracting the edge on the right yields the graph $G_1$.
        
        Thus the subgraph of $H$ in $G$ contains an induced graph $H'$ with possibly contracted edges as in  Figure~\ref{fig:8graphs} and we are finished.
	\end{proof}
	
	This proposition together with the above graph-manipulation-lemmas yields a classification for all graphs:
	\begin{theorem}\label{thm:classification}
		Let $\A_G$ be a free \csa of a connected graph $G$.
		Then $G$ is a path, a cycle, an almost path or a path-with-triangle-graph.
	\end{theorem}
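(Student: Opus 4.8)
The plan is to prove the contrapositive: if $G$ is a connected graph that is not a path, a cycle, an almost path, or a path-with-triangle graph, then $\A_G$ is not free. Everything hinges on the fact that freeness is a local property. By Lemmas~\ref{lem:induced_subgraph} and~\ref{lem:edge_contraction}, if $\A_G$ were free then $\A_H$ would be free for every graph $H$ obtained from $G$ by passing to an induced subgraph and then contracting an arbitrary (possibly empty) set of edges, and Terao's Factorization Theorem~\ref{thm:factorization} would force $\chi(\A_H)$ to factor into linear terms with nonnegative integer roots. Combining this with Propositions~\ref{prop:8_graphs} and~\ref{prop:small_graphs}, the whole statement reduces to a finite, purely combinatorial assertion: for every connected $G$ outside the four families there is such an $H$, on at most seven vertices, that either equals one of the obstruction graphs of Figure~\ref{fig:8graphs} or contains one of $G_1,G_3,G_6,G_7,G_8$ as a subgraph. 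So I would spend the proof exhibiting this $H$, case by case, according to the shape of $G$.

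First I would treat the case that $G$ is a tree. Since $G$ is not a path it has a branch vertex. If some vertex has degree $\ge 4$, that vertex together with four of its neighbours induces $K_{1,4}$ (there are no edges among the neighbours, as $G$ is a tree). If $G$ has at least two branch vertices, I would contract the branch-free path joining two closest ones: the resulting vertex has degree $\ge 4$, and restricting to it and four of its neighbours again yields $K_{1,4}$. If $G$ has exactly one branch vertex, necessarily of degree $3$, then since $G$ is not an almost path all three of its legs have length $\ge 2$, and restricting to the branch vertex together with the first two vertices of each leg gives the spider $S(2,2,2)$. Both $K_{1,4}$ and $S(2,2,2)$ are obstruction graphs, so the tree case is finished.

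Next I would treat the case that $G$ contains a cycle. If $G$ has cyclomatic number at least two, I would choose two cycles meeting in at most a path, contract any connecting path and shrink both cycles, arriving at two triangles sharing an edge or a single vertex, i.e.\ the diamond $K_4-e$ or the bowtie; these are obstruction graphs. If $G$ has exactly one cycle $C$: when $C$ has length $\ge 4$ it is chordless (a chord would yield a shorter cycle), so connectivity forces a vertex off $C$ adjacent to $C$, and after restricting to $C$ and that vertex and contracting $C$ down to a $4$-cycle one obtains a $4$-cycle with a pendant, which is an obstruction. When $C$ is a triangle, $G$ is a triangle with an attached forest; since $G$ is not a path-with-triangle graph, either the forest meets all three triangle vertices --- then restricting to the triangle plus one outside neighbour at each vertex yields the net graph --- or it meets one or two vertices but is not a disjoint union of paths, in which case contracting the initial part of an offending tail and restricting produces a small obstruction (for instance $K_{1,4}$ with one extra edge between two of its leaves). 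In all cases $H$ has at most seven vertices and contains a forbidden subgraph, which completes the reduction.

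The hard part will be making this case analysis genuinely exhaustive and carrying out the reductions without error. The delicate points are: (i) graphs that combine a cycle with branching or with several pendant paths, where the choice of which edges to contract matters --- contracting too aggressively can land one inside one of the four families (for instance, a $4$-cycle with a pendant contracts one step too far to the paw $\Delta_{3,1}$, which is free), so one must stop at the right small non-free graph; (ii) verifying, via the characteristic-polynomial computations behind Proposition~\ref{prop:8_graphs}, that each of the minimal configurations reached really is non-free, citing Proposition~\ref{prop:small_graphs} where the configuration contains $G_1,G_3,G_6,G_7$, or $G_8$ and Proposition~\ref{prop:8_graphs} together with Theorem~\ref{thm:factorization} for the remaining ones; and (iii) the converse bookkeeping, namely confirming that the four families are precisely the connected graphs avoiding all of these configurations. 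I expect (i) to require the most care, since it is where a misstep would actually break the proof rather than merely complicate it.
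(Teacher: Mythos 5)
Your strategy is the same as the paper's: use the locality lemmas (Lemmas~\ref{lem:induced_subgraph}, \ref{lem:edge_contraction}) together with Terao's Factorization Theorem and the finite list of obstruction graphs of Figure~\ref{fig:8graphs} (Propositions~\ref{prop:8_graphs}, \ref{prop:small_graphs}), and then run a structural case analysis on $G$ (degree bound via $K_{1,4}=G_3$, two cycles via the diamond $G_1$/bowtie, long cycles via $G_6$, the triangle case via the net $G_7$, trees via the spider $G_8$). This is essentially the paper's proof, with the cases organized slightly differently.

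However, as literally written your triangle case is not exhaustive, and this is exactly the spot you flagged as dangerous. You split into ``the attached forest meets all three triangle vertices'' versus ``it meets one or two vertices but is not a disjoint union of paths.'' Take $G$ to be a triangle $abc$ with an edge from $a$ to the middle vertex $m$ of a path $p\text{--}m\text{--}q$ (or, similarly, two pendant paths both attached at the same triangle vertex). Here $G$ minus the triangle \emph{is} a disjoint union of paths and meets only one triangle vertex, so neither branch of your dichotomy fires, yet $G$ is not a path-with-triangle graph. The configuration is genuinely non-free, but one does not see an obstruction by restriction alone: one must contract the path from the off-triangle branch vertex towards the triangle to create a vertex of degree $\ge 4$ (your ``$K_{1,4}$ plus an edge between two leaves'' picture), which is how the paper handles it --- it first bounds all degrees of $G$ by three using $G_3$, and then, in the unique-triangle case, contracts the edges between any further degree-three vertex and the triangle. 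So the missing case is repairable with tools you already have, but the dichotomy must be restated (each attached tree is a pendant path, at most one per triangle vertex, attached at an endpoint, and at most two triangle vertices carry attachments) before the argument is complete.
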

	\begin{proof}
		We proceed in several steps:
		\begin{enumerate}
			\item Assume $G$ has a node $v$ with at least four neighbors $v_1,\dots,v_4$
			and set $S=\{v,v_1,\dots,v_4\}$.
			By Lemma~\ref{lem:induced_subgraph} the \csa $\A_{G\left[S\right]}$ is also free, but this contradicts Proposition~\ref{prop:small_graphs} as $G\left[S\right]$ has $G_3$ in Figure~\ref{fig:8graphs} as a subgraph.
			Therefore all nodes in $G$ are of degree at most three.
			\item Assume $G$ has at least two cycles $C_1,C_2$ of length at least three that share at least one edge $\{v_1,v_2\}$.
			Let $w_i$ be a node on $C_i$ for $1\le i\le 2$ that is different from $v_1,v_2$ and that does not lie on the other cycle.
			These nodes exists as the two cycles are different and of length at least three.
			Contracting all edges on both cycles that involve nodes different from $v_1,v_2,w_1,w_2$ and then taking the induced subgraph on these four nodes yields a graph that has the graph $G_1$ in Figure~\ref{fig:8graphs} as a subgraph, where $\{v_1,v_2\}$ is the middle edge.
			The Lemmas~\ref{lem:induced_subgraph} and~\ref{lem:edge_contraction} yield that the associated \csa is free which contradicts Proposition~\ref{prop:small_graphs}.
			\item Assume $G$ has at least two cycles of length at least three that don't share an edge.
			In this case, one can obtain a graph as an induced subgraph after suitable edge contractions that contains the graph $G_3$ in Figure~\ref{fig:8graphs} as a subgraph.
			This is a contradiction to Proposition~\ref{prop:small_graphs} by a similar argument as above.
			Therefore, $G$ can have at most one cycle.
			\item Assume $G$ has a cycle of length at least four.
			Using similar arguments as before applied to the graph $G_6$ from Figure~\ref{fig:8graphs} now yields that the graph $G$ must be actually equal to a cycle graph which is one of the cases in our classification.
			So from now assume that $G$ has at most one cycle and that cycle has length three.
			\item Now let $v_1,v_2,v_3$ be the unique cycle in $G$. Assume that all nodes $v_1,v_2,v_3$ are of degree three. Taking the induced subgraph of $v_1,v_2,v_3$ together with their other neighbor each yields the graph $G_7$ from Figure~\ref{fig:8graphs} since all neighbors are distinct as $v_1,v_2,v_3$ is the only cycle in $G$.
			Using as above Lemma~\ref{lem:induced_subgraph} this contradicts Proposition~\ref{prop:small_graphs}.
			Therefore we can assume that the node $v_3$ has degree two.
			
			If $G$ had a node $v$ of degree three different from $v_1$ and $v_2$, then one could obtain a graph with a node of degree at least four by contracting all edges between $v$ and $v_1$ or $v$ and $v_2$.
			This again contradicts Proposition~\ref{prop:small_graphs} as the contracted graph has an induced subgraph that contains $G_3$ from Figure~\ref{fig:8graphs}.
			Therefore, all other nodes have degree at most two and as $v_1,v_2,v_3$ is its only cycle, $G$ must be a path-with-triangle-graph.
			\item Now assume $G$ is a tree.
			If $G$ had two nodes $v_1,v_2$ of degree three, one could again contract all edges between $v_1,v_2$ and obtain a graph with a node of degree four.
			This is impossible as shown in the previous step.
			Therefore there exists at most one node of degree three.
			If all degrees are two, $G$ is a path and we're done.
			
			So say $v$ is the unique node of degree three.
			Therefore $G$ is a graph consisting of three disjoint paths that are glued together in the node $v$.
			The obstruction graph $G_8$ in Figure~\ref{fig:8graphs} ensures that at least one of these paths is in fact just an edge.
			Therefore, $G$ is an almost path in our sense.
		\end{enumerate}
		We  have now exhausted all cases and found that $\A_G$ can only be free if $G$ is a path, cycle, almost path or path-with-triangle-graph.
		We previously showed that $\A_G$ is indeed free in these four cases.
	\end{proof}

	\section{Simplicial arrangements}\label{sec:simplicial}
	
	\subsection{Simplicial arrangements}
	\begin{definition}\label{def:simp} An arrangement $\A$ in $\QQ^n$ is called \df{simplicial} if the complement
		$\QQ^n\setminus \bigcup_{H\in \A}H$ decomposes into open simplicial cones.
	\end{definition}
	Simpliciality is a local property (see Subsection \ref{subsec:loc_prop}), hence we may investigate simplicial \csas using the Lemmas \ref{lem:induced_subgraph} and \ref{lem:edge_contraction}:
	\begin{theorem}\label{thm:ss_p}
		Let $G$ be a connected graph.
		Then the \csa $\A_G$ is simplicial if and only if $G$ is a triangle or a path graph.
	\end{theorem}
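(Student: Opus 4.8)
The plan is to treat the two implications separately; sufficiency is immediate and necessity is where the work lies. For sufficiency, if $G=P_n$ is a path then by Example~\ref{typeA} the \csa $\A_G$ is, after a linear change of coordinates, the reflection arrangement of type $A_n$, whose chambers are simplicial cones; and if $G=C_3$ is a triangle then $\A_G$ is the arrangement of the seven nonzero $0/1$-vectors in $\QQ^3$ of Figure~\ref{fig:AC3}, which one checks directly to be simplicial (it is one of the classical rank-three simplicial arrangements).

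For necessity, suppose $\A_G$ is simplicial. Simpliciality is a local property (Subsection~\ref{subsec:loc_prop}), so by Lemmas~\ref{lem:induced_subgraph} and~\ref{lem:edge_contraction} the \csas $\A_{G[S]}$ of every induced subgraph and $\A_{G/e}$ of every edge contraction of $G$ are again simplicial. The first step is to record a short list of \emph{non-simplicial} obstructions: I claim that each of $\A_{C_4}$, $\A_{K_{1,3}}$ (the claw, $=A_{3,2}$), $\A_{\Delta_{3,2}}$ (the paw, i.e.\ a triangle with one pendant edge), and $\A_{G_1}$ (the diamond $K_4-e$ from Figure~\ref{fig:8graphs}) is \emph{not} simplicial. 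Each of these I would establish by writing down the chambers explicitly, a finite and routine computation that can also be carried out with the software of~\cite{BEK_resonance}, and exhibiting a chamber which is not a simplicial cone. It follows that the \csas of $G_1,G_3,G_6,G_7,G_8$ from Figure~\ref{fig:8graphs} are all non-simplicial as well, since $G_1$ is the diamond, the star $G_3$ and the spider $G_8$ contain an induced claw, the net $G_7$ contains an induced paw, and $G_6$ contains a cycle of length at least four and hence contracts to a graph with an induced $C_4$.

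With these obstructions in hand I would run the case analysis in the proof of Theorem~\ref{thm:classification} verbatim, with "simplicial" in place of "free": in each branch a connected graph that is not a path, a cycle, an almost path, or a path-with-triangle graph is shown to reduce, via induced subgraphs and edge contractions, to one of $G_1,G_3,G_6,G_7,G_8$, whose \csas are non-simplicial by the previous paragraph; hence $\A_G$ simplicial forces $G$ into one of those four families. It then remains to eliminate all members of these families except the paths and $C_3$: for $C_n$ with $n\ge 4$ one contracts $n-4$ edges to obtain $C_4$; for an almost path $A_{n,k}$ the vertex $k$ together with its three neighbours $k-1$, $k+1$, $n+1$ induces a claw; and for a path-with-triangle graph $\Delta_{n,k}$ different from $\Delta_{2,1}=C_3$ a degree-three path-vertex on the triangle together with its three neighbours induces a paw. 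In each case $\A_G$ is then non-simplicial, so $\A_G$ simplicial implies $G$ is a path or a triangle, which together with the sufficiency direction completes the proof.

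The one genuinely non-formal ingredient is the verification that the four small \csas $\A_{C_4}$, $\A_{K_{1,3}}$, $\A_{\Delta_{3,2}}$, $\A_{G_1}$ are not simplicial: this requires actually producing a chamber with too many walls rather than a soft argument, and is the step I expect to be the main obstacle. Everything else is either invoked from earlier in the paper (the locality lemmas, Theorem~\ref{thm:classification}, Example~\ref{typeA}) or a straightforward bookkeeping check that copies the structure of the proof of Theorem~\ref{thm:classification}.
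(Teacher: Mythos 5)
Your proposal has a genuine gap: the obstruction list is incomplete. When a vertex $v$ has three neighbours, the induced subgraph on $v$ and those neighbours is a claw, a paw, a diamond, \emph{or} $K_4$, and your four obstructions $\A_{C_4}$, $\A_{K_{1,3}}$, $\A_{\Delta_{3,2}}$, $\A_{G_1}$ cannot reach the last case: every induced subgraph and every edge contraction of a complete graph is again a complete graph, so $K_4$ (and hence $K_n$ for $n\ge 4$, i.e.\ the resonance arrangements) is never excluded by your reduction, yet $\A_{K_4}$ must be shown non-simplicial for the theorem to hold. The paper's proof closes exactly this case: Figure~\ref{fig:branch} lists \emph{all} possible localizations at a branching (claw, paw, diamond, and $K_4$), and each is checked to be non-simplicial via the numerical criterion \eqref{eq:simplicial} of Cuntz--Geis, which needs only $\chi(\A,-1)$ and the $\chi(\A^H,-1)$ and is far cheaper than your plan of enumerating chambers in rank $4$ and $5$; the remaining graphs have maximum degree two, and cycles of length at least four are contracted to $C_4$, which fails \eqref{eq:simplicial} as well.

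A second, related problem is your plan to rerun the proof of Theorem~\ref{thm:classification} ``verbatim'' with simplicial in place of free. That proof does not reduce to induced (possibly contracted) copies of $G_1,G_3,G_6,G_7,G_8$; it reduces to graphs merely \emph{containing} them as subgraphs and then invokes Proposition~\ref{prop:small_graphs}, whose proof rests on the non-integrality of the characteristic polynomials of all eight graphs of Figure~\ref{fig:8graphs} via Terao's factorization theorem --- an argument specific to freeness with no simplicial analogue. So the transfer is not automatic, and the cases where extra edges make neighbourhoods complete (again $K_4$, $K_5$) are precisely where it breaks. The good news is that the detour through the freeness classification is unnecessary: your own final steps (degree-$\ge 3$ vertex gives an induced claw, paw, diamond or $K_4$; otherwise $G$ is a path or cycle; long cycles contract to $C_4$) already constitute the whole argument once $K_4$ is added to the obstruction list and the five small arrangements are verified non-simplicial --- which is essentially the paper's proof, with \eqref{eq:simplicial} doing the verification. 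Your sufficiency direction and the eliminations of $A_{n,k}$ (induced claw at $k$) and $\Delta_{n,k}$ with more than three vertices (induced paw) are fine.
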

	\begin{figure}[ht!]
		\centering
		\includegraphics[width=.8\linewidth]{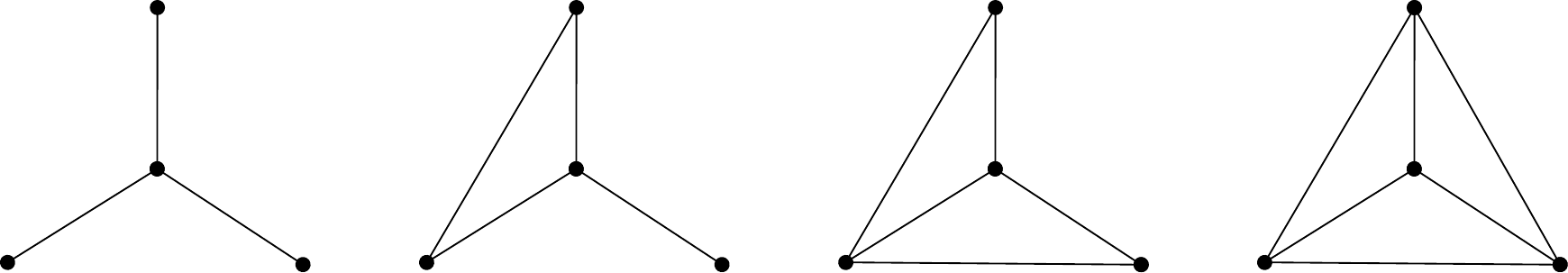}
		\caption{Possible localizations to a branching.}
		\label{fig:branch}
	\end{figure}
	\begin{proof}
		If $G$ is a path graph, then $\A_G$ is the Weyl arrangement of type $A$ which is known to be simplicial.
		If $G$ is a triangle, then $\A_G$ is the arrangement to the root system of the Weyl groupoid of rank three with $7$ positive roots; this is also simplicial (see for example \cite{p-C10}).
		
		Now assume that $G$ has more than three vertices. If $G$ contains some branching with at least three branches, then one of the graphs in Figure \ref{fig:branch} is an induced graph $H$.
		Consider for $H$ the first graph in Figure \ref{fig:branch}, its \csa $\A$ has characteristic polynomial $(t-1)(t-3)^2(t-4)$ (Corollary \ref{cor:charpoly_nk}). By \cite[Cor.\ 2.4]{p-CG-13}, $\A$ is simplicial if and only if
		\begin{equation}\label{eq:simplicial}
		n \chi(\A,-1)+2\sum_{H\in \A} \chi(\A^H,-1) = 0.
		\end{equation}
		The arrangements $\A^H$ are isomorphic to $\A_{P_3}$ if $H$ is a hyperplane corresponding to one of the three leaves in $H$; all other restrictions $\A^H$ are isomorphic to $\A_{C_3}$.
		Since $\chi(\A_{P_3})=(t-1)(t-2)(t-3)$ and $\chi(\A_{C_3})=(t-1)(t-3)^2$, the left side of Equation \eqref{eq:simplicial} gets
		\[ 4\cdot 160 + 2(3\cdot (-24) + 8\cdot (-32)) = -16 \ne 0, \]
		and hence $\A$ is not simplicial.

        We can check in the same way that none of the graphs in Figure \ref{fig:branch} produces a simplicial \csa,
		thus we get a contradiction to Lemma \ref{lem:induced_subgraph}.
		Hence $G$ is a path or a cycle. Assume that it is a cycle of length greater than three. Then after some edge contractions we obtain a cycle of length four. But this is not a simplicial \csa (use Equation \eqref{eq:simplicial}), thus this case is excluded by Lemma \ref{lem:edge_contraction}.
	\end{proof}
	
	\section{Factored and supersolvable arrangements}\label{sec:factored}
    This section contains a classification of factored and supersolvable \csas.
    We begin by defining these two notions.
	\begin{definition}\cite[Def. 2.66]{OT92}\label{def:factored}
		Let $\pi = (\pi_1,\ldots,\pi_s)$ be a partition of $\A$.
		The partition $\pi$ is called \df{independent}, if for any choice $H_i\in \pi_i$ for $1\le i\le s$, the resulting $s$ hyperplanes are linearly independent.
		
		 Let $X \in L(\A)$. The \df{induced partition} $\pi_X$ of $\A_X$ is given by the non-empty blocks of the form $\pi_i\cap \A_X$.		
		The partition $\pi$ is \df{nice} for $\A$ or a \df{factorization} of $\A$ if
		
		\begin{enumerate}
			\item $\pi$ is independent, and
			\item for each $X \in L(\A)\setminus \{V\}$, the induced partition $\pi_X$ admits a block which is a singleton.
		\end{enumerate}
		If $\A$ admits a factorization, then we also say that $\A$ is \df{factored} or \df{nice}.
	\end{definition}
	
	\begin{example}[{\cite[Exp.~2.5]{Ter92}}]\label{ex:triangle_factored}
		The \csa of the triangle graph is factored with the partition 
		\begin{align*}
			\pi_1&=\{H_{\{1,3\}} \},\\
			\pi_2&=\{H_{\{3\}},H_{\{1\}},H_{\{1,2\}} \},\\
			\pi_3&=\{H_{\{2\}},H_{\{2,3\}},H_{\{1,2,3\}} \}.\qedhere
		\end{align*}
	\end{example}
	
	\begin{definition}\label{def:ss}
	    Let $\A$ be an arrangement of rank $r$.
	    \begin{enumerate}
	        \item An $X\in L(\A)$ is \df{modular} if for every $Y\in L(\A)$ it holds that $X+Y\in L(\A)$.
	        \item The arrangement $\A$ is \df{supersolvable} if there exists a chain of modular elements $X_0 < X_1<\dots <X_r$ with $X_i\in L(\A)$ for all $0\le i \le r$.
	    \end{enumerate}
	\end{definition}
	
	The key facts about factored arrangements that we will use are the following:
	\begin{proposition}[{\cite[Cor.~3.88]{OT92}}]\label{prop:factored_chi}
		Let $\A$ be an arrangement with a nice partition $(\pi_1,\dots,\pi_s)$. 
		Then
		\[
		\chi(\A)=\prod_{i=1}^s(t-|\pi_i|).
		\]
	\end{proposition}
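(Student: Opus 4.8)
The plan is to deduce the formula from the structure of the Orlik--Solomon algebra $A(\A)=\bigoplus_{p\ge 0}A_p(\A)$ of $\A$, following Terao \cite{Ter92}. Recall the Orlik--Solomon identity
\[
\chi(\A,t)=t^{\dim_F V}\,P\!\left(A(\A),-t^{-1}\right),\qquad P(A(\A),t):=\sum_{p\ge 0}\dim_F A_p(\A)\,t^p ,
\]
which holds because $\dim_F A_p(\A)=\sum_{X\in L_p(\A)}|\mu(V,X)|$ while $\chi(\A,t)=\sum_{X\in L(\A)}\mu(V,X)\,t^{\dim X}$. Hence it suffices to prove
\[
P(A(\A),t)=\prod_{i=1}^{s}\bigl(1+|\pi_i|\,t\bigr),
\]
equivalently that $\dim_F A_k(\A)$ is the $k$-th elementary symmetric function of $|\pi_1|,\dots,|\pi_s|$ for every $k$: this gives $\chi(\A,t)=t^{\dim_F V-s}\prod_{i=1}^s(t-|\pi_i|)$, which is $\prod_{i=1}^s(t-|\pi_i|)$ once one observes that a nice partition has exactly $s=\rk(\A)=\dim_F V$ blocks (the top nonvanishing degree of $A(\A)$ is $\rk(\A)$, and the top coefficient $\prod_i|\pi_i|$ of the claimed polynomial is nonzero).

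To compute the Poincar\'e polynomial I would exhibit a monomial basis of $A(\A)$ adapted to $\pi$. Write $e_H\in A_1(\A)$ for the degree-one generator attached to $H\in\A$, and fix a linear order on $\A$ refining the block order, so that every hyperplane of $\pi_i$ precedes every hyperplane of $\pi_j$ whenever $i<j$. The plan is to show that the $\pi$-\emph{descending monomials}
\[
e_{H_{i_1}}e_{H_{i_2}}\cdots e_{H_{i_k}},\qquad 1\le i_1<i_2<\dots<i_k\le s,\ \ H_{i_j}\in\pi_{i_j},
\]
form an $F$-basis of $A(\A)$; since the number of these in degree $k$ is exactly the $k$-th elementary symmetric function of $|\pi_1|,\dots,|\pi_s|$, this yields the displayed factorization. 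Spanning is the soft half: using independence of $\pi$ (Definition~\ref{def:factored}(1)) together with the Orlik--Solomon relations, any product of generators is rewritten as a combination of descending ones by induction on the number of out-of-order adjacent pairs. The content is linear independence, which is exactly where Definition~\ref{def:factored}(2)---every induced partition $\pi_X$ has a singleton block---is used.

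For linear independence I would run the induction of \cite{Ter92} (and \cite[Cor.~3.88]{OT92}) on $|\A|$: for a suitably chosen $H_0\in\A$, combine the deletion--restriction short exact sequence
\[
0\longrightarrow A(\A\setminus\{H_0\})\longrightarrow A(\A)\longrightarrow A(\A^{H_0})[-1]\longrightarrow 0
\]
(equivalently the recursion $\chi(\A)=\chi(\A\setminus\{H_0\})-\chi(\A^{H_0})$ of Definition~\ref{def:char_pol}) with the fact that a nice partition of $\A$ induces nice partitions on $\A\setminus\{H_0\}$ and on $\A^{H_0}$ whose block multiplicities fit together so that the two inductive hypotheses produce the desired polynomial. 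I expect the main obstacle to be precisely this inheritance step, together with the choice of $H_0$: the correct nice partition on $\A\setminus\{H_0\}$ is \emph{not} obtained by deleting $H_0$ from its block---already for the triangle $\A_{C_3}$, deleting a single hyperplane yields the braid arrangement $\A_{P_3}$, whose factorization $(1,2,3)$ has different block sizes---so the right combinatorics on the flats of $\A$ (equivalently, the compatibility of $\pi$ with a no-broken-circuit basis of $L(\A)$) is the technical heart, which I would take from \cite{Ter92,OT92} rather than reprove here.
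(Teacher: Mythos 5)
The paper gives no proof of this proposition at all: it is quoted directly from \cite[Cor.~3.88]{OT92}, i.e.\ from Terao's factorization of the Orlik--Solomon algebra, and your outline follows exactly that route and ultimately defers the technical heart to the same sources, so in method you and the paper coincide. Two points in your sketch are nevertheless inaccurate. The more serious one is your division of labour between the two conditions of Definition~\ref{def:factored}: spanning by the $\pi$-descending monomials does \emph{not} follow from independence together with the Orlik--Solomon relations, because those relations only see dependent sets and independence supplies no dependencies. To rewrite $e_He_{H'}$ with $H\neq H'$ in the same block you must invoke the singleton condition: localizing at $X=H\cap H'$, the induced partition $\pi_X$ has a singleton block, which cannot be the block containing both $H$ and $H'$, so some $H''$ in a different block satisfies $X\subseteq H''$, and the circuit $\{H,H',H''\}$ gives $e_He_{H'}=e_He_{H''}-e_{H'}e_{H''}$ (this is precisely the mechanism of Lemma~\ref{lem:factored} in the paper). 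That independence alone is insufficient is seen already for a generic arrangement of four planes in rank three with blocks of sizes $1,1,2$: the partition is independent, yet $\dim A_2=6$ while there are only five descending monomials of degree two, so they cannot span.

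The second point is your assertion that $s=\rk(\A)=\dim_F V$. Niceness does force $s=\rk(\A)$ by the degree count you indicate, but it does not force essentiality, so in general the factorization of the Poincar\'e polynomial only yields $\chi(\A,t)=t^{\dim_F V-\rk(\A)}\prod_{i=1}^s(t-|\pi_i|)$. The formula as stated thus tacitly assumes $\A$ essential; this is harmless in the paper because every \csa contains the hyperplanes $\ker x_i$ and is therefore essential, but it should be said rather than asserted. With these two repairs your sketch is the standard argument of \cite{Ter92}, which is all the paper itself invokes.
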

	\begin{proposition}[{\cite[Prop.~2.67]{OT92}}]\label{prop:factored_ss}
		Let $\A$ be a supersolvable arrangement with a maximal chain of modular elements $X_0<X_1<\dots < X_r$. Then the partition $(\pi_1,\dots,\pi_r)$ with $\pi_i = \A_{X_i}\setminus A_{X_{i-1}}$ for $1\le i \le r$ is a factorization of $\A$.
	\end{proposition}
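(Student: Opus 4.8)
The plan is to prove Proposition~\ref{prop:factored_ss} by induction on the rank $r$ of $\A$, the crucial feature being that $X_{r-1}$ is a \emph{modular coatom} of $L(\A)$. The base cases $r=0,1$ are immediate: for $r=1$ the chain induces the single block $\A_{X_1}=\{X_1\}$ (independent), and the only non-trivial flat is $X_1$ itself, whose induced partition is again the singleton $\{X_1\}$.

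For the inductive step I would first record the routine structural facts about $\A':=\A_{X_{r-1}}$: it has rank $r-1$, its center is $X_{r-1}$, its intersection lattice is the principal ideal $L(\A')=\{Y\in L(\A)\mid Y\le X_{r-1}\}$, every $X_i$ with $i\le r-1$ is still modular in $L(\A')$, and $\A'_{X_i}=\A_{X_i}$ for $i\le r-1$ since $X_{r-1}\subseteq X_i$. Thus $X_0<\dots<X_{r-1}$ is a modular chain for $\A'$ whose induced partition is exactly $(\pi_1,\dots,\pi_{r-1})$, and by the inductive hypothesis this is a factorization of $\A'$. The partition $\pi$ of $\A$ arises from it by adjoining the one new block $\pi_r=\A\setminus\A'$ (all $\pi_i$ are nonempty, as distinct flats have distinct localizations).

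It then remains to verify the two conditions of Definition~\ref{def:factored} for $\pi$. For independence, choose a transversal $H_i\in\pi_i$. By induction $H_1,\dots,H_{r-1}$ are independent, so $H_1\cap\dots\cap H_{r-1}$ is a rank-$(r-1)$ flat of $\A'$ containing its center $X_{r-1}$, hence equals $X_{r-1}$; since $H_r\in\pi_r$ does not contain $X_{r-1}$, adjoining it strictly lowers the dimension, so $H_1,\dots,H_r$ are independent. For the singleton condition, fix $X\in L(\A)\setminus\{V\}$. If $X\le X_{r-1}$, every hyperplane through $X$ also contains $X_{r-1}$, so $\A_X\subseteq\A'$ and $\pi_X=\pi'_X$, which has a singleton block by induction. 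If $X\not\le X_{r-1}$ and $\rk(X)=1$, then $\A_X=\{X\}$ is already a singleton. Otherwise $X\not\le X_{r-1}$ and $\rk(X)\ge2$, and this is where modularity is used: the meet $W:=X\wedge X_{r-1}$ equals $X+X_{r-1}$ by modularity of $X_{r-1}$, hence lies in $L(\A)$, and $W\le X_{r-1}$ puts $W$ in $L(\A')$; the modular law together with $X\vee X_{r-1}=X_r$ (forced since $X_{r-1}$ is a coatom and $X\not\le X_{r-1}$) yields $\rk(W)=\rk(X)-1\ge1$, so $W\ne V$. By induction $\pi'_W$ has a singleton block $\{H_0\}=\pi_j\cap\A_W$ with $j\le r-1$. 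Because $\pi_j\subseteq\A'$ and a hyperplane contains both $X$ and $X_{r-1}$ if and only if it contains $W$, we get $\pi_j\cap\A_X=\pi_j\cap(\A_X\cap\A_{X_{r-1}})=\pi_j\cap\A_W=\{H_0\}$, a singleton block of $\pi_X$.

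I expect the real obstacle to be exactly this last case. The tempting shortcut --- that the newly added block $\pi_r\cap\A_X$ is always a singleton --- is false: already for the rank-three braid arrangement $\A_{P_3}$ with modular chain $\emptyset<\ker(x_1-x_2)<\{x_1=x_2=x_3\}<\{x_1=x_2=x_3=x_4\}$, the flat $X=\{x_1=x_2=x_4\}$ has $\pi_r\cap\A_X=\{\ker(x_1-x_4),\ker(x_2-x_4)\}$ of size two, while the singleton lives in $\pi_1$. The point is to realise that one should instead look for the singleton block among the old blocks, after replacing $X$ by $W=X+X_{r-1}$, and that modularity of $X_{r-1}$ is precisely what makes $W$ a flat of $\A'$; the rest (the properties of $\A_{X_{r-1}}$, the independence count, the rank arithmetic) is routine.
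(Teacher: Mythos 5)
Your proof is correct. Note that the paper does not prove this statement at all — it is quoted from [OT92, Prop.~2.67] — and your argument (induction on the rank via the modular coatom $X_{r-1}$: localize at $X_{r-1}$ to apply the inductive hypothesis, get independence from $H_1\cap\dots\cap H_{r-1}=X_{r-1}$, and for the singleton condition replace $X$ by the flat $W=X+X_{r-1}$, which is where modularity enters) is essentially the standard textbook proof; all the individual steps, including the rank computation $\rk(W)=\rk(X)-1$ and the identity $\pi_j\cap\A_X=\pi_j\cap\A_W$ for $j\le r-1$, check out.
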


	If an arrangement $\A$ has a factorization $\pi=(\pi_1,\dots,\pi_s)$ then for $X\in L(\A)$ the nonzero blocks of the induced partition $\pi_X$ form a factorization of $\A_X$, see also the proof of Cor.~3.90 in~\cite{OT92}.
	Therefore, being factored is a local property in the sense as defined in Section~\ref{sec:converse}.

	To prove that some arrangements are not factored we introduce a notion of conflicting sets.
	This definition gives a set-theoretic description when the localization $\A_{H_A\cap H_B}$ only contains the two hyperplanes $H_A$ and $H_B$ in $\A$. This will be explained in Lemma~\ref{lem:factored}.
	\begin{definition}\label{def:conflicting}
		Let $\A$ be a $0/1$-arrangement in $F^n$ and let $H_A,H_B\in \A$ be two different hyperplanes in $\A$ with $A,B\subseteq \left[ n \right]$.
		We call $A$ and $B$ with $A\neq B$ \df{conflicting} if one of these two conditions hold:
		\begin{enumerate}
			\item $A\cap B \neq \emptyset$, $A\not \subseteq B$, and $B\not \subseteq A$.
			\item Suppose $A\cap B=\emptyset$, $A\subset B$, or $B \subset A$.
			Then the hyperplane $H_{A\Delta B}$ is not in $\A$, where $A\Delta B$ is the symmetric difference of the sets $A,B$. 
		\end{enumerate}
		These conditions ensure that there is no circuit-triple involving $H_A$ and $H_B$ in $\A$.
	\end{definition}
	\begin{lemma}\label{lem:factored}
			Suppose $\A$ is a $0/1$-arrangement with a nice partition $\pi$ and let $H_A,H_B\in \A$ with conflicting sets $A,B$.
			Then the hyperplanes $H_A,H_B$ are in different blocks of the partition $\pi$.
	\end{lemma}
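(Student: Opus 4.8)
The plan is to argue by contradiction: suppose $H_A$ and $H_B$ lie in the same block $\pi_i$ of the nice partition $\pi$. First I would recall that the defining characteristic of conflicting sets, as spelled out in Definition~\ref{def:conflicting}, is that there is no circuit-triple in $\A$ that involves both $H_A$ and $H_B$; I would want to record this as a preliminary observation (essentially immediate from Lemma~\ref{lem:triples}, checking the two cases of the definition). In case (1), any third set $C$ with $\{A,B,C\}$ a circuit-triple would have to be a disjoint union of two of the three, but $A\cap B\neq\emptyset$ rules out $C = A\dot\cup B$, and $A\not\subseteq B$, $B\not\subseteq A$ rule out the other two possibilities. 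In case (2), the only candidate for a circuit-triple partner is $H_{A\Delta B}$, which by hypothesis is not in $\A$.

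Next I would consider the flat $X := H_A\cap H_B \in L(\A)$ and its localization $\A_X$. The key point is that the absence of a circuit-triple involving $H_A$ and $H_B$ forces $\A_X = \{H_A, H_B\}$: indeed, if some $H_C\in\A$ with $C\neq A,B$ satisfied $X\subseteq H_C$, then $H_A\cap H_B\cap H_C$ would have rank $2$, so by Lemma~\ref{lem:triples} the triple $\{A,B,C\}$ would be a circuit-triple involving both $H_A$ and $H_B$, contradicting the preliminary observation. (Here I should note $\rk(H_A\cap H_B) = 2$ since $A\neq B$ means $\chi_A,\chi_B$ are linearly independent $0/1$-vectors.)

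Finally I would invoke the definition of a nice partition. Since $\pi$ is nice and $X\in L(\A)\setminus\{V\}$, the induced partition $\pi_X$ of $\A_X = \{H_A,H_B\}$ must have a block that is a singleton. But if $H_A$ and $H_B$ are both in $\pi_i$, then $\pi_i\cap\A_X = \{H_A,H_B\}$ is the unique nonempty block of $\pi_X$, and it has two elements, not one — so $\pi_X$ has no singleton block, contradicting niceness. Therefore $H_A$ and $H_B$ lie in different blocks of $\pi$, as claimed.

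I do not expect a serious obstacle here; the only place requiring a little care is the verification of the preliminary observation (that conflicting sets admit no circuit-triple partner in $\A$), which is a short finite case analysis matching the two clauses of Definition~\ref{def:conflicting} against the three ways a circuit-triple can arise in Lemma~\ref{lem:triples}. Everything else is a direct translation between the set-theoretic language and the lattice-theoretic definition of factoredness.
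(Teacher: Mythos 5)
Your proposal is correct and follows essentially the same route as the paper: localize at $X=H_A\cap H_B$, use the conflicting condition together with Lemma~\ref{lem:triples} to conclude $\A_X=\{H_A,H_B\}$, and then apply the singleton-block requirement of a nice partition. The only difference is that you spell out the small case analysis showing conflicting sets admit no circuit-triple partner, which the paper treats as immediate from Definition~\ref{def:conflicting}.
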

	\begin{proof}
		Set $X=H_A\cap H_B$.
		As the sets $A,B$ are conflicting, Lemma~\ref{lem:triples} implies $\A_X=\{ H_A,H_B\}$.
		Then by definition the induced partition $\pi_X$ must also contain a singleton block which means that $H_A$ and $H_B$ must be in different blocks of the partition $\pi$.
	\end{proof}
	
	\begin{proposition}\label{prop:not_factored}
		The \csas of the following graphs are not factored:
		\begin{enumerate}
			\item The almost path graph $A_{3,2}$ (a star graph with central node $2$ and outer nodes $1,3,4$).
			\item The square graph $C_4$ (cycle on $4$ nodes).
		\end{enumerate}
	\end{proposition}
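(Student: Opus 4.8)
The plan is to use Lemma~\ref{lem:factored} as the central obstruction: for each of the two arrangements I will exhibit a collection of pairs of hyperplanes that are forced (by conflicting sets) to lie in distinct blocks of any hypothetical factorization, and then argue that this forces a factorization into more than $\rk(\A)$ blocks, contradicting independence --- or alternatively that the block sizes it forces are incompatible with Proposition~\ref{prop:factored_chi} since the characteristic polynomials do not factor over $\ZZ$ appropriately. I would first record the relevant connected subsets and characteristic polynomials: for $A_{3,2}$ the graph is the star $K_{1,3}$ with center $2$, so the connected subsets are the four singletons, the three edges $\{1,2\},\{2,3\},\{2,4\}$, the three paths $\{1,2,3\},\{1,2,4\},\{2,3,4\}$, and the whole set $\{1,2,3,4\}$, giving $11$ hyperplanes in $\QQ^4$; by Corollary~\ref{cor:charpoly_nk} (with $n=3$, $k=2$) we have $\chi(\A_{A_{3,2}},t)=(t-1)(t-3)^2(t-4)$. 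For $C_4$ the connected subsets are the four singletons, the four edges, the four paths on three vertices, and the whole set, again $12$? no --- count: $4+4+4=12$ plus the full set is $13$; wait, the full set on a $4$-cycle is connected, so $4+4+4+1=13$, but actually the two "opposite pairs" $\{1,3\},\{2,4\}$ are disconnected, so we get only $4$ two-element connected subsets, hence $4+4+4+1 = 13$ hyperplanes; by Proposition~\ref{prop:cycle} its characteristic polynomial is $(t-1)(t-4)^3$.

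For part~(1), consider in $A_{3,2}$ the three edge-hyperplanes $H_{\{1,2\}}$, $H_{\{2,3\}}$, $H_{\{2,4\}}$. Any two of these, say $\{1,2\}$ and $\{2,3\}$, intersect in $\{2\}$, neither contains the other, so they are conflicting of type~(1); by Lemma~\ref{lem:factored} they lie in different blocks. Now also bring in the singleton $H_{\{2\}}$: the sets $\{2\}$ and $\{1,2\}$ are nested with symmetric difference $\{1\}$, and $H_{\{1\}}\in\A$, so they are \emph{not} conflicting --- so this particular pair is not immediately an obstruction. I would instead look for a larger conflicting family. The cleanest route is: the four hyperplanes $H_{\{1,2\}},H_{\{2,3\}},H_{\{2,4\}},H_{\{1,2,3,4\}}$ are pairwise conflicting (each pair of distinct edges is type~(1); the full set contains each edge but $\{1,2,3,4\}\Delta\{1,2\}=\{3,4\}$ is disconnected so $H_{\{3,4\}}\notin\A$, type~(2)), hence any factorization must place these four hyperplanes in four distinct blocks. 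Since $\rk(\A_{A_{3,2}})=4$, a factorization has exactly $4$ blocks $\pi_1,\dots,\pi_4$, one containing each of these four hyperplanes. Then Proposition~\ref{prop:factored_chi} forces $\{|\pi_1|,\dots,|\pi_4|\}=\{1,3,3,4\}$ as a multiset. I would then derive a contradiction by examining a suitable $X\in L(\A)$ of rank $2$ or $3$: choose $X$ so that $\A_X$ contains representatives from the two size-$3$ blocks but from no singleton block, contradicting condition~(2) of a factorization. Concretely one picks $X = H_{\{2,3\}}\cap H_{\{2,4\}}$ (rank $2$), computes $\A_X$ using Lemma~\ref{lem:triples} --- it consists exactly of those $H_C$ with $C$ built from $\{2,3\},\{2,4\}$ via unions/differences that stay connected, i.e.\ $H_{\{2,3\}},H_{\{2,4\}}$ and possibly $H_{\{3\}}$ or $H_{\{4\}}$ if the disjoint-union relation applies (it does not here since $\{2,3\}\cap\{2,4\}=\{2\}\neq\emptyset$), so $\A_X=\{H_{\{2,3\}},H_{\{2,4\}}\}$ --- wait, that just recovers conflicting-ness. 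The real contradiction comes from a rank-$3$ flat; I would search among flats $H_A\cap H_B\cap H_C$ with the three sets pairwise conflicting, e.g.\ $X=H_{\{1,2\}}\cap H_{\{2,3\}}\cap H_{\{2,4\}}$: these are pairwise conflicting so $\A_X$ must have a singleton in its induced partition, but by the multiset constraint all three blocks meeting $\A_X$ have size $\ge 3$ in $\A$ and I must show all of them still contribute at least two hyperplanes to $\A_X$, which is where the explicit list of $11$ hyperplanes gets used.

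For part~(2), the argument is parallel and in fact a bit cleaner. In $C_4$ (label the cycle $1\!-\!2\!-\!3\!-\!4\!-\!1$), the four edge-hyperplanes $H_{\{1,2\}},H_{\{2,3\}},H_{\{3,4\}},H_{\{1,4\}}$ are pairwise conflicting: adjacent edges share a vertex and are incomparable (type~(1)); opposite edges like $\{1,2\},\{3,4\}$ are disjoint with $H_{\{1,2\}\cup\{3,4\}}=H_{\{1,2,3,4\}}\in\A$ --- hmm, so opposite edges are \emph{not} conflicting. So I replace the obstruction set: take the three edges $H_{\{1,2\}},H_{\{2,3\}},H_{\{3,4\}}$ (pairwise adjacent-or-sharing, all type~(1) conflicting) together with the three-vertex path $H_{\{1,2,3,4\}}$? --- check $\{1,2,3,4\}$ vs $\{1,2\}$: disjoint? no, $\{1,2\}\subset\{1,2,3,4\}$, symmetric difference $\{3,4\}$ which \emph{is} connected in $C_4$, so $H_{\{3,4\}}\in\A$, not conflicting. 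This shows the combinatorics of $C_4$ is more forgiving and the direct conflicting-family approach is delicate; the main obstacle is precisely assembling a large enough mutually-conflicting family. I expect the intended proof sidesteps this by instead using Proposition~\ref{prop:factored_chi} together with localization: since $\chi(\A_{C_4},t)=(t-1)(t-4)^3$, a factorization would have to have block sizes $(1,4,4,4)$, and then one localizes at a rank-$2$ flat where $\A_X$ has exactly two hyperplanes coming from two different size-$4$ blocks (produced by two conflicting edges sharing a vertex, e.g.\ $\{1,2\},\{2,3\}$, whose intersection flat $X$ satisfies $\A_X=\{H_{\{1,2\}},H_{\{2,3\}}\}$ by Lemma~\ref{lem:triples}), and the induced partition $\pi_X$ then has two singleton blocks --- which is fine, not yet a contradiction --- so one must go to rank $3$. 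I would therefore organize both parts uniformly: (i) compute $\chi$ and deduce the forced multiset of block sizes from Proposition~\ref{prop:factored_chi}; (ii) observe that the singleton block contains exactly one hyperplane $H_{A_0}$; (iii) find a rank-$(\rk\A-1)$ flat $X$ not lying on $H_{A_0}$ such that $\A_X$ still has rank $\rk\A-1$ --- equivalently, $\A_X$ contains hyperplanes from every non-singleton block --- giving an induced partition $\pi_X$ of $\A_X$ with no singleton, contradicting niceness. The heart of the computation, and the step I expect to be the main obstacle, is verifying in each case that such an $X$ exists and that $\A_X$ genuinely meets all three large blocks regardless of how the factorization distributes hyperplanes; this is a finite check over the $11$, resp.\ $13$, hyperplanes, and I would carry it out by listing $L(\A)$ up to rank $\rk\A-1$ explicitly using the circuit-triple description of Lemma~\ref{lem:triples}.
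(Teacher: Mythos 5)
Your proposal does not reach a complete proof for either part, and in both cases the missing piece is exactly the ingredient the paper's argument turns on. For part~(1) you correctly exhibit the pairwise conflicting family $\{\{1,2\},\{2,3\},\{2,4\},\{1,2,3,4\}\}$ (the paper's $\mathcal{F}_2$) and deduce from Lemma~\ref{lem:factored} and Proposition~\ref{prop:factored_chi} that any nice partition has four blocks of sizes $1,3,3,4$, each containing exactly one of these four hyperplanes. But your proposed finish fails: for $X=H_{\{1,2\}}\cap H_{\{2,3\}}\cap H_{\{2,4\}}$ one has $x_1=x_3=x_4=-x_2$ on $X$, and checking all $11$ connected subsets shows that $(\A_{A_{3,2}})_X=\{H_{\{1,2\}},H_{\{2,3\}},H_{\{2,4\}}\}$ exactly; the induced partition is then three singletons, so condition (2) of Definition~\ref{def:factored} is satisfied and no contradiction arises. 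The paper instead closes the argument combinatorially with a \emph{second} pairwise conflicting family, $\mathcal{F}_1=\{\{2\},\{1,2,3\},\{1,2,4\},\{2,3,4\}\}$, disjoint from the first: every block must contain one member of each family, which the size-one block cannot do. You are one family short of this purely counting argument, and the localization route you sketch is left as an unexecuted search whose first concrete candidate does not work.

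For part~(2) you in effect concede that you have no argument: you rightly observe that the four edges of $C_4$ are not pairwise conflicting (opposite edges are disjoint with connected union $\{1,2,3,4\}$), but you never find the family that does the job, namely the four $3$-element subsets $\{1,2,3\},\{1,2,4\},\{1,3,4\},\{2,3,4\}$, any two of which meet in two vertices and are incomparable, hence conflicting. From there the paper uses $\chi(\A_{C_4})=(t-1)(t-4)^3$ and the symmetry of $C_4$ to place one of these, say $H_{\{1,2,3\}}$, alone in the singleton block, then exploits the circuit-triples through $\{1,2,3\}$ (independence of the partition forces the other two members of each such triple into a common block) together with further conflicting pairs to force five hyperplanes into a block of size four. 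Your fallback scheme also contains a logical slip: demanding that $\A_X$ ``contain hyperplanes from every non-singleton block'' is not equivalent to the induced partition $\pi_X$ having no singleton block, since a block may meet $\A_X$ in exactly one hyperplane; so step (iii) as stated would not produce a contradiction even if such an $X$ were found, and the finite verification you defer to is precisely the content that is missing.
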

	\begin{proof}
		For (1) consider these two families of subsets of $\{1,2,3,4\}$:
		\begin{align*}
			\mathcal{F}_1 =& \{\{2\},\{1,2,3\},\{1,2,4\},\{2,3,4\}\},\\
			\mathcal{F}_2 =& \{\{1,2\},\{2,3\},\{2,4\},\{1,2,3,4\}\}.
		\end{align*}
		Note that the hyperplanes corresponding to these sets are all in the \csa $\A_{A_{3,2}}$ and these two families are disjoint.
		Furthermore within both families, the sets are pairwise conflicting in this arrangement.
		For instance in the family $\mathcal{F}_1$ the set $\{2\}$ is conflicting with all other sets in this family by condition (2) in Definition~\ref{def:conflicting} as this arrangement does not contain the hyperplanes $H_{\{1,3\}}$, $H_{\{1,4\}}$, and $H_{\{3,4\}}$.
		The latter three sets in $\mathcal{F}_1$ are conflicting by condition (1) in this definition.
		
		Now suppose that $\A_{A_{3,2}}$ has a nice partition $\pi$.
		By Lemma~\ref{lem:factored} the four hyperplanes of the family $\mathcal{F}_1$ must all lie in different blocks of $\pi$.
		The same holds for the four hyperplanes of the family $\mathcal{F}_2$.
		But this is impossible as the partition $\pi$ must contain a singleton block by Proposition~\ref{prop:factored_chi} as $1$ is a root of $\chi(\A_{A_{3,2}})$.
		Therefore $\A_{A_{3,2}}$ is not factored.
		
		Now for (2) consider the arrangement $\A_{C_4}$ and suppose it has a nice partition $\pi=(\pi_1,\pi_2,\pi_3,\pi_4)$. 
		It has the family of pairwise conflicting sets $$\{ \{1,2,3\},\{1,2,4\},\{1,3,4\},\{2,3,4\}\}.$$
		Therefore by Lemma~\ref{lem:factored} the corresponding hyperplanes must lie in different parts of $\pi$.
		Without loss of generality we can assume that
		\[
			H_{\{1,2,3 \}}\in \pi_1,\quad H_{\{1,2,4 \}}\in \pi_2,\quad H_{\{1,3,4 \}}\in \pi_3, \mbox{ and } H_{\{2,3,4 \}}\in \pi_4.
		\]
		By Proposition~\ref{prop:cycle} we have $\chi(\A_{C_4})=(t-1)(t-4)^3$.
		Thus we can by Proposition~\ref{prop:factored_chi} assume that $|\pi_1|=1$ and $|\pi_i|=4$ for $i=2,3, 4$ and hence $\pi_1=\{H_{\{1,2,3 \}}\}$.
		
		The families of sets $\{\{1\},\{2,3\}\}$, $\{\{1,2\},\{3\}\}$, and $\{\{4\},\{1,2,3,4\}\}$ each form a circuit-triple with the set $\{1,2,3\}$.
		Thus by definition of a nice partition the two corresponding hyperplanes of each of these families must lie in the same block of $\pi$.
		The set $\{1,2\}$ is conflicting with $\{1,3,4\}$ and $\{2,3,4\}$.
		Analogously, the set $\{2,3\}$ is conflicting with $\{1,2,4\}$ and $\{1,3,4\}$.
		Thus we must have
		\[
		\{ H_{\{1,2,4 \}},H_{\{1,2 \}},H_{\{3 \}}\}\subset \pi_2,\, \{ H_{\{1,3,4 \}},H_{\{4 \}},H_{\{1,2,3,4 \}}\}\subset  \pi_3, \{ H_{\{2,3,4 \}},H_{\{1 \}},H_{\{2,3 \}}\}\subset  \pi_4.
		\]
		Now the set $\{1,4\}$ is conflicting with the sets $\{1,2\}$ and $\{2,3,4\}$.
		Thus $H_{\{1,4\}}\in \pi_3$.
		Analogously the set $\{3,4\}$ is conflicting with the sets $\{1,2,,4\}$ and $\{2,3\}$ and therefore we must also have $H_{\{3,4\}}\in \pi_3$.
		As this would be the fifth hyperplane in $\pi_3$ we have a contradiction to the fact $|\pi_3|=4$ which means that $\A_{C_4}$ is not factored.
	\end{proof}
	\begin{figure}[hb]
	\begin{subfigure}{0.35\linewidth}
		\begin{tikzpicture}[scale=0.45,mynode/.style={font=\color{#1}\sffamily,circle,draw=black,inner sep=1pt,minimum size=0.5cm}]
		\node[mynode=black] (v1) at (-6,0) {$1$};
		\node[mynode=black] (v2) at (-3,0) {$2$};
		\node[mynode=black] (v3) at (0,0) {$3$};
		\node[mynode=black] (v4) at (3,0) {$4$};
		\node[mynode=black] (v5) at (-4.5,-3) {$5$};
		\node (v6) at (5,0) {};
		
		\draw[gray,very thick] (v1)--(v2)--(v3)--(v4) (v1) --(v5) --(v2);
		\end{tikzpicture}
		\end{subfigure}
		\begin{subfigure}[b]{0.6\linewidth}	
		\begin{center}
		\setlength{\tabcolsep}{2pt}
		\renewcommand{\arraystretch}{1.3}
        \begin{tabular}{*{2}{l}@{}}
        \toprule
        	Blocks $\quad$& Hyperplanes $H_A$ for the sets $A\subseteq \left[5\right]$\\
        	\midrule \rowcolor{lightgray}
        	$\pi^{(4)}_1$ & $\{1,5\}$  \\ 
        	$\pi^{(4)}_2$ & $\{5\},\{1\},\{1,2\},\{1,2,3\},\{1,2,3,4\}$  \\\rowcolor{lightgray}
        	$\pi^{(4)}_3$ & $\{2\},\{2,5\},\{1,2,5\}$  \\
        	$\pi^{(4)}_4$ & $\{3\},\{2,3\},\{2,3,5\},\{1,2,3,5\}$   \\\rowcolor{lightgray}
        	$\pi^{(4)}_5$ & $\{4\},\{3,4\},\{2,3,4\},\{2,3,4,5\},\{1,2,3,4,5\}$   \\
        	\bottomrule
        \end{tabular}
        \end{center}
		\end{subfigure}
		\caption{The path-with-triangle-graph $\Delta_{4,1}$ and the blocks of the nice partition $\pi^{(4)}$ as defined in the proof of Proposition~\ref{prop:factored}.}
		\label{fig:A41}
	\end{figure}
	\begin{proposition}\label{prop:factored}
		The \csa $A_{\Delta_{n,1}}$ of the path-with-triangle graph $\Delta_{n,1}$ for $n \ge 2$ is factored.
		These graphs are paths with an extra triangle at the two left-most nodes.
	\end{proposition}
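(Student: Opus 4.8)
The plan is to exhibit an explicit factorization of $\A_{\Delta_{n,1}}$, namely the natural generalization to arbitrary $n\ge 2$ of the partition $\pi^{(4)}$ displayed in Figure~\ref{fig:A41}, and then to verify the two conditions of Definition~\ref{def:factored} directly. Concretely, I would put $\pi^{(n)}_1=\{H_{\{1,n+1\}}\}$ and $\pi^{(n)}_2=\{H_{\{n+1\}}\}\cup\{H_{\{1,\dots,j\}}\mid 1\le j\le n\}$, and, for $2\le r\le n$,
\[
\pi^{(n)}_{r+1}=\{H_{\{a,a+1,\dots,r\}}\mid 2\le a\le r\}\cup\{H_{\{2,\dots,r,n+1\}},\,H_{\{1,\dots,r,n+1\}}\}.
\]
A routine enumeration of the connected induced subgraphs of $\Delta_{n,1}$ (an interval $\{a,\dots,b\}\subseteq\{1,\dots,n\}$; the singleton $\{n+1\}$; or $\{n+1\}$ together with an interval $\{1,\dots,r\}$ or $\{2,\dots,r\}$) shows that these blocks are pairwise disjoint and exhaust $\A_{\Delta_{n,1}}$, with block sizes $(1,n+1,3,4,\dots,n,n+1)$; by Theorem~\ref{thm:path_triangle} and Proposition~\ref{prop:factored_chi} these sizes are forced to equal the exponents, so this is a useful consistency check, and for $n=2$ one recovers exactly the factorization of Example~\ref{ex:triangle_factored}.

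For condition~(1), independence, I would pass to the coordinates $y_i=x_1+\dots+x_i$ for $1\le i\le n$ and $y_{n+1}=x_{n+1}$ (with $y_0:=0$). In these coordinates the normal of every member of $\pi^{(n)}_{r+1}$ has the shape $y_r-y_i+\varepsilon y_{n+1}$ with $0\le i<r$ and $\varepsilon\in\{0,1\}$, the normal of every member of $\pi^{(n)}_2$ is one of $y_1,\dots,y_n,y_{n+1}$, and the normal of $\pi^{(n)}_1$ is $y_1+y_{n+1}$. Given an arbitrary transversal, use the representatives of $\pi^{(n)}_{n+1},\pi^{(n)}_n,\dots,\pi^{(n)}_3$ in turn to eliminate $y_n,y_{n-1},\dots,y_2$; one is left with a $2\times2$ system in $y_1,y_{n+1}$ formed by $y_1+y_{n+1}$ and the reduced representative of $\pi^{(n)}_2$. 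Tracking the $y_{n+1}$-coefficient through the elimination shows the latter lies, modulo the eliminated directions, in $\{y_1,\,y_{n+1},\,y_1-y_{n+1},\,-y_{n+1}\}$, each of which is independent of $y_1+y_{n+1}$; hence every transversal is a basis.

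For condition~(2) one must show that $\pi^{(n)}_X$ has a singleton block for every $X\in L(\A_{\Delta_{n,1}})\setminus\{V\}$. The key observation is that $\pi^{(n)}_1=\{H_{\{1,n+1\}}\}$ is itself a singleton block, so the condition is automatic for every flat $X$ with $H_{\{1,n+1\}}\in(\A_{\Delta_{n,1}})_X$, i.e.\ whenever $x_1+x_{n+1}$ vanishes on $X$; only the remaining flats need attention. For $X$ of rank $2$ this reduces, via Lemma~\ref{lem:triples} and Lemma~\ref{lem:factored}, to two purely set-theoretic statements about the blocks: no block of $\pi^{(n)}$ contains a pair of conflicting sets, and no block contains an entire circuit-triple. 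Both follow by inspecting the three families of blocks above — for instance $\pi^{(n)}_{r+1}$ consists of nested/overlapping intervals with fixed right endpoint $r$ together with two sets containing $n+1$, and one checks that every circuit-triple formed from such sets is split across at least two blocks.

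The remaining, and genuinely harder, case is that of flats of rank $\ge3$ on which $x_1+x_{n+1}$ does not vanish; here I would argue by induction on $n$, with base case $n=2$ ($\Delta_{2,1}=C_3$, Example~\ref{ex:triangle_factored}). The idea is that every nontrivial localization $(\A_{\Delta_{n,1}})_X$, after the coordinate change above and after applying the graph operations of Lemmas~\ref{lem:induced_subgraph} and~\ref{lem:edge_contraction} (deletion of leaves and pendant paths gives induced subgraphs, contraction of a triangle edge collapses the triangle to a path), is a product of braid arrangements together with at most one connected-subgraph arrangement of strictly lower rank, and that $\pi^{(n)}_X$ restricts to the analogous partition on each factor: the braid factors carry their classical supersolvable factorization, hence a singleton block (via Proposition~\ref{prop:factored_ss}), and the lower-rank factor is handled by the induction, so $\pi^{(n)}_X$ inherits a singleton block. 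The main obstacle is precisely this last step: one has to pin down exactly which localizations occur, verify that the block structure induced on each is indeed the expected one — in particular that the merging of blocks forced by localization never eliminates all singletons — and then organize the case distinction so that the induction on $n$ closes. Everything else (the partition itself, independence, and the rank-$\le2$ part of condition~(2)) is routine if somewhat tedious bookkeeping.
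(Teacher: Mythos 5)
Your partition is exactly the one the paper uses (your blocks $\pi^{(n)}_{r+1}$ coincide with the paper's $\pi^{(n)}_i$, $i=r+1$), and your independence argument, while different from the paper's induction, is correct: in the coordinates $y_i=x_1+\cdots+x_i$, $y_{n+1}=x_{n+1}$ every normal in $\pi^{(n)}_{r+1}$ is $y_r-y_i+\varepsilon y_{n+1}$ with $i<r$, and since $\varepsilon=1$ forces $i\in\{0,1\}$ the back-substitution really does leave the $\pi^{(n)}_2$-representative in $\{y_1,\,y_{n+1},\,y_1-y_{n+1},\,-y_{n+1}\}$, each transversal to $y_1+y_{n+1}$. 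Your treatment of rank-$\le 2$ flats (no block contains a conflicting pair, no block contains a whole circuit-triple, both checkable because all sets in a block $\pi^{(n)}_{r+1}$ contain $r$) is also fine.

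The gap is where you place it yourself: the singleton condition for flats $X$ of rank $\ge 3$ with $H_{\{1,n+1\}}\notin\A_X$. The structural claim you lean on --- that every such localization is a product of braid arrangements and at most one lower-rank \csa, with the induced partition being ``the expected one'' on each factor --- is neither proved nor obviously true. The localizations handled by Lemmas~\ref{lem:induced_subgraph} and~\ref{lem:edge_contraction} are only those at very special flats (intersections of coordinate hyperplanes, resp.\ the flats $X_e$); a generic flat of $\A_{\Delta_{n,1}}$ is of neither form, and its localization need not be a \csa or a product of braid arrangements at all. Without identifying these localizations and their induced blocks, the induction does not close, and this is precisely the content of condition~(2).

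The paper closes it with a more elementary induction that you could adopt. Since no circuit-triple of $\A_{\Delta_{n,1}}$ has all three defining sets containing the vertex $n$ (any two such sets meet in $n$, so none is a disjoint union of the other two), every flat $X$ can be written as $X=H_A\cap Y$ with $Y=\bigcap_i H_{B_i}$ and $n\notin B_i$ for all $i$; such a $Y$ is, after deleting $n$, a flat of $\A_{\Delta_{n-1,1}}$, whose partition $\pi^{(n-1)}$ is obtained from $\pi^{(n)}$ by dropping the last block. Induction gives a singleton block $(\pi^{(n)}_Y)_k$, and a three-way case analysis --- $k\neq 2$; $k=2$ and $A\neq[n]$; $k=2$ and $A=[n]$ --- shows that the hyperplanes of $\A_X\setminus\A_Y$ either all land in blocks other than $k$, or else produce a new singleton in the last block $\pi^{(n)}_{n+1}$. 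This replaces your unproved product decomposition with a concrete bookkeeping step and is the missing piece of your argument.
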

	\begin{proof}
		Consider the following partition of the hyperplanes of $\A_{\Delta_{n,1}}$:
		\begin{align*}
		    \pi_1^{(n)} &\coloneqq \{ H_{\{1,n+1\}} \},\\
		    \pi_2^{(n)} &\coloneqq \{ H_{\{n+1\}},H_{\{1\}},H_{\{1,2\}},\dots,H_{\{1,2,\dots,n\}} \},\\
		    \pi_i^{(n)} &\coloneqq \{ H_X \mid X\cap \{i,\dots,n\}=\emptyset \} \setminus \bigcup_{j=1}^{i-1}\pi_j\quad \mbox{for }3\le i \le n+1.
		\end{align*}
		Figure~\ref{fig:A41} depicts the graph $\Delta_{4,1}$ together with the associated partition $\pi^{(4)}$.
		We now show by induction on $n\ge 2$ that the partition $\pi^{(n)}=(\pi_1,\dots,\pi_{n+1})$ is nice.
		Example~\ref{ex:triangle_factored} states that the partition $\pi^{(2)}$ is nice.
		
		Note that in general $\pi^{(n-1)}$ arises from $\pi^{(n)}$ by removing the last block, removing the element $n-1$ from all sets defining the hyperplanes and mapping $n$ to $n-1$.
		We will frequently use this fact implicitly in the following arguments when applying the induction hypothesis.
		
		So now consider $\pi^{(n)}$ for some fixed $n>2$.
		We first show that $\pi^{(n)}$ is independent.
		Let $H_i\in \pi_i^{(n)}$ be a collection of hyperplanes for $1\le i \le n+1$ which we will show to be linearly independent.
		The sets defining the hyperplanes in $\pi_{n+1}^{(n)}$ all involve the element $n$ and the only hyperplane whose set contains $n$ in the other blocks is the hyperplane $H_{\{1,2,\dots,n\}}$ in $\pi_2^{(n)}$.
		We now distinghuish the following three cases:
		\begin{description}
		\item[Case 1] Suppose $H_2\neq H_{\{1,2,\dots,n\}}$.
		Then the collection $(H_1,\dots,H_{n})$ is independent by induction and thus the collection $(H_1,\dots,H_{n+1})$ is independent as $H_{n+1}$ is the only hyperplane whose set contains $n$.
		\item[Case 2] Suppose $H_2=H_{\{1,2,\dots,n\}}$ and $H_{n+1}\neq H_{\{2,\dots,n+1\}}$.
		In this case, the intersection of $H_2$ and $H_{n+1}$ is contained in some hyperplane in $\pi_2^{(n)}$ by construction, say $H_2'$ with $H_2'\neq H_2$.
		The collection $(H_1,H_2',H_3\dots,H_{n+1})$ is then again independent by induction as $H_{n+1}$ is the only hyperplane that involves the element $n$.
		Thus by the above observation, also the collection $(H_1,H_2,H_3\dots,H_{n+1})$ is independent.
		\item[Case 3] Suppose $H_2=H_{\{1,2,\dots,n\}}$ and $H_{n+1}= H_{\{2,\dots,n+1\}}$.
		In this case, $H_{\{1\}}$ contains the intersection of $H_1=H_{\{1,n+1\}}$, $H_2=H_{\{1,2,\dots,n\}}$ and $H_{n+1}=H_{\{2,\dots,n+1\}}$.
		Therefore as above the collection $(H_1,H_{\{1\}},H_3\dots,H_{n+1})$ is independent by induction which again implies that the collection $(H_1,H_2,H_3\dots,H_{n+1})$ is independent.
		\end{description}
		
		As a second step we prove the singleton block condition.
		Let $X$ be any element in $L(\A_{\Delta_{n,1}})$.
		Since there is no circuit-triple of three hyperplanes whose corresponding sets all contain $n$, we can express $X$ as
		\[
		    X=H_A\cap \bigcap_{i=1}^s H_{B_i},
		\]
		for sets $A,B_i\subseteq \left[n+1\right]$ and some $s\ge 1$ with the condition that $n\not \in B_i$ for all $1\le i \le s$.
		Set $Y\coloneqq \bigcap_{i=1}^s H_{B_i}$.
		Following the above comment, the subspace $Y$ is also in $L(\A_{\Delta_{n-1,1}})$.
		Therefore by induction, the induced partition $\pi^{(n)}_Y$ has a singleton block, say $(\pi^{(n)}_Y)_k$ for some $k$.
		We again need to distinguish these cases:
		\begin{description}
		\item[Case 1] Suppose $k\neq 2$.
		Then also the induced partition $\pi^{(n)}_X$ must have a singleton block as the extra hyperplanes in this larger localized arrangement are all added in the second or last block since the circuit-triples involving these new hyperplanes contain at most one hyperplane in ${\A_{\Delta_{n,1}}}_Y$.
		\item[Case 2] Suppose $k=2$ and $A\neq \left[n\right]$.
		In this case the hyperplanes of ${\A_{\Delta_{n,1}}}_X\setminus {\A_{\Delta_{n,1}}}_Y$ are all contained in the block $\pi^{(n)}_{n+1}$.
		Thus, $(\pi^{(n)}_{X})_2$ is a singleton block.
		\item[Case 3] Suppose $k=2$ and $A= \left[n\right]$.
		In this case the intersection of $H_A$ and the unique hyperplane in $(\pi^{(n)}_Y)_2$ is contained in exactly one hyperplane in $\pi^{(n)}_{n+1}$.
		Therefore, $(\pi^{(n)}_{X})_{n+1}$ is a singleton block in $\pi^{(n)}_{X}$.\qedhere
		\end{description}
	\end{proof}

	\begin{theorem}\label{thm:factored}
		Let $G$ be a connected graph.
		The \csa $A_G$ is factored if and only if $G$ is a path graph or a path-with-triangle graph $\Delta_{n,1}$.
	\end{theorem}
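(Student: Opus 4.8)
The plan is to prove both implications; the reverse one is immediate from results already available, and the forward one combines the obstruction machinery of Section~\ref{sec:converse} with a short list of small non-factored arrangements. For the \emph{``if''} direction: if $G$ is a path then $\A_G$ is the braid arrangement of type $A$ (Example~\ref{typeA}), which is supersolvable and hence factored by Proposition~\ref{prop:factored_ss}; and if $G=\Delta_{n,1}$ for some $n\ge 2$ (which includes $C_3=\Delta_{2,1}$) then $\A_G$ is factored by Proposition~\ref{prop:factored}. This already settles one direction.

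For the \emph{``only if''} direction, assume $\A_G$ is factored. The key observation is that the proof of Theorem~\ref{thm:classification} uses only two features of freeness: that the property is local, i.e.\ passes to the localizations produced by Lemmas~\ref{lem:induced_subgraph} and~\ref{lem:edge_contraction}, and that it forces every root of the characteristic polynomial to be a nonnegative integer. Both features hold for factoredness — the first because being factored is local, the second by Proposition~\ref{prop:factored_chi} — and the obstruction graphs $G_1,G_3,G_6,G_7,G_8$ of Figure~\ref{fig:8graphs} have characteristic polynomials with non-integral roots by Proposition~\ref{prop:8_graphs}. So, running the argument of Proposition~\ref{prop:small_graphs} and Theorem~\ref{thm:classification} verbatim, a factored \csa $\A_G$ forces $G$ to be a path, a cycle, an almost path $A_{n,k}$, or a path-with-triangle graph $\Delta_{n,k}$.

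It then remains to prune these four families. Among cycles: $C_3=\Delta_{2,1}$ is factored (already on the list), but for $n\ge 4$ the graph $C_n$ contracts onto $C_4$, whose \csa is not factored by Proposition~\ref{prop:not_factored}(2), so Lemma~\ref{lem:edge_contraction} leaves only $C_3$. Among almost paths $A_{n,k}$ with $1<k<n$: the induced subgraph on $\{k-1,k,k+1,n+1\}$ is isomorphic to $A_{3,2}$, so Lemma~\ref{lem:induced_subgraph} together with Proposition~\ref{prop:not_factored}(1) rules out every almost path. Among path-with-triangle graphs $\Delta_{n,k}$: the reflection $i\mapsto n+1-i$ on $[n]$ fixing $n+1$ gives $\Delta_{n,k}\cong\Delta_{n,n-k}$, so we may assume $2\le k\le n-2$; then the induced subgraph on $\{k-1,k,k+1,k+2,n+1\}$ is isomorphic to $\Delta_{4,2}$, and contracting the edge joining the two degree-three vertices of this copy identifies them and produces the star $A_{3,2}$. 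Hence $\A_{\Delta_{4,2}}$ is not factored by Lemma~\ref{lem:edge_contraction} and Proposition~\ref{prop:not_factored}(1), so neither is $\A_{\Delta_{n,k}}$ for $2\le k\le n-2$; the only surviving values are $k\in\{1,n-1\}$, which give $\Delta_{n,1}$ up to isomorphism. Combining, $\A_G$ is factored precisely when $G$ is a path or a $\Delta_{n,1}$.

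I expect the only delicate point to be the bookkeeping in the transfer from Section~\ref{sec:converse}: one must check that nothing in the proofs of Proposition~\ref{prop:small_graphs} and Theorem~\ref{thm:classification} uses freeness beyond locality and integrality of the exponents (it does not), and one must verify the small-graph identifications $A_{n,k}[\{k-1,k,k+1,n+1\}]\cong A_{3,2}$, $\Delta_{n,k}[\{k-1,k,k+1,k+2,n+1\}]\cong\Delta_{4,2}$, and the edge contraction of $\Delta_{4,2}$ onto $A_{3,2}$. These are routine but require care with the vertex labels, and the last one is what lets us reuse Proposition~\ref{prop:not_factored} instead of running a fresh conflicting-sets computation for $\Delta_{4,2}$.
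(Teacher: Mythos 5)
Your proof is correct and follows essentially the same route as the paper: both directions rest on Propositions~\ref{prop:factored_ss}, \ref{prop:factored}, \ref{prop:factored_chi}, \ref{prop:not_factored}, locality of factoredness, and the obstruction graphs of Figure~\ref{fig:8graphs}, with $\Delta_{4,2}$ eliminated by contracting its central edge onto $A_{3,2}$ exactly as in the paper. The only difference is organizational: you first rerun the argument of Theorem~\ref{thm:classification} (using that factoredness is local and forces integral roots of $\chi$) to reduce to the four families and then prune them, whereas the paper interleaves the pruning into a single case analysis; both are valid.
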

	\begin{proof}
		The \csa of a path graph is the braid arrangement and therefore known to be supersolvable.
		By Proposition~\ref{prop:factored_ss} these arrangements are also factored.
		Proposition~\ref{prop:factored} shows that the \csas of the graphs $\Delta_{n,1}$ for  $n \ge 2$ are factored.
		
		For the converse let $G$ be a connected graph with at least $4$ nodes and assume the \csa $\A_G$ is factored.
		Suppose the graph has a node of degree at least $4$.
		Then it contains an induced graph $H$ with $G_3$ in Figure~\ref{fig:8graphs} as a subgraph.
		By Proposition~\ref{prop:small_graphs} the characteristic polynomial of the \csa of $H$ does not factor over the integers which implies by Proposition~\ref{prop:factored_chi} that $\A_G$ is not factored as being factored is a local property.
		Therefore all nodes in $G$ have degree at most $3$.
		
		Now suppose $G$ contains a cycle of length at least $4$.
		After contracting some edges of $G$ there is an induced subgraph which is either the cycle graph $C_4$, the graph $G_1$ of Figure~\ref{fig:8graphs} or the complete graph $K_4$.
		The \csa of the cycle graph $C_4$ is not factored by Proposition~\ref{prop:not_factored}~(2).
		The characteristic polynomial of the \csa of the remaining graphs does not factor over the integers by Proposition~\ref{prop:small_graphs}.
		Therefore as above $\A_G$ can't be factored.
		An analogous argument using the graphs $G_1$ and $G_5$ in Figure~\ref{fig:8graphs} shows that $G$ can have at most one cycle of length $3$.
		
		Now assume that $G$ contains exactly one cycle of length $3$.
		Then $G$ contains as an induced subgraph either the graph $G_4$ in Figure~\ref{fig:8graphs}, or the path-with-triangle graphs $\Delta_{4,2}$ or $\Delta_{n,1}$ for $n\ge 2$.
		The characteristic polynomial of the graph $G_4$ does not factor by Proposition~\ref{prop:8_graphs}.
		Contracting the edge $\{2,3\}$ in the graph $\Delta_{4,2}$ yields the almost-path-graph $A_{3,2}$ whose \csa is not factored by Proposition~\ref{prop:not_factored}~(1).
		Thus the only possibility for factored \csas are the ones corresponding to the graphs  $\Delta_{n,1}$ for $n\ge 2$.
		
		Lastly, assume $G$ is a tree.
		If $G$ has a node of degree $3$ then it has the graph $A_{3,2}$ as an induced subgraph whose \csa is not factored by Proposition~\ref{prop:not_factored}~(1).
		Hence, $G$ must be a path graph in this case which finishes the proof.		
	\end{proof}

	\begin{cor}\label{cor:ss}
		Let $G$ be a connected graph.
		The \csa $A_G$ is supersolvable if and only if $G$ is a path graph.
	\end{cor}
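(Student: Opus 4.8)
The plan is to read off Corollary~\ref{cor:ss} from Theorem~\ref{thm:factored} together with one extra observation, namely that the \csa of a triangle fails to be supersolvable. For the ``if'' direction there is nothing to do: if $G$ is a path then $\A_G$ is the braid arrangement, which is supersolvable as already recalled in the proof of Theorem~\ref{thm:factored}. For the converse, suppose $\A_G$ is supersolvable. Then it is factored by Proposition~\ref{prop:factored_ss}, so Theorem~\ref{thm:factored} leaves only two possibilities: $G$ is a path, or $G=\Delta_{n,1}$ for some $n\ge 2$. Hence it remains only to rule out the graphs $\Delta_{n,1}$.

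To do this I would use that supersolvability is a local property (Subsection~\ref{subsec:loc_prop}) together with Lemma~\ref{lem:induced_subgraph}. In $\Delta_{n,1}$ the three vertices $1,2,n+1$ are pairwise adjacent for every $n\ge 2$, so the induced subgraph on $\{1,2,n+1\}$ is a triangle; thus if $\A_{\Delta_{n,1}}$ were supersolvable, so would be $\A_{C_3}$. It therefore suffices to show that $\A_{C_3}$ is \emph{not} supersolvable, and I would prove this by contradiction from the numerics. The arrangement $\A_{C_3}$ has rank $3$ and, by Proposition~\ref{prop:cycle}, characteristic polynomial $(t-1)(t-3)^2$. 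A maximal chain of modular elements $V=X_0<X_1<X_2<X_3$ produces, via Proposition~\ref{prop:factored_ss}, a nice partition $(\pi_1,\pi_2,\pi_3)$ with $\pi_i=\A_{X_i}\setminus\A_{X_{i-1}}$, and Proposition~\ref{prop:factored_chi} forces the multiset $\{|\pi_1|,|\pi_2|,|\pi_3|\}$ to equal $\{1,3,3\}$. Since $X_1$ is a hyperplane we have $\A_{X_1}=\{X_1\}$, so $|\pi_1|=1$ and therefore $|\A_{X_2}|=|\pi_1|+|\pi_2|=4$; in other words the rank-$2$ flat $X_2$ would have to be contained in four hyperplanes of $\A_{C_3}$.

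The contradiction then comes from the elementary fact that no rank-$2$ flat of a $0/1$-arrangement lies in four or more hyperplanes: if $H_{A_1},\dots,H_{A_4}$ all contained a common rank-$2$ flat, then by Lemma~\ref{lem:triples} every one of the four triples $\{A_i,A_j,A_k\}$ would be a circuit-triple, and a short case check on the relations $A_i\,\dot{\cup}\,A_j=A_k$ shows this is impossible for four pairwise distinct nonempty sets (one is quickly forced to conclude that some $A_i$ is empty). I do not expect a genuine obstacle here: the substance is already contained in Theorem~\ref{thm:factored}, and the only new ingredient is this last combinatorial remark, which is a two-line check and could alternatively be replaced by listing the $7$ hyperplanes and all rank-$2$ flats of $\A_{C_3}$ explicitly.
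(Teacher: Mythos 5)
Your proof is correct and follows the same skeleton as the paper's: use Proposition~\ref{prop:factored_ss} to reduce to Theorem~\ref{thm:factored}, then eliminate the graphs $\Delta_{n,1}$ by reducing to the triangle. The two places where you diverge are both sound and worth noting. First, you reach the triangle via Lemma~\ref{lem:induced_subgraph} (the induced subgraph on $\{1,2,n+1\}$), whereas the paper contracts the path edges of $\Delta_{n,1}$ via Lemma~\ref{lem:edge_contraction}; either reduction works since the triangle arises both ways. Second, and more substantively, the paper simply cites \cite[p.~49]{OT92} for the non-supersolvability of $\A_{C_3}$, while you give a self-contained argument: a modular chain would force a rank-$2$ flat lying on $|\pi_1|+|\pi_2|=1+3=4$ hyperplanes (using $\chi(\A_{C_3})=(t-1)(t-3)^2$ and Propositions~\ref{prop:factored_ss} and~\ref{prop:factored_chi}), but Lemma~\ref{lem:triples} shows that four pairwise distinct nonempty sets cannot have all their triples be circuit-triples (the relations $A_1\mathbin{\dot\cup}A_2=A_3$ and $A_1\mathbin{\dot\cup}A_4=A_2$ force $A_1=A_1\cap A_2=\emptyset$), so every rank-$2$ flat of a $0/1$-arrangement has multiplicity at most $3$. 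I checked this case analysis and it holds; your observation that rank-$2$ localizations of $0/1$-arrangements never exceed three hyperplanes is a nice general fact that makes the argument independent of the external reference.
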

	\begin{proof}
		The \csa of a path graph is the braid arrangement which is supersolvable.
		For the converse note that by Proposition~\ref{prop:factored_ss} a supersolvable arrangement is factored.
		Therefore, Theorem~\ref{thm:factored} yields that $G$ can only be a triangle or a path-with-triangle graph $\Delta_{n,1}$ for some $n\ge 2$.
		It is known that the \csa of the triangle graph is not supersolvable, see for instance~\cite[p. 49]{OT92}.
		As being supersolvable is a local property and all graphs $\Delta_{n,1}$ can be contracted to the triangle graph this yields that the \csa of the graph $\Delta_{n,1}$ is not supersolvable for all $n\ge 2$.
	\end{proof}

\newcommand{\etalchar}[1]{$^{#1}$}


\begin{thebibliography}{ABC{\etalchar{+}}16}
	
	\bibitem[ABC{\etalchar{+}}16]{ABCHT_mat}
	Takuro Abe, Mohamed Barakat, Michael Cuntz, Torsten Hoge, and Hiroaki Terao.
	\newblock The freeness of ideal subarrangements of {W}eyl arrangements.
	\newblock {\em J. Eur. Math. Soc. (JEMS)}, 18(6):1339--1348, 2016.
	
	\bibitem[AD20]{AD20}
	Takuro Abe and Alexandru Dimca.
	\newblock On complex supersolvable line arrangements.
	\newblock {\em J. Algebra}, 552:38--51, 2020.
	
	\bibitem[Ath98]{Athanasiadis}
	Christos~A. Athanasiadis.
	\newblock On free deformations of the braid arrangement.
	\newblock {\em European J. Combin.}, 19(1):7--18, 1998.
	
	\bibitem[BEK21]{BEK_resonance}
	Taylor Brysiewicz, Holger Eble, and Lukas Kühne.
	\newblock Enumerating chambers of hyperplane arrangements with symmetry, 2021.
	\newblock available at
	\href{http://arxiv.org/abs/2105.14542}{\texttt{arXiv:2105.14542}}.
	
	\bibitem[CG15]{p-CG-13}
	Michael Cuntz and David Geis.
	\newblock Combinatorial simpliciality of arrangements of hyperplanes.
	\newblock {\em Beitr. Algebra Geom.}, 56(2):439--458, 2015.
	
	\bibitem[CH15]{p-CH10}
	Michael Cuntz and Istv{\'a}n Heckenberger.
	\newblock Finite {W}eyl groupoids.
	\newblock {\em J. Reine Angew. Math.}, 702:77--108, 2015.
	
	\bibitem[CM19]{CM19}
	Michael Cuntz and Paul M\"{u}cksch.
	\newblock Supersolvable simplicial arrangements.
	\newblock {\em Adv. in Appl. Math.}, 107:32--73, 2019.
	
	\bibitem[CM20]{CM_mat}
	Michael Cuntz and Paul M\"{u}cksch.
	\newblock M{AT}-free reflection arrangements.
	\newblock {\em Electron. J. Combin.}, 27(1):Paper No. 1.28, 19, 2020.
	
	\bibitem[CRS19]{CRS17}
	Michael Cuntz, Gerhard R\"{o}hrle, and Anne Schauenburg.
	\newblock Arrangements of ideal type are inductively free.
	\newblock {\em Internat. J. Algebra Comput.}, 29(5):761--773, 2019.
	
	\bibitem[CS21]{CS_resonance}
	Zachary Chroman and Mihir Singhal.
	\newblock Computations associated with the resonance arrangement, 2021.
	\newblock available at
	\href{http://arxiv.org/abs/2106.09940}{\texttt{arXiv:2106.09940}}.
	
	\bibitem[Cun11]{p-C10}
	Michael Cuntz.
	\newblock Crystallographic arrangements: Weyl groupoids and simplicial
	arrangements.
	\newblock {\em Bull. London Math. Soc.}, 43(4):734--744, 2011.
	
	\bibitem[Cun12]{Cun12}
	Michael Cuntz.
	\newblock Simplicial arrangements with up to 27 lines.
	\newblock {\em Discrete Comput. Geom.}, 48(3):682--701, 2012.
	
	\bibitem[Cun22]{Cun22}
	Michael Cuntz.
	\newblock A greedy algorithm to compute arrangements of lines in the projective
	plane.
	\newblock {\em Discrete Comput. Geom.}, 68(1):107--124, 2022.
	
	\bibitem[Del72]{Del72}
	Pierre Deligne.
	\newblock Les immeubles des groupes de tresses g\'en\'eralis\'es.
	\newblock {\em Invent. Math.}, 17:273--302, 1972.
	
	\bibitem[FR00]{FR00}
	Michael Falk and Richard Randell.
	\newblock On the homotopy theory of arrangements. {II}.
	\newblock In {\em Arrangements---{T}okyo 1998}, volume~27 of {\em Adv. Stud.
		Pure Math.}, pages 93--125. Kinokuniya, Tokyo, 2000.
	
	\bibitem[Gr{\"u}09]{p-G-09}
	Branko Gr{\"u}nbaum.
	\newblock A catalogue of simplicial arrangements in the real projective plane.
	\newblock {\em Ars Math.~Contemp.}, 2(1):25 pp., 2009.
	
	\bibitem[K{\"u}h20]{kuehne_resonance}
	Lukas K{\"u}hne.
	\newblock The universality of the resonance arrangement and its {B}etti
	numbers.
	\newblock to appear in {\em Combinatorica}, 2020.
	\newblock available at
	\href{http://arxiv.org/abs/2008.10553}{\texttt{arXiv:2008.10553}}.
	
	\bibitem[MMR22]{MMR22}
	Tilman Möller, Paul Mücksch, and Gerhard Roehrle.
	\newblock On formality and combinatorial formality for hyperplane arrangements,
	2022.
	\newblock available at
	\href{http://arxiv.org/abs/2202.09104}{\texttt{arXiv:2202.09104}}.
	
	\bibitem[OT92]{OT92}
	Peter Orlik and Hiroaki Terao.
	\newblock {\em Arrangements of hyperplanes}, volume 300 of {\em Grundlehren der
		mathematischen Wissenschaften}.
	\newblock Springer-Verlag, Berlin, 1992.
	
	\bibitem[Par95]{Par95}
	Luis Paris.
	\newblock Topology of factored arrangements of lines.
	\newblock {\em Proc. Amer. Math. Soc.}, 123(1):257--261, 1995.
	
	\bibitem[Shi86]{Shi}
	Jian~Yi Shi.
	\newblock {\em The {K}azhdan-{L}usztig cells in certain affine {W}eyl groups},
	volume 1179 of {\em Lecture Notes in Mathematics}.
	\newblock Springer-Verlag, Berlin, 1986.
	
	\bibitem[Sta72]{Sta72}
	Richard~P. Stanley.
	\newblock Supersolvable lattices.
	\newblock {\em Algebra Universalis}, 2:197--217, 1972.
	
	\bibitem[Ter80]{Terao_addition}
	Hiroaki Terao.
	\newblock Arrangements of hyperplanes and their freeness. {I}.
	\newblock {\em J. Fac. Sci. Univ. Tokyo Sect. IA Math.}, 27(2):293--312, 1980.
	
	\bibitem[Ter92]{Ter92}
	Hiroaki Terao.
	\newblock Factorizations of the {O}rlik-{S}olomon algebras.
	\newblock {\em Adv. Math.}, 91(1):45--53, 1992.
	
	\bibitem[Yos14]{Yos14}
	Masahiko Yoshinaga.
	\newblock Freeness of hyperplane arrangements and related topics.
	\newblock {\em Ann. Fac. Sci. Toulouse Math. (6)}, 23(2):483--512, 2014.
	
\end{thebibliography}
\end{document}